\theoremstyle{plain}
\newtheorem{thm}{Theorem}[section]
\newtheorem{lem}[thm]{Lemma}
\newtheorem{cor}[thm]{Corollary}
\newtheorem{prop}[thm]{Proposition}
\theoremstyle{definition}
\theoremstyle{remark}
\newtheorem{remark}[thm]{Remark}
\def \F {\mathbb{F}}
\def \Z {\mathbb{Z}}
\def \lcm {\mathrm{lcm}}
\def \tr {\operatorname{tr}}
\def \Frob {\operatorname{Frob}}
\newcommand*{\defeq}{\mathrel{\rlap{%
                     \raisebox{0.27ex}{$\m@th\cdot$}}%
                     \raisebox{-0.27ex}{$\m@th\cdot$}}%
                     =}
\newcommand*{\eqdef}{=\mathrel{\rlap{%
                     \raisebox{0.27ex}{$\m@th\cdot$}}%
                     \raisebox{-0.27ex}{$\m@th\cdot$}}%
                     }
\numberwithin{equation}{section}
\def\@setcopyright{}
\def\serieslogo@{}
\title{On the Chowla and twin primes conjectures over $\mathbb F_q[T]$}
\author{Will Sawin}\thanks{W.S. served as a Clay Research Fellow while working on this paper.}
\author{Mark Shusterman}
\address{Department of Mathematics, Columbia University, New York, NY 10027, USA}
\email{sawin@math.columbia.edu}
\address{Department of Mathematics, University of Wisconsin-Madison, 480 Lincoln Drive, Madison, WI 53706, USA}
\email{mshusterman@wisc.edu}
\begin{document}

\begin{abstract}

Using geometric methods, we improve on the function field version of the Burgess bound,
and show that, when restricted to certain special subspaces, the M\"{o}bius function over $\F_q[T]$ can be mimicked by Dirichlet characters.
Combining these, we obtain a level of distribution close to $1$ for the M\"{o}bius function in arithmetic progressions,
and resolve Chowla's $k$-point correlation conjecture with large uniformity in the shifts.
Using a function field variant of a result by Fouvry-Michel on exponential sums involving the M\"{o}bius function,
we obtain a level of distribution beyond $1/2$ for irreducible polynomials,
and establish the twin prime conjecture in a quantitative form. All these results hold for finite fields satisfying a simple condition.


\end{abstract}

\maketitle

\section{Introduction}

Our main results are the resolutions of two open problems in number theory, except with the ring of integers $\Z$ replaced by the ring of polynomials $\F_q[T]$, under suitable assumptions on $q$. 

We first fix some notation. Define the norm of a nonzero $f \in \F_q[T]$  to be
\begin{equation}
|f| = q^{\deg(f)} = |\F_q[T]/(f)|.
\end{equation}
The degree of the zero polynomial is negative $\infty$, so we set its norm to be $0$. 

\subsection{The main result - twin primes}

Our main result covers the twin prime conjecture in its quantitative form. The latter is the $2$-point prime tuple conjecture of Hardy-Littlewood, predicting for a nonzero integer $h$ that
\begin{equation}
\#\{X \leq n \leq 2X : n \ \text{and} \ n+h \ \text{are prime}\}  \sim \mathfrak{S}(h)\frac{X}{\log^2(X)}, \quad X \to \infty,
\end{equation}
where 
\begin{equation}
\mathfrak{S}(h) = \prod_p (1 - p^{-1})^{-2}(1 - p^{-1} - p^{-1} \mathbf 1_{p \nmid h}),
\end{equation}
with $\mathbf 1_{p \nmid h}$ equals $1$ if $h$ is not divisible by $p$, and $0$ otherwise.

For the function field analogue, we set
\begin{equation}
\mathfrak{S}_q(h) = \prod_P \left(1 - |P|^{-1} \right)^{-2} \left( 1 - |P|^{-1} - |P|^{-1} \mathbf 1_{P \nmid h} \right)
\end{equation}
where $q$ is a prime power, $P$ ranges over all primes (monic irreducibles) of $\F_q[T]$, and $h \in \F_q[T]$ is nonzero.

\begin{thm}\label{TwinPrimesRes}

For an odd prime number $p$, and a power $q$ of $p$ satisfying $q > 685090p^2$, the following holds.
For any nonzero $h \in \F_q[T]$ we have
\begin{equation}
\#\{f \in \F_q[T]: |f| = X, f \ \text{and} \ f+h \ \text{are prime}\} \sim \mathfrak{S}_q(h)\frac{X}{\log_q^2(X)}
\end{equation}
as $X \to \infty$ through powers of $q$.
Moreover, we have a power saving (depending on $q$) in the asymptotic above.

\end{thm}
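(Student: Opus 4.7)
The plan is to bootstrap the two unconditional inputs advertised in the abstract---namely (i) a level of distribution strictly larger than $1/2$ for the von Mangoldt function in arithmetic progressions, obtained via the function field Fouvry--Michel input, and (ii) the improved Burgess bound together with the M\"{o}bius-to-character approximation on special subspaces---into an asymptotic evaluation of
\[
S_h(X) \defeq \sum_{|f|=X} \Lambda(f)\Lambda(f+h),
\]
from which \Cref{TwinPrimesRes} will follow after removing the negligible contribution of prime powers.

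First I would apply a Heath-Brown / Vaughan-type combinatorial identity in $\F_q[T]$ to expand the inner $\Lambda(f)$, splitting $S_h(X)$ into a small number of sums of two flavors. The Type I sums are of the shape $\sum_{d} c(d)\sum_{d\mid f,\,|f|=X}\Lambda(f+h)$, with $\deg d$ ranging over an initial segment and $c(d)$ a bounded divisor-like coefficient; the inner sum counts primes $\equiv h \pmod{d}$ of fixed degree. Using input (i), these are evaluated asymptotically up to level well beyond $X^{1/2}$, and the main terms assemble, after a local Euler-product computation, into the expected $\mathfrak{S}_q(h)\cdot X/\log_q^2 X$.

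The remaining pieces are bilinear of the form
\[
T(A,B)=\sum_{|a|=A}\sum_{|b|=B}\alpha(a)\beta(b)\,\Lambda(ab+h),
\]
with $\alpha=\mu$ or $\Lambda$, a bounded coefficient $\beta$, and $A,B\geq X^{\eta}$ for some fixed $\eta>0$ calibrated by the identity above. Here I would apply Cauchy-Schwarz in $b$ and expand $\Lambda$ against Dirichlet characters modulo $ab+h$; this reduces matters to a second moment over $a$ of short character sums twisted by $\mu$ along a one-parameter geometric family of moduli, which is precisely the regime addressed by the paper's M\"{o}bius-mimicked-by-characters result together with its improved function field Burgess bound. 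These deliver a power saving uniform in the conductor, and the hypothesis $q>685090\,p^2$ is exactly what lets the geometric Burgess inequality beat the trivial bound at the relevant scale, for $p$ odd.

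The genuinely hard step is the Type II estimate at the critical balance $A\approx B\approx X^{1/2}$, where both halves of the input must cooperate: one needs power-saving cancellation of $\mu$ on subspaces of large codimension, and a Burgess-type bound for character sums over short intervals that survives summation over a moving family of conductors. Everything else---the Vaughan-type decomposition, the passage from $\Lambda$ to prime indicators, control of prime-power contributions, and the rearrangement of local factors into $\mathfrak{S}_q(h)$---is, by function-field standards, essentially routine bookkeeping, and letting $n\to\infty$ with $X=q^n$ then yields the stated asymptotic with a power saving depending on $q$.
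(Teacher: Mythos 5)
Your overall architecture (expand one factor of $\Lambda$ combinatorially, handle one range by a level-of-distribution input, and handle the other range by the M\"{o}bius-cancellation input) matches the paper's skeleton, but the two halves diverge in ways that matter, and one of them has a genuine gap.

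First, a structural difference: the paper does not use a Heath--Brown or Vaughan identity at this stage. It uses the bare convolution $\Lambda = -1 * (\mu \cdot \deg)$ applied to one of the two $\Lambda$'s, yielding
\[
\sum_{f\in\mathcal M_d}\Lambda(f)\Lambda(f+a) = -\sum_{k=0}^{d} k \sum_{M\in\mathcal M_k}\mu(M)\sum_{N\in\mathcal M_{d-k}}\Lambda(a+NM),
\]
with a single dichotomy on $k$ (degree of the M\"{o}bius variable) rather than a Type~I/Type~II split. The range $k\le d/(2-\omega)$ is evaluated by the level-of-distribution estimate \cref{PrimesLODCor} (whose proof is the only place a Vaughan-type identity actually enters, inside the appendix's Fouvry--Michel argument), and the local sums assemble into $\mathfrak S_q(a)$ via \cref{SingSirProp}. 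So far this is close in spirit to your Type~I discussion.

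The genuine gap is in your Type~II step. You propose to Cauchy--Schwarz in $b$ and then ``expand $\Lambda$ against Dirichlet characters modulo $ab+h$.'' Two problems. (1) The modulus $ab+h$ is the argument of $\Lambda$ itself, so there is no arithmetic progression to detect and no well-defined character expansion of $\Lambda(ab+h)$ of this form. (2) More fundamentally, applying Cauchy--Schwarz in $b$ destroys precisely the sign information in $\alpha(a)=\mu(a)$ that is needed to overcome the parity barrier: after squaring, the $\mu$-weights become $|\mu|^2$ and the resulting second moment is insensitive to the parity of the number of prime factors, which is the entire obstruction in the twin-prime problem. A Burgess-type saving applied after such a Cauchy--Schwarz cannot recover this. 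The paper instead treats the large-$k$ range directly, without any moment method, by swapping the order of summation and invoking \cref{MobPrimCor} on
\[
\sum_{M\in\mathcal M_k}\Lambda(a+MN)\,\mu(M),
\]
which is a Chowla-type correlation between $\mu$ and $\Lambda$ in a progression. The parity barrier is removed \emph{inside} \cref{MobPrimCor}: that corollary (and \cref{LinearFormsMobThm} feeding it) exploit the identity $\mu(r+s^p)\approx \chi(s+c)$ via \cref{PreparingMobiusToArithProgLem}, fibering the sum over classes $g=r+s^p$ and reducing to short character sums controlled by \cref{FFCSCor}. Your proposal mentions these tools but slots them into a pipeline (Cauchy--Schwarz then character moment) into which they do not fit; the M\"{o}bius-to-character identity must be applied to the linear (unsquared) sum. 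Without that change, the proposed Type~II estimate does not close.
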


For example, the $2$-point Hardy-Littlewood conjecture holds over
\begin{equation}
\F_{3^{15}}, \F_{5^{11}}, \F_{7^9}, \F_{11^8}, \F_{685093^3}.
\end{equation}

In case $h \in \F_q[T]$ is a monomial, 
the fact that the count above tends to $\infty$ as $X \to \infty$ (under the weaker assumption $q>105$) has been proved in \cite[Theorem 1.3]{CHLPT15} using an idea of Entin. Their work builds on the recent dramatic progress on this problem over the integers, particularly \cite{Ma15}. The strongest result known over the integers is \cite[Theorem 16(i)]{PM14}, which says that for any `admissible tuple' of 50 integers, there exists at least one difference $h$ between two elements in the tuple such that there are infinitely many pairs of primes separated by $h$. 

\begin{remark}

Our proof of \cref{TwinPrimesRes} establishes also the analog of the Goldbach problem over function fields, and can be modified to treat more general linear forms in the primes.

\end{remark}

The proof of \cref{TwinPrimesRes} passes through some intermediate results which may be of independent interest. We will discuss these results in the remainder of the introduction. Once \cref{ThirdRes} and \cref{FourthRes} below are established, \cref{TwinPrimesRes} will follow from a quick argument involving a convolution identity relating the von Mangoldt function, which can be used to count primes, to the M\"{o}bius function.

\subsection{ The key ingredient - Chowla's conjecture}

The main ingredient in the proof of \cref{TwinPrimesRes} is the removal of the `parity barrier'.
More precisely, we confirm Chowla's $k$-point correlation conjecture over $\F_q[T]$ for some prime powers $q$. 
Over the integers, this conjecture predicts that
for any fixed distinct integers $h_1, \dots, h_k$, one has
\begin{equation}
\sum_{n \leq X} \mu(n+h_1) \mu(n+ h_2) \cdots \mu(n + h_k) = o(X), \quad X \to \infty.
\end{equation}
The only completely resolved case is $k=1$ which is essentially equivalent to the prime number theorem.

For the function field analogue, we recall that the M\"{o}bius function of a monic polynomial $f$ is given by
\begin{equation}
\mu(f) =
\begin{cases}
0, \quad \# \{P : P^2 \mid f\} > 0 \\
1, \quad \# \{P : P \mid f\} \equiv 0 \ \mathrm{mod} \ 2 \\
-1, \ \# \{P : P \mid f\} \equiv 1 \ \mathrm{mod} \ 2,
\end{cases}
\end{equation}
and denote by $\F_q[T]^{+}$ the set of monic polynomials over $\F_q$.

\begin{thm} \label{SecRes}

For an odd prime number $p$, an integer $k \geq 1$, and a power $q$ of $p$ satisfying $q > p^2 k^2 e^2 $, the following holds. 
For distinct $h_1, \dots, h_k \in \F_q[T]$ we have
\begin{equation}\label{SecResEq}
\sum_{\substack{ f \in \F_q[T]^{+} \\ |f| \leq X}} \mu(f + h_1) \mu(f + h_2) \cdots \mu(f + h_k) = o(X), \quad X \to \infty.
\end{equation}
\end{thm}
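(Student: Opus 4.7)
The plan is to realise the correlation sum as a trace of Frobenius on étale cohomology via the identity $\mu(g)=(-1)^{\deg g}\,\mathrm{sgn}(\sigma_g)$, valid for squarefree $g$, where $\sigma_g$ is the Frobenius permutation of the roots of $g$. Cancellation in \eqref{SecResEq} will then come from a ``big monodromy'' statement combined with Deligne's Weil~II.

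Write $S_n:=\sum_{\deg f=n}\prod_{i=1}^k\mu(f+h_i)$; the contribution from $f$ with some $f+h_i$ non-squarefree is $O_k(q^{n-1})$, so it suffices to prove $|S_n|\le C(k,q)^n\,q^{n-\delta}$ for some $\delta=\delta(k,q)>0$ and sum over $n\le\log_q X$. On the open $U_n\subset\mathbb{A}^n_{\F_q}$ parameterising coefficients of monic $f$ of degree $n$ with $\prod_i(f+h_i)$ squarefree, I form the finite étale Galois cover $V_n\to U_n$ whose fibre at $f$ is the set of orderings of the roots of each $f+h_i$; the geometric Galois group $G_n$ embeds in $S_n^k$. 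The sign character $\chi\colon S_n^k\to\{\pm1\}$, $(\sigma_1,\dots,\sigma_k)\mapsto\prod_i\mathrm{sgn}(\sigma_i)$, pulls back to a rank-one $\overline{\mathbb{Q}}_\ell$ local system $\mathcal{L}_\chi$ on $U_n$ whose trace function at an $\F_q$-point $f$ is $(-1)^{kn}\prod_i\mu(f+h_i)$. Grothendieck--Lefschetz then gives
\[
S_n \;=\; (-1)^{kn}\sum_{j=0}^{2n}(-1)^j\,\tr\!\bigl(\Frob_q\mid H^j_c(U_{n,\overline{\F}_q},\mathcal{L}_\chi)\bigr)\;+\;O_k(q^{n-1}).
\]

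The main obstacle is to establish that $\chi|_{G_n}$ is nontrivial, and ideally that $G_n=S_n^k$. For a single cover ``roots of $f+h_i$'' over $\mathbb{A}^n$, tame inertia at the discriminant locus (available since $p$ is odd) furnishes simple transpositions, and these generate $S_n$. For the joint cover, Goursat's lemma combined with the simplicity of $A_n$ (for $n\ge 5$) reduces the problem to showing that no two factor covers become isomorphic after pullback; I would do this by exhibiting a specialisation at which $f+h_i$ and $f+h_j$ have distinct splitting types, which the distinctness of the $h_i$ makes possible. The hypothesis $q>p^2k^2e^2$ plausibly enters here, both to guarantee enough $\F_q$-rational specialisations for these arguments and to control wild ramification that can appear when $p$ is small relative to $n$ or $k$; this is by far the most technical step.

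Once big monodromy is in hand, the top cohomology $H^{2n}_c(U_n,\mathcal{L}_\chi)$, equal to $q^n$ times the $G_n$-coinvariants of $\chi$, vanishes. Deligne's purity bound yields $|\tr(\Frob_q\mid H^j_c)|\le q^{j/2}\dim H^j_c$ for $j<2n$, and a Katz-style bound on the total compactly supported Betti number of $U_n$ (the discriminant divisor has degree linear in $n$, so $\sum_j\dim H^j_c\le C(k)^n$) gives $|S_n|\ll C(k,q)^n\,q^{n-1/2}$. Summing over $n\le\log_q X$ yields the desired $o(X)$ bound, with a power saving in $X$.
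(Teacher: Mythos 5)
Your proposal recovers exactly the ``large finite field'' approach (Carmon--Rudnick, Carmon, Gorodetsky--Sawin), which the paper explicitly flags as prior work in a different regime, and it does not prove the theorem as stated. The obstruction is in your last paragraph. You aim for a bound of the shape $|S_n|\ll C(k)^{n}\,q^{n-1/2}$, where $S_n$ is the degree-$n$ slice of the correlation sum and $C(k)^{n}$ majorizes the total compactly supported Betti number of the $n$-dimensional space $U_n$ with coefficients in $\mathcal{L}_\chi$. Writing $X=q^N$ and summing over $n\le N$, that estimate gives roughly $C(k)^{N}\,q^{N-1/2}=X^{\,1+\log_q C(k)}\,q^{-1/2}$, which is \emph{not} $o(X)$ for fixed $q$ once $C(k)>1$ (and the Katz--Bombieri style Betti number bounds you invoke do grow at least exponentially in $n$, since the discriminant locus has degree linear in $n$ inside an $n$-dimensional affine space). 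Square-root cancellation per degree simply cannot beat exponential-in-degree Betti growth when $q$ is held fixed. This is precisely why the hypothesis on $q$ in the theorem cannot be explained as ensuring enough rational specializations or controlling wild ramification, as you suggest: it is there to make a growth rate of the form ``(combinatorial constant)$^{n}$'' lose to ``$q^{n/2}$.''

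What the paper does differently, and what is genuinely missing from your argument, is the exploitation of the characteristic-$p$ derivative structure. The observation is that for $r$ fixed, the discriminant of $r+s^{p}$ depends on $s$ only through $\operatorname{Res}(r+s^p, r')$, since $(r+s^p)'=r'$; Pellet's formula then turns $\mu(r+s^p)$ into (a sign times) a quadratic Dirichlet character evaluated at a shift of $s$, with modulus determined by $r$ alone (\cref{PreparingMobiusToArithProgLem}). Restricting the sum to the one-parameter family $f=r+s^p$ with $r$ fixed converts $\prod_i\mu(f+h_i)$ into a single Dirichlet character of $s$ over a short interval, and \cref{main-character-sum} and \cref{FFCSCor} give a character-sum bound of the form $(q^{1/2}+1)\binom{m-1}{t}q^{t/2}$ in which the combinatorial factor $\binom{m-1}{t}$ is controlled \emph{relative} to the length $t$ of the interval; the hypothesis $q>(e/\eta)^2$ (and ultimately $q>p^2k^2e^2$) is exactly the condition under which $\binom{m-1}{t}\le (e/\eta)^{t}$ is beaten by $q^{t(1/2-\beta)}$. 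Replacing the global $n$-dimensional monodromy computation by a family of one-dimensional character sums is what tames the Betti numbers. Without this reduction, or some substitute for it, the cohomological bookkeeping in your final step does not close.
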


For instance, the $2$-point Chowla conjecture holds over
\begin{equation}
\mathbb F_{3^6}, \mathbb F_{5^5}, \mathbb F_{7^4} , \mathbb F_{31^3}.
\end{equation}

In fact, in \cref{SecRes} we obtain a power saving inversely proportional to $p$,
and the shifts $h_1, \dots, h_k$ can be as large as any fixed power of $X$ (the corresponding assumption on $q$ becomes stronger as this power grows larger). 
In \cref{MobPrimCor} we also get cancellation in case the sum is restricted to prime polynomials $f$.

Over the integers, the $k=2$ case of the Chowla conjecture, with logarithmic averaging, was proven by Tao \cite[Theorem 3]{Tao16}, building on earlier breakthrough work of Matom\"{a}ki and Radziwi\l\l~\cite{MR16}. The $k$ odd case, again with logarithmic averaging, was handled by Tao and Ter\"{a}v\"{a}inen \cite{TT17}. Generalizations of some of these arguments to the function field setting are part of a work in progress by Klurman, Mangerel, and Ter\"{a}v\"{a}inen.

In contrast to these works, which deal with any sufficiently general (i.e. non-pretentious) multiplicative function, our result relies on special properties of the M\"{o}bius function (in positive characteristic). Specifically, we observe that for any fixed polynomial $r$, the function $\mu(r + s^p)$ essentially equals $\chi_{D_r} (s+ c_r)$ where $\chi_{D_r}$ is a quadratic Dirichlet character and $c_r$ is a shift, both depending only on $r$ (and not on $s$). 
This observation is very closely related to the properties of M\"{o}bius described in \cite{CCG}, specifically \cite[Theorem 4.8]{CCG}. Conrad, Conrad, and Gross prove a certain quasiperiodicity property in $s$ for a general class of expressions of the form $r+s^p$, while we give a more precise description via Dirichlet characters in a special case. 

Our observation on $\mu(r+s^p)$ arises from the connection between the parity of the number of prime factors of a squarefree $f \in \F_q[T]$, and the sign (inside the symmetric group) of the Frobenius automorphism acting on the roots of $f$.
In odd characteristic, 
this sign is determined by the value of the quadratic character of $\F_q^\times$ on the discriminant of $f$, i.e. the resultant of $f$ and its derivative $f'$. In characteristic $p$, the derivative of $f = r+ s^p$ is equal to the derivative of $r$, so the aforementioned sign of Frobenius is determined by the quadratic character of the resultant of $f$ with the fixed polynomial $r'$.
The latter is a quadratic Dirichlet character of $f$, and thus equals an additively shifted Dirichlet character of $s$.

To use our observation, we restrict the sum in \cref{SecRes} to $f$ of the form $r+s^p$ for any fixed $r$, and obtain a short sum in $s$ of a product of additively shifted Dirichlet characters. 
As the conductors of these characters are typically essentially coprime, 
we arrive at a short sum of a single Dirichlet character.

Typically in analytic number theory, short character sums are handled by the method of Burgess, who showed in \cite{Bur63} that for a real number $\eta > 1/4$, a squarefree integer $M$, 
a real number $X \geq |M|^{\eta}$, 
and a nonprincipal Dirichlet character $\chi$ mod $M$, one has 
\begin{equation}
\sup_{s \in \Z} \left| \sum_{|a| \leq X} \chi(s + a) \right| = o(X), \quad \ |M| \to \infty.
\end{equation}
Refining the method of Burgess is the focus of several works, 
but the exponent $1/4$ has not yet been improved (even conditionally).  However, in the function field setting, we can do better by a geometric method, as long as $q$ is sufficiently large.

\begin{thm} \label{FirstRes}

Fix $\eta > 0$.
Then for a prime power $q > e^2/\eta^2  $  the following holds.
For a squarefree $M \in \F_q[T]$,
a real number $X \geq |M|^\eta$, 
and a nonprincipal Dirichlet character $\chi$ mod $M$,
we have
\begin{equation}
\sup_{s \in \F_q[T]} \left| \sum_{|a| \leq X} \chi(s + a) \right| = o(X), \quad \ |M| \to \infty.
\end{equation}

\end{thm}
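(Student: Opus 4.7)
The plan is to interpret the character sum cohomologically as a Frobenius trace on $\ell$-adic compact cohomology, exploiting the crucial fact that over $\F_q[T]$ the ``interval'' $\{a \in \F_q[T] : |a|\leq X\}$ is literally an $\F_q$-vector space of dimension $n+1$ (with $X = q^n$), not merely a set of cardinality $X$. The $n+1$ independent linear directions give Deligne's Riemann Hypothesis much more room than Burgess's classical one-variable amplification does, and this is what should enable breaking through the $\eta > 1/4$ barrier in the function field setting (at the price of requiring $q$ large in terms of $\eta$).

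Writing $p_\alpha = \alpha_0 + \alpha_1 T + \cdots + \alpha_n T^n$, the sum becomes $\sum_{\alpha \in \mathbb A^{n+1}(\F_q)} \chi(s + p_\alpha)$. I would realize the integrand as the trace function of a rank-one lisse $\ell$-adic sheaf $\mathcal F$ on the open locus $U \subset \mathbb A^{n+1}$ where $s + p_\alpha$ is coprime to $M$, obtained by pulling back the rank-one sheaf on $\mathbb A^1 \setminus V(M)$ attached to $\chi$ via geometric class field theory over $\F_q(T)$. Grothendieck--Lefschetz then rewrites the sum as $\sum_i (-1)^i \tr(\Frob \mid H^i_c(U_{\overline{\F_q}},\mathcal F))$. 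Since $\chi$ is nonprincipal and $M$ squarefree, $\mathcal F$ is genuinely ramified along each hyperplane family $\{s + p_\alpha \equiv 0 \bmod P\}$, $P\mid M$; together with Artin vanishing on the affine $U$, this should concentrate the cohomology in the middle degree and, via Deligne's Riemann Hypothesis, bound the sum by $\dim H^{n+1}_c(U_{\overline{\F_q}},\mathcal F) \cdot q^{(n+1)/2}$.

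The heart of the argument, and the step I expect to be the principal obstacle, is the sharp dimension estimate
\[
\dim H^{n+1}_c(U_{\overline{\F_q}}, \mathcal F) \;\leq\; \Bigl(\tfrac{e\,\deg M}{n+1}\Bigr)^{n+1}.
\]
I would attack this through a Bombieri--Katz style bound on the Euler characteristic of a rank-one sheaf on affine space in terms of the degrees of its ramification data. The exponential rate $e\,\deg M/(n+1)$ is the natural shape: the factor $e$ is a Stirling-type constant for counting monomials of bounded degree, while $\deg M/(n+1)$ measures the ramification density per affine coordinate. Nailing down the constants exactly, rather than accepting any polynomial loss, is what produces the sharp threshold $q > e^2/\eta^2$.

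Granting that estimate, the arithmetic conclusion is immediate. Using $n + 1 \geq \eta \deg M$,
\[
\Bigl|\sum_{|a|\leq X}\chi(s+a)\Bigr| \;\leq\; \Bigl(\tfrac{e}{\eta}\Bigr)^{n+1} q^{(n+1)/2} \;=\; X\cdot\Bigl(\tfrac{e}{\eta}\Bigr)^2\cdot\Bigl(\tfrac{e^2}{\eta^2 q}\Bigr)^{(n-1)/2},
\]
which is $o(X)$ as $|M|\to\infty$ (forcing $n\to\infty$) exactly when $q > e^2/\eta^2$, uniformly in $s$ as required.
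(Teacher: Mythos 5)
Your high-level strategy matches the paper's: realize the sum as a Frobenius trace on compactly supported $\ell$-adic cohomology of a rank-one character sheaf, invoke Grothendieck--Lefschetz and Deligne's Riemann Hypothesis, and convert a Betti number bound of Stirling shape $(e\,\deg M/t)^t$ into the threshold $q > e^2/\eta^2$. This is also what the paper does (Theorem 2.1 and Corollary 2.6). However, the step you flag as "the principal obstacle" is not merely unfinished --- the justification you offer for it would not work, and the actual argument is structurally different.

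The claim that "Artin vanishing on the affine $U$" plus "genuine ramification along each hyperplane" concentrates $H^\bullet_c(U,\mathcal F)$ in the middle degree $n+1$ is false as stated. For an affine $U$ of dimension $n+1$, Artin's theorem gives $H^i_c(U,\mathcal F)=0$ for $i<n+1$, and nontrivial monodromy kills the top group $H^{2(n+1)}_c$. But the groups $H^i_c$ for $n+1<i<2(n+1)$ can be large, and nothing in your sketch rules them out. (Concretely: the complement of $U$ in $\mathbb P^{n+1}$ is a union of $\deg M$ hyperplanes plus the hyperplane at infinity, and the cohomology of $j_!\mathcal F$ on $\mathbb P^{n+1}$ is sensitive to how badly these hyperplanes fail to be in general position. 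They do fail: whenever more than $n+1$ of the hyperplanes $\{s + p_\alpha \equiv 0 \bmod P\}$ pass through a common point, you lose simple normal crossings, and extra cohomology can and does appear.) In the paper's analysis, the relevant complex is supported in \emph{two} degrees $t$ and $t+1$ (not one), and the Frobenius weight in degree $t+1$ is in fact what dominates the final bound $(q^{1/2}+1)\binom{m-1}{t}q^{t/2}$.

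What the paper actually does to control those Betti numbers is introduce an auxiliary two-parameter family over $\mathbb A^2$, deform the sum to the fiber $(c_1,c_2)=(0,1)$ where it becomes a classical $L$-function (the cohomology of which is known exactly, supported in degree $t$ with rank $\binom{m-1}{t}$, by the symmetric power / K\"unneth computation in Lemma 2.3), and then use the vanishing cycles exact sequence to transport this back to the fiber $(1,0)$ of interest. The SNC analysis of Lemma 2.4 identifies the locus where vanishing cycles can appear; Lemma 2.6 shows they are perverse, hence skyscrapers, and computes their ranks by an Euler characteristic argument. This machinery --- the deformation to the $L$-function fiber and the explicit vanishing-cycles computation --- is the actual substitute for the Euler-characteristic-only bound you propose, and it is needed precisely because a raw Euler characteristic would not distinguish contributions in different cohomological degrees, which carry different Frobenius weights. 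So the gap is not a missing constant; it is that your route to the Betti number estimate would stall, and a genuinely different mechanism is required.
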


By further enlarging $q$, we get arbitrarily close to square root cancellation. 
This is stated more precisely in \cref{FFCSCor}.

To prove \cref{FirstRes},
we express the problem geometrically, viewing the short interval $\{s+a : |a| \leq X \}$ as an affine space over $\mathbb F_q$, and the character $\chi$ as arising from a sheaf on that space. Following a strategy from \cite[appendix by Katz]{Hoo91}, we use vanishing cycles theory to compare the cohomology of this sheaf for the $s = 0$ short interval and its cohomology for a general short interval. 
Vanishing cycles can only occur when the vanishing locus of $\chi$ is not (geometrically) a simple normal crossings divisor. 
Arguing as in \cite{Kat89}, we split the modulus $M$ of $\chi$ into a product of distinct linear terms over $\overline{\F_q}$, 
which makes our vanishing locus a union of the hyperplanes where the linear terms vanish, so we can check that this is a simple normal crossings divisor away from some isolated points. This implies that the cohomology groups vanish until almost the middle degree.
Since we can precisely calculate the vanishing cycles at the isolated points, we get a very good control of the dimensions of cohomology groups as well.

\begin{remark}
The relation between M\"{o}bius and multiplicative characters is less powerful the larger $p$ is,
as then fewer polynomials share a given derivative.
On the other hand, our geometric character sum bounds become stronger as $q$ grows. 
Thus, to make this method of proving Chowla work, we need $q$ to be sufficiently large with respect to $p$.
\end{remark}

\begin{remark}
The  study of the statistics of polynomial factorizations by examining Frobenius as an element of the symmetric group has been very fruitful in the `large finite field limit',
where (in the notation of \cref{SecRes}) $X$ is kept fixed and $q$ is allowed to grow.
We refer to \cite{CaRu14}, \cite{Ca15}, \cite{GS18}  (and references therein) for the large finite field analogs of \cref{SecRes} which save a fixed power of $q$.
Our methods likely give an improved savings in the large finite field limit when the characteristic is fixed, as long as the degrees of the polynomials are sufficiently large with respect to the characteristic, but we have not carefully calculated the resulting bounds in this range.
\end{remark}


\subsection{Further ingredients - level of distribution estimates} 

Another ingredient in the proof of \cref{TwinPrimesRes} is an improvement of the level of distribution of the M\"{o}bius and von Mangoldt functions in arithmetic progressions.
Over the integers, 
assuming the Generalized Riemann Hypothesis (GRH), 
this level of distribution is (at least) $1/2$, which means that
\begin{equation}
\sum_{\substack{n \leq X \\ n \equiv a \ \mathrm{mod} \ M}} \mu(n) = o \left(\frac{X}{|M|}\right), 
\quad \sum_{\substack{n \leq X \\ n \equiv a \ \mathrm{mod} \ M}} \Lambda(n) = \frac{X}{\varphi(M)} + o \left(\frac{X}{|M|}\right)
\end{equation}
where $M, a$ are coprime integers, and $|M| \leq X^{\frac{1}{2} - \epsilon}$ (for any fixed $\epsilon > 0$ and $A \in \mathbb R$).
For M\"{o}bius over $\F_q[T]$, we obtain a level of distribution close to $1$.

\begin{thm} \label{ThirdRes}

Fix $\eta > 0$.
For an odd prime number $p$, and a power $q$ of $p$ with $q >p^2 e^2 \left(\frac{2}{\eta} -1\right)^2  $, the following holds.
For coprime $M,a \in \F_q[T]$, and a real number $X$ with $X^{1-\eta} \geq |M|$ we have
\begin{equation}
\sum_{\substack{f \in \F_q[T]^{+} \\ |f| \leq X \\ f \equiv a \ \mathrm{mod} \ M}} \mu(f) = o\left(\frac{X}{|M|}\right), 
\quad |M| \to \infty.
\end{equation}

\end{thm}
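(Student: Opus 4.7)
The plan is to follow the approach of the Chowla proof (\cref{SecRes}) with an extra character twist. The first step is the standard reduction via orthogonality of Dirichlet characters modulo $M$:
\begin{equation*}
\sum_{\substack{|f| \leq X \\ f \equiv a \bmod M}} \mu(f) = \frac{1}{\varphi(M)} \sum_{\chi \bmod M} \bar\chi(a) \sum_{\substack{|f| \leq X \\ (f,M)=1}} \mu(f) \chi(f).
\end{equation*}
The principal character contributes $O(X^{1/2}/|M|)$ by the Weil bound for $L$-functions over $\F_q[T]$. The core task is then to estimate the twisted M\"obius sums $S(\chi) := \sum_{|f|\leq X} \mu(f) \chi(f)$ for nonprincipal $\chi$ mod $M$ with sufficient cancellation that, after the $1/\varphi(M)$ factor, the averaged contribution is $o(X/|M|)$.

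For each $n \leq \log_q X$, I would use the Frobenius decomposition: each monic $f$ of degree $n$ is written uniquely as $f = r + s^p$, where $s$ is monic of degree $n/p$ (when $p \mid n$) recording the coefficients of $f$ at $p$-divisible positions, and $r$ captures the rest. The key M\"obius-to-character identity $\mu(r + s^p) = \chi_{D_r}(s + c_r)$, with $\chi_{D_r}$ a quadratic Dirichlet character in $s$ of modulus dividing the radical of $r'$ (so of degree $< n$), converts the degree-$n$ slice of $S(\chi)$ into
\begin{equation*}
\sum_r \sum_s \chi_{D_r}(s + c_r)\, \chi(r + s^p),
\end{equation*}
up to a squarefree-sieving error. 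For fixed $r$, the inner sum in $s$ has length $q^{n/p}$ and its summand is the trace of a rank-one sheaf on $\mathbb A^1_s$ obtained by tensoring the (shifted) Kummer sheaf of $\chi_{D_r}$ with the pullback of $\mathcal L_\chi$ along $s \mapsto s^p + r$. This combined sheaf has conductor of degree at most $\deg M + \deg r' \leq (2-\eta) n$. Applying \cref{FirstRes} with parameter $\eta' = \eta / (p(2-\eta))$ turns its hypothesis $q > e^2/(\eta')^2$ into precisely $q > p^2 e^2 (2/\eta - 1)^2$, matching the theorem.

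The principal obstacle is quantitative: the triangle inequality over the roughly $X^{1-1/p}$ values of $r$ gives only $o(X)$ for $S(\chi)$, whereas $o(X^\eta)$ is required per character. Closing this gap will use the power-saving (near square-root) strengthening of \cref{FirstRes} stated in \cref{FFCSCor}, combined with a Cauchy--Schwarz in the $r$-variable; the resulting second-moment complete sum is a trace on a higher-dimensional variety, amenable to the same vanishing-cycles analysis as in the proof of \cref{FirstRes}. Exceptional $r$ at which the combined sheaf degenerates --- either because $\chi_{D_r}$ and the pullback of $\chi$ share common conductor, or because $f = r + s^p$ fails to be squarefree for most $s$ --- form a sparse set handled by trivial or sieve bounds and contribute negligibly.
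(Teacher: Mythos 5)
Your approach is genuinely different from the paper's, and it runs into a fundamental quantitative obstruction that the paper's direct method avoids.

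Your first step — orthogonality of Dirichlet characters mod $M$ — converts the progression sum into an average $\frac{1}{\varphi(M)}\sum_{\chi \bmod M}\bar\chi(a)S(\chi)$ of twisted M\"obius sums $S(\chi)=\sum_{|f|\leq X}\mu(f)\chi(f)$ of length $X$. Even if one could achieve the strongest saving that \cref{FFCSCor} can possibly provide in the $s$-sums, the Frobenius decomposition $f=r+s^p$ has $\approx X^{1-1/p}$ choices of $r$, and the inner sum over $s$ has length only $X^{1/p}$; with $\beta<1/2$ in \cref{FFCSCor}, the triangle inequality gives $|S(\chi)|\gg X^{1-\beta/p}\geq X^{1-1/(2p)}$. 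After averaging over the $\varphi(M)$ characters, the orthogonality bound is still $\gg X^{1-1/(2p)}$, which must be compared to the target $o(X/|M|)=o(X^{\eta})$. For $\eta<1-1/(2p)$ (which is most of the range the theorem covers) this is not $o(X/|M|)$; in fact, one would need $\beta>p(1-\eta)$, which for small $\eta$ and $p\geq 3$ is far beyond $1/2$. Your own observation that ``the triangle inequality gives only $o(X)$'' underestimates the gap: it gives $X^{1-1/(2p)}$, and the required bound is $X^\eta$ with $\eta$ close to $0$. No amount of near-square-root cancellation in the individual $s$-sums can close this, and the proposed Cauchy--Schwarz/second-moment step has not been shown to save the needed extra factor of roughly $X^{1-\eta-1/(2p)}$ over the triangle inequality.

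The paper sidesteps orthogonality entirely. Its proof of \cref{ThirdRes} is the $n=1$ case of \cref{LinearFormsMobThm} (via \cref{MobThmLittleO} with $\epsilon=\eta/(1-\eta)$, $\delta\to\infty$, giving exactly the condition $q>p^2e^2(2/\eta-1)^2$). The key is \cref{PreparingMobiusToArithProgLem}: it directly converts $\mu(a+gM)$, with $g\in\mathcal M_d$ and $q^d\approx X/|M|$, into a single Dirichlet character $\chi_{a,M,g'}(w+g)$ whose conductor depends only on $g'$, $a$, $M$. Writing $g=r+s^p$ then freezes the conductor for each $r$, so one obtains $q^{d(1-1/p)}$ short character sums each of length $q^{d/p}$ and conductor of degree $\leq d+2m$; \cref{FFCSCor} applies with the same ratio $\eta'=\eta/(p(2-\eta))$ that you computed, but now the summand count is $q^d=X/|M|$, not $X$, and the saving $q^{-\beta d/p}$ is a power of $X/|M|$, which immediately gives $o(X/|M|)$ for any $\beta>0$. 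In short: your orthogonality step is the fatal move; it inflates the sum from $X/|M|$ terms to $X$ terms and cannot be undone by the $1/\varphi(M)$ prefactor.
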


As in the previous theorems, we obtain a power savings estimate.
Here however (as opposed to \cref{SecRes}), every $f$ in our sum may have a different derivative,
so a somewhat more elaborate implementation of our observation on the M\"{o}bius function is required.
We put $f =Mg + a$, and wishfully write
\begin{equation}
\mu(Mg + a) \approx \mu(M) \mu \left( g + \frac{a}{M} \right)
\end{equation}
in order to create coincidences among the derivatives of the inputs to the M\"{o}bius function.
This is carried out more formally in \cref{PreparingMobiusToArithProgLem},
where we show that for a power $q$ of an odd prime $p$, 
and coprime $a, M \in \mathbb F_q[T]$, the function $s \mapsto \mu( a + s^p M)$ is essentially proportional to an additive shift of a (quadratic) Dirichlet characters in $s$, with the modulus of the character depending on $a$ and $M$ in an explicit way. 
To visualize the power of this claim, we view $\mathbb F_q[T]$ as a rank $p$ lattice over its subring $\mathbb F_q[T^p]$. Restricting the M\"{o}bius function to any line in this lattice gives a Dirichlet character whose modulus varies with the line.

%
%
%
%
%

\vspace{10pt}

In order to deduce from \cref{ThirdRes} an improved level of distributions for primes,
we establish in the appendix a function field variant of \cite{FM98} giving quasi-orthogonality of the M\"{o}bius function and `inverse additive characters'.
While Fouvry-Michel work with characters to prime moduli, in order to establish \cref{TwinPrimesRes} we need arbitrary squarefree moduli.

\begin{thm} \label{French}

Let $q$ be a prime power, and let $\epsilon>0$.
Then for a squarefree $M \in \F_q[T]$, and an additive character $\psi$ mod $M$, we have
\begin{equation} \label{FMeq}
\sum_{\substack{f \in \F_q[T]^+ \\ |f| \leq X \\ (f, M) = 1}} \mu(f)\psi \left( \overline f \right) \ll |M|^{ \frac{3}{16} + \epsilon} X^{\frac{25}{32}}, \quad X,|M| \to \infty
\end{equation}
where $\overline f$ denotes the inverse of $f$ mod $M$, and the implied constant depends only on $q$ and $\epsilon$.

\end{thm}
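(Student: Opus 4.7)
The plan is to follow the strategy of Fouvry-Michel, translated into the function field setting. The starting point is Vaughan's identity in $\F_q[T]$, which splits $\mu$ into a Type I piece supported on short convolutions $\mu_{\leq U} * 1$ and a Type II (``bilinear'') piece supported on longer convolutions with both variables of intermediate size. Applying this identity to the sum in \eqref{FMeq} produces, up to harmless error terms, sums of the shapes
\begin{equation*}
S_I = \sum_{|d| \leq D} \alpha_d \sum_{\substack{|e| \leq X/|d| \\ (de,M)=1}} \psi(\overline{de}),
\qquad
S_{II} = \sum_{\substack{D \leq |d| \leq X/D \\ (d,M)=1}} \sum_{\substack{|e| \leq X/|d| \\ (e,M)=1}} \alpha_d \beta_e \psi(\overline{de}),
\end{equation*}
with $|\alpha_d|, |\beta_e| \leq 1$, and $D$ a cutoff to be optimized.

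For the Type I sum, I would complete the inner sum via Fourier expansion on $\F_q((1/T))/\F_q[T]$, reducing to complete sums of the form $\sum_{e \bmod M} \psi(\overline{de}) \psi_h(e)$. For squarefree $M$, CRT breaks this up into a product over the prime factors $P \mid M$, and each factor is a classical Kloosterman sum of size $O(|P|^{1/2})$ by Weil's Riemann hypothesis for curves over $\F_q$. After summing against the Fourier coefficients of the interval cutoff, this yields a bound for $S_I$ of the approximate shape $D\, |M|^{1/2+\epsilon}$. For the Type II sum, I would apply Cauchy-Schwarz in the $d$ variable to obtain
\begin{equation*}
|S_{II}|^2 \ll \frac{X}{D}\sum_{|e_1|,|e_2| \leq X/D}\; \Bigl|\sum_{d} \alpha_d \, \psi\bigl(\overline{de_1} - \overline{de_2}\bigr)\Bigr|,
\end{equation*}
and then complete the inner $d$-sum. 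Using CRT for squarefree $M$, the off-diagonal contribution (where $e_1 \not\equiv e_2 \bmod P$ for some $P \mid M$) is bounded prime-by-prime by $\sum_{d \bmod P} \psi_P(\overline{d}(\overline{e_1}-\overline{e_2}))$, again a Kloosterman sum of size $O(|P|^{1/2})$, while the diagonal $e_1 \equiv e_2 \bmod M$ contributes a main term controlled by $X^2/(D|M|)$.

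The main obstacle, and the reason for the specific exponents $3/16$ and $25/32$, is that Weil's bound alone does not suffice for the Type II piece: one needs an analog of the Deshouillers-Iwaniec bound for sums of Kloosterman sums, which over $\Z$ rests on Deligne's work, and in our setting will follow from Katz's equidistribution results for Kloosterman sheaves over $\F_q[T]$ (one sets up the relevant middle-extension sheaf associated to $d \mapsto \psi(\overline{d}(\overline{e_1}-\overline{e_2}))$ and controls its cohomology). Once these sheaf-theoretic inputs are in place, the remaining work is a bookkeeping optimization: balancing the contribution $\approx D|M|^{1/2+\epsilon}$ from Type I against the Cauchy-Schwarz output $\approx (X/D)^{1/2}(X^{1/2} + X/D^{1/2} \cdot |M|^{1/4+\epsilon})$ or similar from Type II, one chooses the cutoff $D$ to equalize them, and a routine computation yields the claimed bound $|M|^{3/16+\epsilon}X^{25/32}$. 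A secondary technical point is handling the non-prime squarefree modulus uniformly via CRT, which introduces only divisor-function factors that are absorbed into $|M|^\epsilon$.
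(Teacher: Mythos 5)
Your overall framework is right: Vaughan's identity to split into Type I and Type II sums, and you correctly sense that somewhere the argument must invoke a ``sums of Kloosterman sums'' estimate of Deligne/Katz type rather than just Weil for individual Kloosterman sums. But you have misplaced where the hard geometric input goes, and your Type I treatment as sketched is too weak.

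In the paper's argument (following Fouvry--Michel \S 4 rather than \S 5), the Type II sums are handled exactly as you describe for them — Cauchy--Schwarz in one variable, Polya--Vinogradov completion, and then CRT reduces to \emph{individual} Kloosterman sums modulo each prime, for which the Weil bound $2\sqrt{|P|}$ suffices (this is Proposition~\ref{type-II-complete}). The Type I sums are where the deep input lives. Your proposal for Type I — complete the inner $e$-sum and apply Weil — only gives roughly $D|M|^{1/2+\epsilon}$, which is not strong enough in the relevant range ($D$ up to $|M|^{1/8}X^{7/16}$): plugging in the endpoint gives $|M|^{5/8}X^{7/16}$, which exceeds the target $|M|^{3/16}X^{25/32}$ unless $|M|$ is much smaller than $X$. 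What the paper actually does for Type I is the bilinear shifting trick followed by H\"older's inequality to the sixth power, which produces a sum over a 6-tuple $\mathbf b$ of a correlation sum $\sum_x S(R_{\mathbf b}(x),z)$ where $R_{\mathbf b}$ is a degree-3 rational function; bounding this requires pulling back the Kloosterman sheaf $\mathcal K\ell_2$ along $zR_{\mathbf b}$ and using that its geometric monodromy is $SL_2$ (so $H^0_c$ and $H^2_c$ vanish) together with an Euler characteristic/Swan conductor count to bound $\dim H^1_c$ by $16$ — this is Proposition~\ref{CompleteType1}. So: keep your Type II sketch essentially as is, but the Type I side requires the bilinear-shift-plus-H\"older structure and the sheaf-cohomological bound on sums of Kloosterman sums; simple completion plus Weil will not close the argument.
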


For nonzero $M \in \F_q[T]$ we define Euler's totient function by
\begin{equation}
\varphi(M) = \left| \left( \F_q[T]/(M) \right)^\times \right|,
\end{equation}
and for $f \in \F_q[T]^+$, we define the von Mangoldt function by 
\begin{equation}
\Lambda(f) = 
\begin{cases}
\deg(P), \quad f = P^n \\
0, \quad \text{otherwise}.
\end{cases}
\end{equation}

\begin{thm} \label{FourthRes}

Fix $\delta < \frac{1}{126}$. 
For an odd prime $p$ and a power $q$ of $p$ with 
\begin{equation}
q > p^2e^2\left( \frac{51 - 26\delta}{1 - 126\delta} \right)^2,
\end{equation}
the following holds.
For a squarefree $M \in \F_q[T]$, a polynomial $a \in \F_q[T]$ coprime to $M$,
and $X$ a power of $q$ with $X^{\frac{1}{2} + \delta} \geq |M|$ we have
\begin{equation}
\sum_{\substack{f \in \F_q[T]^{+} \\ |f| \leq X \\ f \equiv a \ \mathrm{mod} \ M}} \Lambda(f) = \frac{X}{\varphi(M)}
+ o\left( \frac{X}{|M|} \right),
\quad |M| \to \infty.
\end{equation}

\end{thm}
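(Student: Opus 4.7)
The strategy is the classical Vaughan/Heath-Brown approach adapted to $\F_q[T]$, combining \cref{ThirdRes} to treat type I sums with \cref{French} to treat type II sums. First, one applies a Heath-Brown identity of order $K$ to decompose, for $|f| \leq X$,
\[
\Lambda(f) = \sum_{j=1}^{K} (-1)^{j-1}\binom{K}{j} \sum_{\substack{f = m_1 \cdots m_j n_1 \cdots n_j \\ |m_i| \leq X^{1/K}}} \mu(m_1) \cdots \mu(m_j) \log|n_j|,
\]
with $K$ a large integer to be optimized against $\eta$. After dyadic decomposition of every variable, the left-hand side of the theorem breaks into $O((\log X)^{O(1)})$ pieces of the shape
\[
\Sigma(\mathbf{M},\mathbf{N}) = \sum_{\substack{|m_i| \asymp M_i,\; |n_\ell| \asymp N_\ell \\ m_1 \cdots m_j n_1 \cdots n_j \equiv a \,(\mathrm{mod}\;M)}} \mu(m_1)\cdots \mu(m_j)\, \omega(n_1,\dots,n_j),
\]
with $\prod M_i \prod N_\ell \asymp X$, $M_i \leq X^{1/K}$, and $\omega$ built from the $\log$ weight and $1$'s.

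For each such piece I would regroup the variables into a single \emph{smooth} factor $n = n_{\ell_1}\cdots n_{\ell_s}$ of size $\asymp N$ and a single \emph{M\"obius} factor $m = m_1 \cdots m_j$ of size $\asymp A$, with $AN \asymp X$. If $N \geq |M|^{1+\kappa}$ for some small $\kappa > 0$, then the sum has the shape $\sum_m (\mu\ast\mu\ast\cdots)(m) \sum_{n\equiv a\overline m\,(M)} \omega(n)$; swapping the order and applying \cref{ThirdRes} variable-by-variable (with the level $1-\eta$) yields $o(X/|M|)$. Since $\eta$ may be taken arbitrarily small provided $q$ is large enough in terms of $p$, choosing $K$ large disposes of every dyadic piece admitting a grouping where the M\"obius factor is of size $\leq |M|^{1-\kappa}$.

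The remaining pieces are genuine type II bilinear sums
\[
\Sigma_{\mathrm{II}} = \sum_{|m|\asymp A,\,|n| \asymp B}\alpha_m \beta_n\, \mathbf 1_{mn\equiv a\,(\mathrm{mod}\;M)}, \qquad AB \asymp X,
\]
with $|M|^{1-\kappa} \leq A,B \leq X/|M|^{1-\kappa}$ and both $\alpha_m, \beta_n$ essentially M\"obius. On such a piece I would apply Cauchy--Schwarz in the shorter variable and detect the congruence $mn\equiv a$ by Fourier expansion in the additive characters of $\F_q[T]/(M)$. The principal character produces, after summation over all Heath-Brown pieces, the main term $X/\varphi(M)$; each nonprincipal $\psi$ leaves an inner sum essentially of the form $\sum_{|m|\asymp A}\mu(m)\psi(\overline m)$, to which \cref{French} applies, giving $\ll |M|^{3/16+\epsilon}A^{25/32}$ per character. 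Summing over the $\sim |M|$ nonprincipal additive characters and combining with the Cauchy--Schwarz loss in $n$, then optimizing $K$, $\kappa$, and the dyadic ranges against the constraint $|M|\leq X^{1/2+\delta}$, yields a total error $o(X/|M|)$ whenever $\delta < 1/126$.

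The principal obstacle will be the bookkeeping in the type II step: arranging the bilinear sum so that after Cauchy--Schwarz the \emph{inner} variable is exactly the M\"obius factor and the phase $\psi(\overline m)$ arises cleanly after absorbing $n$ and $a$ into the additive character. The numerical threshold $1/126$ is dictated by matching the Fouvry--Michel exponents $25/32$ and $3/16$ against $1/2+\delta$ and against the largest Heath-Brown order $K$ compatible with the quality of $\eta$ supplied by \cref{ThirdRes}; any inefficiency in any one of these three inputs propagates directly to the admissible $\delta$, and the inflated lower bound on $q$ in the theorem statement reflects the need to push $\eta$ in \cref{ThirdRes} close enough to $0$ to absorb the Heath-Brown overhead.
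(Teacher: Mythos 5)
Your high-level instinct --- play \cref{ThirdRes} against \cref{French} after a combinatorial decomposition of $\Lambda$ --- is the same as the paper's, but the decomposition you chose breaks both tools. The paper does not use Heath-Brown or Vaughan at this stage: it uses the single identity $\Lambda = -1*(\mu\cdot\deg)$, i.e.\ \eqref{ConvId}, precisely because both black boxes need the rough coefficient to be \emph{exactly} $\mu$ of one single long variable. Writing $X=q^{d+m}$, $|M|=q^m$, and letting $k$ be the degree of the M\"obius factor $A$, there are three ranges: for $k\le d$ the complementary smooth variable has degree $\ge m$, its count in the progression is exact, and \cref{MainTermPrimesLD} extracts the main term $X/\varphi(M)$; for $k>d$ with $(\tfrac{3}{16}+2\theta)m+\tfrac{25}{32}k<d$ the smooth variable is \emph{shorter} than $M$, so the congruence is detected by only $q^{k-d}$ additive characters, exactly cancelling the prefactor $q^{d-k}$, and \cref{FKMThm} is applied directly to $\sum_A\mu(A)\psi(a\overline{A})$ with no Cauchy--Schwarz; for the remaining $k$ one fixes the short smooth divisor and applies \cref{LinearFormsMobThm} (the source of \cref{ThirdRes}) to $\mu$ over a progression of length $q^{k-m}$. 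The bound $\delta<\tfrac{1}{126}$ is forced by requiring these last two ranges to meet, i.e.\ $k-m\gg m$ whenever $\tfrac{32}{25}\bigl(d-\tfrac{31}{32}m\bigr)\le k$, which needs $d\ge(1-\omega)m$ with $\omega<\tfrac{1}{32}$, and $\omega=\tfrac{4\delta}{1+2\delta}<\tfrac{1}{32}$ is exactly $\delta<\tfrac{1}{126}$; it has nothing to do with optimizing a Heath-Brown order $K$.

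As written, three steps of your plan have genuine gaps. (i) In your type I pieces every M\"obius variable has size at most $X^{1/K}$, far below $|M|\asymp X^{1/2}$ when $K$ is large, so \cref{ThirdRes} (which requires the modulus to be at most the length of the summed variable raised to $1-\eta$) cannot be applied ``variable-by-variable''; nor is it needed there, since the long smooth variable is counted exactly in $\F_q[T]$. (ii) In your type II pieces the coefficient on $m$ is a convolution of dyadically restricted M\"obius factors, not $\mu(m)$, so \cref{French} does not apply to $\sum_m\alpha_m\psi(a\overline{m})$; and if you first apply Cauchy--Schwarz to reduce to bounded coefficients, the M\"obius structure that \cref{French} exploits is destroyed before it can be invoked --- you cannot have both. (iii) Even granting the Fouvry--Michel bound per character, summing over the $\sim|M|$ nonprincipal characters while estimating the smooth factor trivially gives $\ll X|M|^{3/16+\epsilon}A^{-7/32}$, which is $o(X/|M|)$ only when $A\gg|M|^{38/7}$, impossible in the critical regime $|M|\asymp X^{1/2+\delta}$ where $A\le X\approx|M|^{2}$; the paper escapes this because in its middle range the smooth factor is shorter than $M$, so only $q^{k-d}$ characters occur and they are absorbed by the prefactor $q^{d-k}$. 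The missing idea, in short, is to keep a single exact-$\mu$ factor of every possible length via \eqref{ConvId} rather than shattering $\mu$ into short pieces.
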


We have not ventured too much into improving the constant $\frac{1}{126}$,
as our method cannot give anything above $\frac{1}{6}$,
even if \cref{French} would give square root cancellation.
Since our proof of \cref{FourthRes} is based on the `convolutional' connection of von Mangoldt and M\"{o}bius,
it is not surprising that a level of distribution of $\frac{2}{3} = \frac{1}{2} + \frac{1}{6}$, 
which is a longstanding barrier for the divisor function over $\Z$ (perhaps the most basic convolution), 
is a natural limit of our techniques. A large finite field variant of \cref{ThirdRes} and \cref{FourthRes} was earlier proved in \cite[Theorem 2.5]{BBSR}.



\begin{remark}
It would be interesting to see whether our results can be extended to characteristic $2$,
perhaps in a manner similar to which \cite{Ca15} extends the results of \cite{CaRu14}.
\end{remark} 
 

\subsection{Additional results in small characteristic}

Throughout this work, 
we have not made every possible effort to reduce the least values of the prime powers $q$ to which our theorems apply.
Instead, we present in the last section some results that hold for $q$ as small as $3$.

The first concerns sign changes of M\"{o}bius in short intervals.
Improving on many previous works,
Mat\"{o}maki and Radziwi\l{}\l{} have shown in \cite{MR16} that for any $\eta > 1/2$,
and any large enough positive integer $N$,
there exist integers $a,b$ with $|a|, |b| \leq N^\eta$ such that $\mu(N + a) = 1, \ \mu(N + b)=-1$.
In characteristic $3$, we show that the exponent $1/2$ can be improved to $3/7$.

\begin{thm} \label{SmallqRes1}

Let $q$ be a power of $3$, and fix $3/7 < \eta < 1$.
Then for any $f \in \F_q[T]^+$ of large enough norm,
there exist $g,h \in \F_q[T]$ with $|g|,|h| \leq |f|^\eta$ such that $\mu(f+g) = 1$ and $\mu(f+h) = -1$.

\end{thm}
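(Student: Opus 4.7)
The plan is to leverage the characteristic-$p$ observation at the heart of \cref{SecRes}: for fixed $r \in \F_q[T]$, the function $s \mapsto \mu(r + s^p)$ essentially coincides with $\chi_{D_r}(s + c_r)$, where $\chi_{D_r}$ is a quadratic Dirichlet character whose conductor $D_r$ depends on the derivative $r'$ and $c_r$ is an explicit shift. Specialized to $p = 3$, this converts short-interval sign changes of $\mu$ near $f$ into short-interval sign changes of a quadratic Dirichlet character.

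I would parameterize candidate shifts using the unique decomposition $g = g_0 + s^3$, which is available in characteristic $3$ because every element of $\F_q$ is a cube; here $g_0 \in \F_q[T]$ has no coefficient at any degree divisible by $3$ and $s \in \F_q[T]$ is arbitrary. The constraint $|g| \le X := |f|^\eta$ then separates into $|g_0| \le X$ together with $|s| \le Y := X^{1/3} = |f|^{\eta/3}$. I would fix a single generic $g_0$, chosen so that the derivative $r' = f' + g_0'$ of $r := f + g_0$ is squarefree of maximal degree, making the associated quadratic character $\chi_{D_r}$ nontrivial with $|D_r|$ of order $|f|$. By the observation, it then suffices to exhibit $s_1, s_2 \in \F_q[T]$ with $|s_i| \le Y$ such that $\chi_{D_r}(s_1 + c_r) = +1$ and $\chi_{D_r}(s_2 + c_r) = -1$; the desired shifts are then $g := g_0 + s_1^3$ and $h := g_0 + s_2^3$.

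This reduces the theorem to a short-interval sign-change statement for the nontrivial quadratic character $\chi_{D_r}$: it should attain both values $\pm 1$ on every interval of the form $\{u \in \F_q[T] : |u - c| \le Y\}$ with $Y \ge |D_r|^{1/7}$. Because $\eta > 3/7$ translates, via $Y = |f|^{\eta/3}$ and $|D_r| \approx |f|$, into $Y \ge |f|^{1/7} \approx |D_r|^{1/7}$, the exponent $3/7$ in the theorem is exactly what is needed to meet this threshold.

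The main obstacle is establishing the short-interval sign change in small characteristic. At $q$ a power of $3$, \cref{FirstRes} is inapplicable, since it requires $q > e^2/\eta^2$, so the estimate must come from a sharper character sum bound adapted to quadratic characters in small characteristic. This could take the form of a sheaf-theoretic treatment analogous to the one behind \cref{FirstRes} but specialized to the quadratic case, or of a bilinear Burgess-style estimate insensitive to the size of $q$; an averaging over $g_0$ may additionally be needed to handle the residual pathologies where $\chi_{D_{f+g_0}}$ degenerates, for instance by becoming principal or acquiring an unexpectedly small conductor.
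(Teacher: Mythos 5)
Your high-level strategy — convert the sign-change problem for $\mu$ into a sign-change problem for a quadratic Dirichlet character via the observation behind \cref{PreparingMobiusToArithProgLem}, then apply a Burgess-type bound — matches the paper. But your bookkeeping has the conductor pointing in the wrong direction, and this creates a genuine gap.

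You propose to choose $g_0$ so that $r' = f' + g_0'$ is squarefree of maximal degree, hence $|D_r| \approx |f| = q^d$, and then observe that the $s$-sum has length $Y \approx q^{\eta d/3}$. For a Burgess bound to kick in you would need $Y \ge |D_r|^{1/4}$, which becomes $\eta/3 \ge 1/4$, i.e.\ $\eta \ge 3/4$ — far worse than $3/7$. The threshold $|D_r|^{1/7}$ you invoke is not a theorem: the function-field Burgess bound (Hsu \cite{Hsu99}) requires sums longer than the $1/4$-th power of the modulus. Your exponent $1/7$ seems reverse-engineered from the target $\eta > 3/7$ rather than derived from a known estimate, and no such estimate is available in this range. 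The suggestion of a sharper sheaf-theoretic bound is not a fix either, since the whole point of this theorem is that \cref{FirstRes} is unavailable for small $q$.

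The paper's proof goes the opposite way: it chooses the ``free'' coefficients of $f$ (in degrees $\le \eta d$, which is exactly what the theorem permits) to make the conductor \emph{small}, not large. Concretely, it picks the largest even $c < \eta d$ with $3 \nmid c$, sets the coefficient of $T^c$ to $1$, and kills all coefficients of $T^k$ for $k < c$ with $3 \nmid k$. In characteristic $3$ the terms with $3 \mid k$ do not contribute to $f'$, so $T^{c-1}$ exactly divides $f'$, forcing $d(E) = d(\mathrm{rad}(f')) \le d - c + 1 \approx (1-\eta)d$. One then varies $b$ with $d(b) < \lfloor c/3 \rfloor \approx \eta d / 3$ and checks $\frac{\eta d/3}{(1-\eta)d} > 1/4 \iff \eta > 3/7$, which is precisely the Burgess threshold. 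So the exponent $3/7$ comes from balancing the cube-root loss on the length of the $s$-sum against a shrunken conductor of size $(1-\eta)d$, not from $|f|$ itself. Finally, the choice of $c$ (even, coprime to $3$) also guarantees $S = \pm 1$ and, via \cref{PreparingMobiusToArithProgRem}, that $\chi$ is nonprincipal — the degenerate cases you worried about are ruled out deterministically, with no need to average over $g_0$.
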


We follow the same proof strategy relating M\"{o}bius to characters,
but since we allow small values of $q$, 
we cannot apply \cref{FirstRes} anymore.
Now however, once we are interested in sign change only (and not cancellation), 
we can focus on just one of the derivatives appearing. 
It turns out that if this derivative has a relatively large order of vanishing at $0$,
the conductor of the associated character is relatively small, 
and we can apply a function field version (see \cite{Hsu99}) of the aforementioned result of Burgess.
For $p > 3$, the character sums arising are too short for the Burgess bound to apply.

For large enough $q$, 
\cref{SmallqRes1} follows from \cref{ThirdRes} (since $T \mapsto 1/T$ allows one to think of short intervals as arithmetic progressions),
and also from the $k=1$ case of \cref{SecRes} (as we have sufficient uniformity in the shift).

Our last result, in the spirit of \cite{Gal72}, shows that M\"{o}bius enjoys cancellation in the arithmetic progression $1$ mod a growing power of a fixed prime $P$, no matter how slowly does the length of the progression increase.

\begin{thm} \label{SmallqRes2}

Let $q$ be a power of $3$.
Fix an irreducible $P \in \F_q[T]$. 
Then for a positive integer $n$ we have
\begin{equation}
\sum_{\substack{f \in \F_q[T]^+ \\ |f| \leq X \\ f \equiv 1 \ \mathrm{mod} \ P^n}} \mu(f) = o \left( \frac{X}{|P|^n}\right),
\quad \frac{X}{|P|^n} \to \infty.
\end{equation}

\end{thm}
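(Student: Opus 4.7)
My plan is to reduce the sum to a shifted character sum via the characteristic $3$ M\"obius-to-character identity central to the paper, and then to invoke the function field Burgess bound of \cite{Hsu99}. Parametrize $f = 1 + P^n g$, so the sum is over $g \in \F_q[T]$ of degree at most $L \defeq \log_q(X/|P|^n)$; the hypothesis becomes $L \to \infty$ and the target is $o(q^L)$. Since $q$ is a power of $3$ and every element of $\F_q$ is a cube, each monic $f \in \F_q[T]$ decomposes uniquely as $f = A^3 + R$ with $R$ a sum of monomials of degree not divisible by $3$; crucially $f' = R'$ in characteristic $3$. The constraint $f \equiv 1 \bmod P^n$ rewrites as $A^3 \equiv 1 - R \bmod P^n$, and for each $R$ admitting a solution, the set of valid $A$ forms a coset of $P^{\lceil n/3 \rceil}\F_q[T]$ (the kernel of cubing on $\F_q[T]/(P^n)$). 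Moreover $R' = nP^{n-1}P'g + P^n g'$ is divisible by $P^{n-1}$ in every residue class of $n$ modulo $3$, so $P$ appears in the squarefree kernel $D_R \defeq \mathrm{rad}(R')$ exactly once, giving $\deg D_R \leq L + O_P(1)$.

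Now apply the paper's identity, which outside a negligible exceptional set of $R$ reads $\mu(A^3 + R) = \chi_{D_R}(A + c_R)$ for a quadratic Dirichlet character $\chi_{D_R}$ and a shift $c_R$. Decompose $D_R = P \cdot E$ with $(E, P) = 1$; since $P^{\lceil n/3 \rceil}$ is invertible modulo $E$, parametrizing $A = A_0 + P^{\lceil n/3 \rceil} B$ with $\deg B \leq L/3 + O_P(1)$ brings the inner sum (for fixed $R$) to the form
\begin{equation*}
\sum_{\deg B \leq L/3 + O_P(1)} \chi_E(B + \gamma)
\end{equation*}
up to a unimodular prefactor $\chi_P(A_0 + c_R)\chi_E(P^{\lceil n/3 \rceil})$, for some shift $\gamma \in \F_q[T]/(E)$. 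Since $\deg E \leq L + O_P(1)$, the Burgess-type inequality $q^{L/3} \geq |E|^{1/4 + \epsilon}$ holds for any fixed $\epsilon < 1/12$ once $L$ is large enough; Hsu's function field Burgess bound then delivers $o(q^{L/3})$ cancellation. Summing over the $O(q^{2L/3})$ relevant $R$ yields the desired $o(q^L) = o(X/|P|^n)$.

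The main obstacle will be accounting for the exceptional pairs $(A, R)$ where the M\"obius-character identity degenerates (for instance, $A^3 + R$ not squarefree, $R = 0$, or $\chi_E$ principal), as well as the monicity and leading-term boundary conditions coming from the decomposition $f = A^3 + R$; one must show these contribute at most $o(q^L)$ by a direct combinatorial count. A secondary technical point is to verify that $P$ truly appears to the first power in $D_R$ for generic $R$, which holds when $P \nmid g$, with the $P \mid g$ case absorbed into the negligible error via the same counting.
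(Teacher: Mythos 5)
Your proposal works the same core idea as the paper (in characteristic $3$, $\mu$ becomes a shifted quadratic Dirichlet character along the fibers of the derivative) and the same closing tool (Hsu's Burgess bound, with ratio $\to 1/3 > 1/4$), but you organize the decomposition differently, so this is a genuinely different route. The paper keeps the progression structure intact throughout: it writes $f = a + gP^n$, decomposes $g = r + s^3$ as in \cref{LinearFormsDistinctDerivatives}, and feeds this into \cref{PreparingMobiusToArithProgLem} with $M = P^n$; since the derivative of $a + (r+s^3)P^n$ depends only on $r$ precisely when $3 \mid n$, the paper first reduces general $n$ to $3\mid n$ by refining the modulus to $P^{n+\beta}$ (this is why its auxiliary $a$ satisfies $a \equiv 1 \bmod P^{n-2}$ rather than $a=1$), and then gets a free short interval of $s$ to which Burgess applies directly. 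You instead split $f = A^3 + R$ outright (monomials separated by degree mod $3$), which handles all $n$ at once with no reduction, but the congruence $f \equiv 1 \bmod P^n$ then couples $A$ and $R$: only certain $R$ admit any $A$, and for those, $A$ runs over a coset of $P^{\lceil n/3\rceil}$, giving a coset-parametrized rather than free inner sum. Your bookkeeping does check out for fixed $P$ (admissible-$R$ count times coset length equals $q^L$; $\deg D_R \le L + O_P(1)$; Burgess ratio tends to $1/3$; exceptional $R$ with $\chi_E$ principal force $R'$ to be a unit times a square up to the forced power of $P$, hence are $\ll q^{L/2+O_P(1)}$). However, two things you flag but leave open deserve emphasis: the $\mu$-to-character identity you invoke is not a literal instance of \cref{PreparingMobiusToArithProgLem} — setting $M=1$, $a = R$, $g = A^3$ would require both $R$ and $A$ to be monic, but exactly one fails to be depending on $\deg f \bmod 3$, so you must rederive the identity from Pellet's formula and \cref{JacobiLem} using only that $f$ itself is monic; and the claim that $P$ divides $D_R$ exactly once is false for $n = 1$ with $P\nmid g$ and needs a case split on $n\bmod 3$ and on $P\mid g$. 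These are surmountable, but they are exactly the technicalities the paper's $3\mid n$ reduction is designed to avoid.
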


As before, we can obtain a power saving.
Since the progressions are so short, this result does not follow from the previous ones, even if $q$ is large.
In view of Maier's phenomenon, we cannot expect to obtain the theorem for all residue classes.

\subsection{Further directions}

In future work, we hope to use some of the methods introduced here to address the following problems:

\begin{itemize}

\item Obtaining cancellation in `polynomial M\"{o}bius sums' such as
\begin{equation*}
\sum_{\substack{ f \in \F_q[T]^+ \\ |f| \leq X}} \mu \left( f^2 + T \right)
\end{equation*}
which may be relevant for counting primes of the form $f^2 + T$.

\item Calculating the variance (and higher moments) of the M\"{o}bius function in short intervals (and arithmetic progressions) over $\F_q[T]$. 




\end{itemize}

\subsection{Notation}

From this point on, it will be more convenient to work with degrees of polynomials instead of absolute values.
For $g \in \F_q[T]$ we denote its degree by $d(g)$. By convention, the latter is $-\infty$ if $g=0$.
The letter $q$ denotes a prime power, and is often suppressed from notation such as
\begin{equation}
\mathcal{M}_d = \left \{g \in \F_q[T]^+ : d(g) = d \right \}.
\end{equation}

\section{Character sums}

The main result of this section is the following.

\begin{thm}\label{main-character-sum}

Let $t \leq m$ be natural numbers, let $f \in \F_q[T]$, let $g \in \mathcal{M}_m$ be squarefree, and let $\chi: \left(\mathbb F_q[T]/g\right)^\times \to \mathbb C^\times$ be a nontrivial character. Then

\begin{equation}\label{main-character-sum-equation}
\left| \sum_{\substack{ h \in \mathbb F_q[T] \\ d( h) < t \\ \gcd(f+h,g)=1 }} \chi(f+h) \right| \leq 
( q^{1/2} +1) {m-1 \choose t} q^{ \frac{t}{2} }.
\end{equation}
\end{thm}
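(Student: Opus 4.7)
The plan is to interpret the sum geometrically and bound it via the Grothendieck--Lefschetz trace formula, following the strategy attributed in the introduction to the Katz appendix of \cite{Hoo91}. First I would parametrize the short interval: $h\in\mathbb F_q[T]$ with $d(h)<t$ corresponds to coordinates $x=(x_0,\dots,x_{t-1})\in\mathbb A^t_{\mathbb F_q}$ via $h=\sum_j x_j T^j$. Since $g\in\mathcal M_m$ is squarefree, I factor $g=\prod_{i=1}^m(T-\alpha_i)$ with distinct $\alpha_i\in\overline{\mathbb F_q}$, and use the Chinese Remainder Theorem (after passage to $\overline{\mathbb F_q}$) to decompose $\chi$ as a product $\chi(u)=\prod_i \chi_i(u(\alpha_i))$, where the $\chi_i$ are multiplicative characters of $\overline{\mathbb F_q}^\times$, not all trivial. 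Evaluating at $u=f+h$ yields $(f+h)(\alpha_i)=L_i(x)$ where $L_i(x)=f(\alpha_i)+\sum_{j=0}^{t-1}\alpha_i^j x_j$ is an affine-linear form, and the sum becomes a character sum of the form $\sum_{x\in\mathbb A^t(\mathbb F_q),\, \prod_i L_i(x)\neq 0}\prod_i\chi_i(L_i(x))$, invariant under $\Gal(\overline{\mathbb F_q}/\mathbb F_q)$.

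Next I would package this sheaf-theoretically. Each $\chi_i$ produces a rank-one Kummer sheaf $\mathcal L_{\chi_i}$ on $\mathbb G_m$ pure of weight $0$, and the tensor product $\mathcal F:=\bigotimes_i L_i^{*}\mathcal L_{\chi_i}$ is a lisse rank-one sheaf on the open $U\subset \mathbb A^t$ where none of the $L_i$ vanish. The sum in question equals $\sum_{j}(-1)^j \tr(\Frob_q\mid H^{j}_c(U_{\overline{\mathbb F_q}},\mathcal F))$, and by Deligne's theorem each $H^j_c$ is mixed of weights $\le j$, so the task reduces to bounding the dimensions and controlling the weights of these cohomology groups.

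The key geometric input is that the linear parts of $L_1,\dots,L_m$ are the Vandermonde vectors $(1,\alpha_i,\dots,\alpha_i^{t-1})$, any $t$ of which are linearly independent since the $\alpha_i$ are distinct. Hence the divisor $D=\bigcup_{i}\{L_i=0\}$ in $\mathbb A^t_{\overline{\mathbb F_q}}$ is an arrangement of $m$ hyperplanes in which any $t$ meet transversally, so $D$ is simple normal crossings away from the finite set of points where at least $t+1$ hyperplanes meet. As in \cite{Kat89}, the SNC property on the generic stratum controls the ramification of $\mathcal F$, forcing the Artin vanishing $H^{j}_c(U,\mathcal F)=0$ for $j<t$ and the vanishing of $H^{2t}_c$ (by non-triviality of the local monodromies coming from the $\chi_i$). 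The remaining cohomology in degrees $j\in[t,2t-1]$ is then controlled by a vanishing-cycles argument comparing the family parametrized by $f$ to the $f=0$ fibre; the vanishing cycles are supported at the isolated non-SNC points and can be computed explicitly. Combining this with the fact that the Euler characteristic of the complement of $m$ hyperplanes in general position in $\mathbb A^t$ equals $(-1)^t\binom{m-1}{t}$ for $m\ge t$ yields $\dim H^t_c(U,\mathcal F)\le \binom{m-1}{t}$, and the weight-$t$ contribution gives the main term $\binom{m-1}{t}q^{t/2}$.

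The main obstacle will be the careful vanishing-cycle bookkeeping at the finitely many non-SNC intersection points: I need to show that the ``extra'' cohomology contributed there is uniformly bounded in $f$ and carries weight at most $t+1$, which is precisely the source of the extra $(q^{1/2}+1)$ factor in the stated estimate. A secondary subtlety is that not all the $\chi_i$ need be non-trivial, so one must verify that the Kummer sheaves $\mathcal L_{\chi_i}$ for the non-trivial indices alone produce enough tame ramification along the corresponding hyperplanes to kill $H^{<t}_c$ and $H^{2t}_c$; this should follow from the running assumption that $\chi$ itself is non-trivial, since some $\chi_i$ must then be non-trivial, and tensoring with trivial factors does not disturb the vanishing-cycle analysis on the other components of $D$.
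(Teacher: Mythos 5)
Your proposal captures the overall strategy correctly: view the short interval as $\mathbb{A}^t$, split $g$ over $\overline{\F_q}$, realize the summand as the trace of Frobenius on the compactly supported cohomology of a rank-one Kummer sheaf $\mathcal F$ on the complement of a hyperplane arrangement, observe that the Vandermonde structure of the linear parts forces simple normal crossings away from the locus where $t+1$ hyperplanes meet, and finish with Deligne's weight bounds. The weight analysis you sketch at the end is also the right shape: a main contribution of dimension $\binom{m-1}{t}$ in degree $t$ (weight $\le t$) plus an extra contribution in degree $t+1$ (weight $\le t+1$) of the same dimension, giving $(q^{1/2}+1)\binom{m-1}{t}q^{t/2}$.

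The genuine gap is in the vanishing-cycles comparison. You propose to compare to the $f=0$ fibre, but that is precisely the \emph{most} degenerate member of the family, not a tractable one: when $f\equiv 0 \pmod g$, every $L_i(x)=h(\alpha_i)$ vanishes at $h=0$, so all $m$ hyperplanes pass through the origin of $\mathbb{A}^t$ and the arrangement is maximally non-SNC. If instead you intended to compare to a generic $f$, you still need to know that the generic fibre's compactly supported cohomology is concentrated in degree $t$ with rank exactly $\binom{m-1}{t}$. Artin vanishing and nontrivial monodromy give $H^j_c=0$ for $j<t$ and $j=2t$, and the Euler characteristic identity gives the alternating sum, but this does not by itself rule out contributions in degrees $t+1,\dots,2t-1$: the non-resonance conditions required for concentration in the top degree of a hyperplane-arrangement local system are not obviously met, particularly when some of the $\chi_i$ are trivial. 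Without an argument for the generic fibre, the long exact sequence in vanishing cycles has an unknown term, so the bound $\dim H^t_c\le\binom{m-1}{t}$ at the actual point does not follow.

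What the paper does instead, and what your parametrization is missing, is to enlarge the family to a two-parameter one with the polynomial $c_1 f + h + c_2 T^t$ over $(c_1,c_2)\in\mathbb{A}^2$. The corner $(0,1)$ gives the polynomial $T^t+h$, \emph{monic of degree exactly $t$}, so that fibre is identified with the $t$-th symmetric power of $\operatorname{Spec}\F_q[x,g(x)^{-1}]$; the cohomology of $\mathcal L_\chi$ there is computed explicitly from $S_t$-invariants of the $t$-th tensor power of $H^1$ of the punctured affine line (Lemma 2.3 in the paper), yielding concentration in degree $t$ with rank $\binom{m-1}{t}$. A scale-invariance argument ($(c_1,c_2)\mapsto(\lambda c_1,\lambda c_2)$) then propagates this computation to a generic stalk, and the vanishing cycles at $(1,0)$ along the $c_2$-axis capture exactly the discrepancy at the actual point; a combinatorial count then bounds the degree-$(t+1)$ piece again by $\binom{m-1}{t}$. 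The auxiliary monomial $c_2T^t$, which produces the tractable ``L-function'' anchor, is the key idea absent from your proposal, and the Euler-characteristic bookkeeping you propose does not substitute for it.
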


To prove this theorem, we use the following geometric setup. View $\mathbb A^t$ as a space parameterizing polynomials $h$ of degree less than $t$ and let $(c_1,c_2) $ be coordinates on $\mathbb A^2$. Let $U \subseteq \mathbb A^t \times \mathbb A^2$ be the open set consisting of points $(h,(c_1,c_2))$ where $c_1 f+  h + c_2 T^t$ is prime to $g$. Let $j: U \to \mathbb P^t \times \mathbb A^2$ be the open immersion, embedding $\mathbb A^t$ into $\mathbb P^t$ in the usual way. Let $\pi: \mathbb P^t \times \mathbb A^2 \to \mathbb A^2$ be the projection.

Let $\mathcal T$ be the torus parameterizing of polynomials of degree less than $m$ that are relatively prime to $g$. (The space $\mathcal T$ isa torus because, over an algebraically closed field, we may factor $g$ and write $\mathcal T$ as $\mathbb G_m^m$, with the coordinates given by evaluating the polynomial on each of the roots of $g$.)
On $\mathcal T$, we have a character sheaf $\mathcal L_\chi$ whose trace function is $\chi$, constructed by the Lang isogeny.  Let $\mathcal L_\chi ( c_1 f +  h + c_2 T^t)$ be the pullback of this sheaf to $U$ along the natural map from $U$ to $\mathcal T$ that sends $(h,(c_1,c_2) )$ to $c_1 f+ h +c_2  T^t$. We will prove \eqref{main-character-sum-equation} using geometric properties of the complex $R \pi_* j_! \mathcal L_\chi ( c_1 f +  h + c_2 T^t)$.

\begin{lem}\label{scale-invariance} The complex $R \pi_* j_! \mathcal L_\chi ( c_1 f +  h + c_2 T^t)$ is geometrically isomorphic to its pullback under the map $(c_1,c_2) \to (\lambda c_1, \lambda c_2)$ for any $\lambda \in \overline{\F_q}^\times$. \end{lem}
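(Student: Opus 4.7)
The plan is to lift the scaling $(c_1,c_2) \mapsto (\lambda c_1, \lambda c_2)$ on the base $\mathbb{A}^2$ to an automorphism $\sigma_\lambda$ of the total space $\mathbb{P}^t \times \mathbb{A}^2$ that carries the sheaf $j_! \mathcal{L}_\chi(c_1 f + h + c_2 T^t)$ to something geometrically isomorphic to itself. The key observation is homogeneity: the polynomial $c_1 f + h + c_2 T^t$ is linear in $(h, c_1, c_2)$, so if we also scale $h \mapsto \lambda h$, then the whole polynomial is simply multiplied by $\lambda$.

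Concretely, I would define $\sigma_\lambda$ as the product of the scaling $h \mapsto \lambda h$ on $\mathbb{A}^t$, extended to a linear automorphism of $\mathbb{P}^t$ fixing the hyperplane at infinity, with $(c_1, c_2) \mapsto (\lambda c_1, \lambda c_2)$ on $\mathbb{A}^2$. First I would check that $\sigma_\lambda$ preserves the open subscheme $U$: this is immediate because multiplication by a unit $\lambda \in \overline{\F_q}^\times$ preserves coprimality with $g$. Next I would observe that the natural map $U \to \mathcal{T}$, sending $(h,c_1,c_2)$ to $c_1 f + h + c_2 T^t$, composed with $\sigma_\lambda|_U$, equals the same natural map composed with multiplication by $\lambda$ on $\mathcal{T}$.

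At this point everything reduces to the following geometric claim, which I expect to be the only nontrivial point of the argument: on the torus $\mathcal{T} \cong \mathbb{G}_m^m$ over $\overline{\F_q}$, the character sheaf $\mathcal{L}_\chi$ is isomorphic to its pullback under multiplication by any $\lambda \in \overline{\F_q}^\times$. This follows from the fact that $\mathcal{L}_\chi$ is built, via Lang isogeny, from Kummer sheaves on the $\mathbb{G}_m$-factors (coming from evaluating $h$ at the geometric roots of $g$), and the Kummer sheaf on $\mathbb{G}_m$ is determined by a character of the tame geometric fundamental group $\pi_1^{\text{tame}}(\mathbb{G}_m)$. Since translation by $\lambda$ is an automorphism of $\mathbb{G}_m$ over $\overline{\F_q}$ and the tame fundamental group is abelian, the induced action on characters is trivial, hence $[\lambda]^* \mathcal{L}_\chi \cong \mathcal{L}_\chi$.

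Combining these ingredients, $\sigma_\lambda^* \mathcal{L}_\chi(c_1 f + h + c_2 T^t)$ is geometrically isomorphic to $\mathcal{L}_\chi(c_1 f + h + c_2 T^t)$ on $U$. Applying $j_!$ and $R\pi_*$ and using that both commute with pullback along an automorphism of the base $\mathbb{A}^2$ (smooth base change on the $\mathbb{A}^2$ factor, together with compatibility of $j_!$ with pullback along the open immersion $j$), one obtains the desired geometric isomorphism between $R\pi_* j_! \mathcal{L}_\chi(c_1 f + h + c_2 T^t)$ and its pullback under $(c_1,c_2) \mapsto (\lambda c_1, \lambda c_2)$.
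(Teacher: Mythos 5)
Your approach is essentially the same as the paper's: lift the scaling on the base to the diagonal scaling of $(h,c_1,c_2)$, observe that this preserves $U$ and intertwines $\pi$ and the map to $\mathcal T$, and reduce to geometric invariance of the character sheaf $\mathcal L_\chi$ under multiplicative translation. The paper states this last step in one line by appeal to the construction of $\mathcal L_\chi$ as a character sheaf.

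One small but genuine flaw in your justification of that step: you argue that since multiplication by $\lambda$ is an automorphism of $\mathbb G_m$ and $\pi_1^{\mathrm{tame}}(\mathbb G_m)$ is abelian, the induced action on characters is trivial. Abelianness alone does not make automorphisms act trivially on characters; for example inversion $x\mapsto x^{-1}$ is an automorphism of $\mathbb G_m$ and sends $\mathcal L_\chi$ to $\mathcal L_{\chi^{-1}}$. The correct reason is the character-sheaf property itself, namely $m^*\mathcal L_\chi \cong \mathcal L_\chi \boxtimes \mathcal L_\chi$ for the multiplication map $m$ (equivalently, translation by $\lambda$ lifts compatibly along the Kummer covers $x\mapsto x^n$), and restricting to $\{\lambda\}\times\mathbb G_m$ gives $[\lambda]^*\mathcal L_\chi\cong\mathcal L_\chi$ geometrically. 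With that repair, your argument is correct and matches the paper.
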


\begin{proof} Because $U$ is invariant under multiplying  $c_1,c_2,$ and $h$ by $\lambda$, as are the maps $j$ and $\pi$, its suffices to check that $\mathcal L_\chi ( c_1 f +  h + c_2 T^t)$ is geometrically isomorphic to its pullback under this multiplication map. It is sufficient that $\mathcal L_\chi$ on $\mathcal T$ is geometrically isomorphic to its pullback under any multiplicative translation, which follows from its construction as a character sheaf. \end{proof}

\begin{lem}\label{l-function-case} The stalk of $R \pi_* j_! \mathcal L_\chi ( c_1 f +  h + c_2 T^t)$ at the point $(0,1)$ is supported in degree $t$, where it has rank ${m-1 \choose t}$. \end{lem}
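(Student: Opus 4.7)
The plan is to identify the stalk at $(0,1)$ with the compactly supported cohomology of a character sheaf on the $t$-th symmetric power of the affine line minus $V(g)$, and then reduce to the standard $L$-function computation on that curve. Since $\pi : \mathbb{P}^t \times \mathbb{A}^2 \to \mathbb{A}^2$ is proper, proper base change identifies this stalk with $R\Gamma_c(U_0, \mathcal{L}_\chi(h + T^t))$, where $U_0 = \{h \in \mathbb{A}^t : \gcd(h + T^t, g) = 1\}$. The map $h \mapsto h + T^t$ identifies $U_0$ with the space of monic polynomials of degree exactly $t$ coprime to $g$, and since a monic polynomial over an algebraically closed field is determined by its multiset of roots, this space is canonically the $t$-th symmetric power $\mathrm{Sym}^t(V)$ of the smooth affine curve $V := \mathbb{A}^1 \setminus V(g)$.

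Under this identification, the multiplicativity $\chi(P_1 P_2) = \chi(P_1)\chi(P_2)$ shows that $\mathcal{L}_\chi(h + T^t)$ becomes the descent $\mathcal{L}_\chi^{(t)}$ of the external tensor product $\mathcal{L}_\chi^{\boxtimes t}$ along the quotient map $V^t \to \mathrm{Sym}^t(V)$. I would then invoke the symmetric-product formula for $\ell$-adic cohomology of sheaves on curves (due to Macdonald in topology, and established $\ell$-adically, e.g.\ in SGA 4.5):
\begin{equation*}
R\Gamma_c(\mathrm{Sym}^t V, \mathcal{L}_\chi^{(t)}) \;\cong\; \mathrm{Sym}^t_{\mathrm{gr}} R\Gamma_c(V, \mathcal{L}_\chi),
\end{equation*}
where $\mathrm{Sym}^t_{\mathrm{gr}}$ denotes the $t$-th symmetric power taken with the Koszul sign rule (symmetric on even-degree parts, exterior on odd-degree parts).

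It remains to compute $R\Gamma_c(V, \mathcal{L}_\chi)$. Since $g$ is squarefree, $(\mathbb{F}_q[T]/g)^\times = \prod_{P \mid g}(\mathbb{F}_q[T]/P)^\times$ has order coprime to $p$, so $\chi$ is tame and consequently $\mathcal{L}_\chi$ is tame at every missing point of $V \subset \mathbb{P}^1$; Grothendieck-Ogg-Shafarevich then gives Euler characteristic $1 - m$. Moreover $H^0_c$ vanishes because $V$ is connected, noncompact, and $\mathcal{L}_\chi$ is lisse, while the nontriviality of $\chi$ forces nontrivial local monodromy at some prime divisor of $g$, so $\mathcal{L}_\chi$ is geometrically nontrivial and $H^2_c$ vanishes by Poincaré duality. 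Hence $R\Gamma_c(V, \mathcal{L}_\chi)$ is concentrated in degree $1$ with dimension $m - 1$; the $t$-th graded symmetric power of an object purely in odd degree returns the ordinary $t$-th exterior power placed in cohomological degree $t$, of dimension $\binom{m-1}{t}$, as claimed.

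The main obstacle is the symmetric-product formula invoked in the middle step. In the $\ell$-adic setting this requires choosing the coefficient prime $\ell$ coprime to $t!$ so that taking $S_t$-invariants is exact along $V^t \to \mathrm{Sym}^t V$ (which is \'etale off the big diagonal), and one must also promote the matching of trace functions between $\mathcal{L}_\chi(h+T^t)$ and $\mathcal{L}_\chi^{(t)}$ to an honest sheaf isomorphism; the latter follows from the Lang-torsor construction of $\mathcal{L}_\chi$ together with the compatibility of the character sheaf on the torus $\mathcal{T}$ with multiplication.
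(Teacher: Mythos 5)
Your proposal is correct and follows essentially the same route as the paper: proper base change to identify the stalk with $R\Gamma_c$ of the sheaf on the space of monic degree-$t$ polynomials prime to $g$, recognition of that space as the $t$-th symmetric power of $\mathbb{A}^1 \setminus V(g)$, computation of $R\Gamma_c(V,\mathcal L_\chi)$ concentrated in degree $1$ of rank $m-1$ (tameness plus nontrivial monodromy), and then the Koszul-signed symmetric power giving $\wedge^t$, of dimension $\binom{m-1}{t}$. The only presentational difference is that you invoke the symmetric-product formula as a citation, whereas the paper derives it inline by pulling back along the quotient $V^t \to \mathrm{Sym}^t V$, using $(\rho_*\rho^*\mathbb Q_\ell)^{S_t}=\mathbb Q_\ell$ and K\"{u}nneth; since $\mathbb Q_\ell$-coefficients are characteristic zero, your worry about $\ell$ being coprime to $t!$ is unnecessary.
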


\begin{proof} 

When $c_1=0, c_2=1$, the polynomial $c_1 f+ h+ c_2 T^t = T^t +h$ is monic and has degree $t$. By the proper base change theorem, 
our stalk is thus equivalent to the compactly supported cohomology of the space of degree $t$ monic polynomials that are prime to $g$, with coefficients in $\mathcal L_\chi(T^t + h)$. 
We may view this space as the quotient $\left(\operatorname {Spec} \mathbb F_q [x ,g(x) ^{-1} ] \right)^t /S_t$, and denote by 
\begin{equation}
\rho \colon \left(\operatorname {Spec} \mathbb F_q \left[x ,g(x) ^{-1} \right] \right)^t \to \left(\operatorname {Spec} \mathbb F_q \left[x ,g(x) ^{-1} \right] \right)^t/S_t
\end{equation}
the quotient map. 
Then $(\rho_* \rho^* \mathbb Q_\ell)^{S_t}= \mathbb Q_\ell$, so 
\begin{equation}
\left(\rho_* \rho^* \mathcal L_\chi ( T^t + h)\right)^{S_t} = \mathcal L_\chi(T^t+h)
\end{equation}
and thus
\begin{equation}
\begin{split}
&H^* \left( \left(\operatorname {Spec} \mathbb F_q \left[x ,g(x) ^{-1} \right] \right)^t/S_t ,  \mathcal L_\chi(T^t+h) \right) =  \\
&H^* \left( \left(\operatorname {Spec} \mathbb F_q \left[x ,g(x) ^{-1} \right] \right)^t/S_t ,\rho_* \rho^*   \mathcal L_\chi(T^t+h)\right)^{S_t} = \\
&H^* \left( \left(\operatorname {Spec} \mathbb F_q \left[x ,g(x) ^{-1} \right] \right)^t , \rho^*   \mathcal L_\chi(T^t+h)\right)^{S_t}.
\end{split}
\end{equation}

Now $\rho$ is the map defined by factorizing a polynomial into linear terms, so $\rho^* \mathcal L_\chi( T^t +h) = ( \mathcal L_\chi (T-x) )^{\boxtimes t}$. By the K\"{u}nneth formula, it follows that the stalk of $R \pi_* j_! \mathcal L_\chi ( c_1 f +  h + c_2 T^t)$ at $(0,1)$ is 
\begin{equation}
\left( \left( H^* \left(\operatorname {Spec} \overline{\mathbb F_q}\left[x ,g(x) ^{-1} \right], \mathcal L_\chi (T-x) \right)  \right)^{ \otimes t} \right)^{S_t}.
\end{equation}

Because $\chi$ is nontrivial, $\mathcal L_\chi (T-x)  $ has nontrivial mondromy, so 
\begin{equation}
H^* \left(\operatorname {Spec} \overline{\mathbb F_q} \left[x ,g(x) ^{-1} \right], \mathcal L_\chi (T-x) \right)
\end{equation} 
vanishes in degrees other than $1$. Because it arises from a character sheaf on a torus, $\mathcal L_\chi$ has tame local monodromy, so the rank of this cohomology group in degree $1$ is the Euler characteristic of $\operatorname {Spec} \overline{\mathbb F}_q [x ,g(x) ^{-1} ]$, which is $m-1$. 
Thus 
\begin{equation}
\left( H^* \left(\operatorname {Spec} \overline{\mathbb F_q} \left[x ,g(x) ^{-1} \right], \mathcal L_\chi (T-x) \right)  \right)^{ \otimes t}
\end{equation}
is supported in degree $t$, where it equals the $t$-th tensor power of an $m-1$-dimensional vector space, with $S_t$ acting the usual way, twisted by the sign character because of the Koszul sign in the tensor product of a derived category. Thus taking $S_t$-invariants is equivalent to taking the $t$-th wedge power of this $m-1$-dimensional vector space, which has dimension ${m-1 \choose t}$.\end{proof}

\begin{lem}\label{snc} The complement of $U$ in $\mathbb P^t \times \mathbb A^2$ is a divisor with simple normal crossings relative to $\mathbb A^2$ away from 
\begin{equation}
\left\{ \left(h,(c_1,c_2)\right) \in \mathbb A^t \times \mathbb A^2 :   d \left(\gcd ( c_1f + h+ c_2T^t , g) \right) > t  \right\}.
\end{equation}
\end{lem}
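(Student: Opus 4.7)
The plan is to factor $g$ over $\overline{\mathbb F_q}$ as $g = \prod_{\alpha \in \Omega}(T - \alpha)$ (with $|\Omega| = m$ since $g$ is squarefree), then describe the complement of $U$ in $\mathbb P^t \times \mathbb A^2$ explicitly. On the affine chart $\mathbb A^t \times \mathbb A^2$, this complement is the union of the hyperplanes $D_\alpha = \{h(\alpha) + c_1 f(\alpha) + c_2 \alpha^t = 0\}$ for $\alpha \in \Omega$, since $c_1 f + h + c_2 T^t$ fails to be coprime to $g$ exactly when it vanishes at some root of $g$. Passing to the compactification $\mathbb P^t$, we add one further component: the hyperplane at infinity $H_\infty$ cut out by the extra homogeneous coordinate. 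Each $D_\alpha$ is cut out by a linear form whose coefficient vector in $(h_0, \dots, h_{t-1})$ is $(1, \alpha, \dots, \alpha^{t-1}) \neq 0$ (with the remaining coefficients depending linearly on $c_1, c_2$), so $D_\alpha$ is smooth of relative codimension $1$ over $\mathbb A^2$, as is $H_\infty$.

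To verify that the resulting divisor is relatively SNC over $\mathbb A^2$ at a point $x \in \mathbb P^t \times \mathbb A^2$ outside the stated exceptional locus, I will check that the defining differentials of the components through $x$ are linearly independent in the relative cotangent space. Two cases arise. If $x$ lies in $D_{\alpha_1}, \dots, D_{\alpha_k}$ and not in $H_\infty$, the matrix of partials with respect to $(h_0, \dots, h_{t-1})$ is the $k \times t$ Vandermonde $(\alpha_s^j)_{1 \leq s \leq k,\, 0 \leq j \leq t-1}$; since the $\alpha_s$ are distinct, any $k \leq t$ of its rows are linearly independent. If $x$ lies in $H_\infty$ together with $D_{\alpha_1}, \dots, D_{\alpha_k}$, setting the extra homogeneous coordinate to zero in the defining equations of the $D_{\alpha_s}$ yields the same Vandermonde system on $\mathbb P^{t-1}$, and when $k = t$ the resulting $t \times t$ Vandermonde is invertible, forcing the intersection $H_\infty \cap D_{\alpha_1} \cap \dots \cap D_{\alpha_t}$ to be empty. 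Hence at most $t-1$ of the $D_\alpha$'s can meet $H_\infty$ at a common point, which combined with $H_\infty$ itself gives at most $t$ components with linearly independent differentials through any point of $H_\infty$.

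Combining the two cases, the only points at which relative SNC can fail are points of $\mathbb A^t \times \mathbb A^2$ where strictly more than $t$ of the $D_\alpha$'s meet; by the correspondence between common factors of $c_1 f + h + c_2 T^t$ and $g$ and their common roots in $\Omega$ (using squarefreeness of $g$), this is precisely the locus where $d(\gcd(c_1 f + h + c_2 T^t, g)) > t$ as claimed. The main subtlety is correctly handling the behavior at infinity, where a naive count suggests up to $t$ of the $D_\alpha$'s might accompany $H_\infty$ at a single point; the nonvanishing of the top-dimensional Vandermonde determinant is what rules this out and ensures the boundary contributes no extra exceptional points.
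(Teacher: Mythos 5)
Your approach is essentially the same as the paper's: factor $g$ over $\overline{\mathbb F_q}$, identify the complement of $U$ as the union of hyperplanes $D_\alpha$ together with the hyperplane at infinity $H_\infty$, and check that the intersections of any subset are transversal away from the exceptional locus. The paper verifies this by counting expected dimensions of intersections (which, for an arrangement of relative hyperplanes, is equivalent to simple normal crossings), whereas you argue directly with the differentials via the Vandermonde matrix; these are closely related formulations.

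There is, however, a gap in your treatment of the case $x \in H_\infty$. You show correctly that at most $t-1$ of the $D_\alpha$'s can share a point of $H_\infty$ (since a full $t \times t$ Vandermonde is invertible), and then conclude that the at most $t$ components through $x$ have linearly independent differentials. That final independence does not follow from the count alone. To actually verify it one must check that the $k \leq t-1$ Vandermonde forms, restricted to the tangent space of $\mathbb P^{t-1}$ at the point $[h]$ (or equivalently to an affine chart $\{h_{j_0}=1\}$), remain linearly independent, which is a statement about a Vandermonde with one column omitted — not the same statement you proved. The fix is short: each form $\sum_j h_j\alpha_s^j$ vanishes at the current $h$, so it descends to a linear form on $\overline{\mathbb F_q}^t / \overline{\mathbb F_q}\,h$, and the descent preserves independence since the forms all kill $h$; the independence of the original Vandermonde rows then transfers. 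Once this is added, and noting that $dw$ (for $H_\infty$) is independent of the $dh_j$'s, your argument is complete and gives the same result as the paper's dimension-count argument.
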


\begin{proof} Because this is a purely geometric question, we may assume that $g$ splits completely, and let $\alpha_1,\dots, \alpha_m$ be its roots. Then the complement of $U$ is the union of the hyperplane at $\infty$ in $\mathbb P^t$ with the hyperplanes 
\begin{equation}
c_1 f(\alpha_i) + h(\alpha_i) + c_2 \alpha_i^t = 0, \quad 1 \leq i \leq m. 
\end{equation}
This union has simple normal crossings if the intersection of each subset of our hyperplanes has the expected dimension.

We first consider the case of a subset not including the hyperplane at $\infty$. 
For any $S \subseteq \{1 ,\dots, m\}$, the intersection of the hyperplanes corresponding to the elements of $S$ is 
\begin{equation}
\left\{ \left(h,(c_1,c_2)\right) \in \mathbb A^t \times \mathbb A^2 : \prod_{i \in S} (T-\alpha_i) \ \Big\rvert \ c_1f + h+ c_2T^t  \right\}.
\end{equation}
Since $\prod_{i \in S} (T-\alpha_i)$ is a polynomial of degree $|S|$, 
the above has codimension $|S|$ for any $c_1, c_2$ as long as $|S| \leq t$, because the set of $h$ of degree less than $t$ which are multiples of this polynomial has the expected dimension. On the other hand, when $|S|>t$ we get that 
\begin{equation}
d \left( \gcd ( c_1f + h+ c_2T^t , g) \right) \geq |S| > t,
\end{equation}
so removing these points, we obtain simple normal crossings. 

Now we consider the case where we have a set of hyperplanes corresponding to $S \subseteq \{1,\dots, m\}$ and also the divisor at $\infty$. We can take coordinates on the divisor at $\infty$ to be $(h, (c_1,c_2))$, with $h$ nonzero and well-defined up to scaling. In these coordinates, the equation for the intersection of any hyperplane with the divisor at $\infty$ is its original equation with all terms having degree zero in $h$ removed. Thus the equation for the $i$-th hyperplane, restricted to the divisor at $\infty$, is simply $h(\alpha_i)=0$, so the intersection of the divisor at $\infty$ with the hyperplanes in $S$ consists of $(h,(c_1,c_2))$ where $h$ is a multiple of $\prod_{i \in S} (T-\alpha_i)$. This can only happen if $|S|<t$, as $|S|$ is the degree of this polynomial and $d( h)<t$, but then our intersection always has the expected dimension, so we have simple normal crossings. \end{proof}

\begin{lem}\label{generic-rank} Away from a finite union of lines through the origin, the complex $R \pi_* j_! \mathcal L_\chi$ is supported in degree $t$ with rank ${ m-1 \choose t}$ . \end{lem}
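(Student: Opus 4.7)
The plan is to combine the SNC statement of \cref{snc} with the stalk computation of \cref{l-function-case}, using \cref{scale-invariance} to identify the exceptional locus explicitly.

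First I would let $B' \subseteq \mathbb A^2$ be the image under projection of the bad set appearing in \cref{snc}. For each $S \subseteq \{1,\dots,m\}$ with $|S| > t$, the $S$-contribution to $B'$ is the set of $(c_1,c_2)$ for which the linear system
\begin{equation*}
h(\alpha_i) = -c_1 f(\alpha_i) - c_2 \alpha_i^t, \quad i \in S,
\end{equation*}
is solvable in the $t$ coefficients of $h$. Since the coefficient matrix is Vandermonde on the distinct $\alpha_i$ and hence of full column rank, the consistency condition on the right-hand side is linear and homogeneous in $(c_1,c_2)$, so it cuts out a linear subspace of $\mathbb A^2$ through the origin. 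Taking the union over the finitely many such $S$ exhibits $B'$ as a finite union of such subspaces (in line with \cref{scale-invariance}). The point $(0,1)$ is not in $B'$, since the system there would force a polynomial of degree $<t$ to agree with $-T^t$ on more than $t$ points, which is impossible; hence no component of $B'$ is all of $\mathbb A^2$, so $B'$ lies in a finite union of lines through the origin.

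Second, setting $V := \mathbb A^2 \setminus B'$, \cref{snc} says that the complement of $U$ in $\mathbb P^t \times V$ is a relative simple normal crossings divisor over $V$. Since $\mathcal L_\chi$ is a tame character sheaf, the standard principle that tame ramification along a relative SNC divisor in a proper smooth family kills nearby cycles then implies that $R\pi_* j_! \mathcal L_\chi(c_1 f + h + c_2 T^t)$ is a lisse complex on $V$, with formation compatible with base change.

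Finally, $V$ is the complement in $\mathbb A^2$ of a finite union of lines through the origin and is therefore geometrically connected, so the degree of support and the rank of the cohomology sheaves of a lisse complex on $V$ can be read off any single stalk. By \cref{l-function-case} the stalk at $(0,1) \in V$ is concentrated in degree $t$ with rank ${m-1 \choose t}$, which yields the claim. The main obstacle I anticipate is justifying the lisse-ness step fully rigorously: it rests on the ubiquitous but technically nontrivial principle that tame ramification along a relative SNC divisor eliminates vanishing cycles, so cohomology is locally constant in the parameter $(c_1,c_2)$.
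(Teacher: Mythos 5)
Your argument reaches the same conclusion but by a genuinely different route, and it is essentially correct. The paper pulls back along the single line $l_1:c\mapsto(c,1)$, invokes \cite[XIII Lemma 2.1.11]{sga7-ii} to kill vanishing cycles at $c=0$ (using the observation that the fiber over $(0,1)$ avoids the bad locus of \cref{snc}), so the generic stalk on $l_1$ agrees with the stalk at $(0,1)$ computed in \cref{l-function-case}; it then uses constructibility to upgrade this to a nonempty open set and finally \cref{scale-invariance} to spread that set over its $\mathbb G_m$-orbit, whose complement is a finite union of lines. You instead explicitly compute the image $B'\subset\mathbb A^2$ of the bad locus, observe that for each $S$ with $|S|>t$ the solvability condition is linear homogeneous in $(c_1,c_2)$ because the Vandermonde matrix on the $\alpha_i$ has full column rank, and show that $(0,1)\notin B'$ (a polynomial of degree $<t$ cannot agree with $-T^t$ at $>t$ points), so $B'$ is a finite union of proper linear subspaces; you then assert lisse-ness on $V=\mathbb A^2\setminus B'$ and read the answer off the stalk at $(0,1)$. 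Your version buys an explicit description of the exceptional locus and makes \cref{scale-invariance} essentially superfluous (conicality of $B'$ falls out of homogeneity), but it leans on the stronger, genuinely global principle ``proper + tame + relative SNC $\Rightarrow$ lisse pushforward,'' which, as you flag, needs a reference or a reduction to curves; the paper avoids this by only ever working along a one-parameter family, where the cited SGA~7 lemma applies directly, and then propagating by the soft constructibility/scale-invariance argument. Both yield the lemma; yours is cleaner geometrically, the paper's is lighter on cited technology.
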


\begin{proof} Let $l_1: \mathbb A^1 \to \mathbb A^2$ send $c$ to $(c,1)$. We consider the vanishing cycles  at zero $R \Phi_c l_1^* j_! \mathcal L_\chi (f+h+ \lambda T^t)$ of the pullback of $j_! \mathcal L_\chi (f+h+ \lambda T^t)$ under $l_1$. Let us first check that the complement of $U$ is a simple normal crossings divisor everywhere in the fiber over zero. To do this, we apply \cref{snc} and observe that we cannot have $d ( \gcd ( c_1f + h+ c_2T^t , g)) > t$ in this fiber as $c_1=0,c_2=1$ and 
\begin{equation}
d \left( \gcd (  h+ T^t , g) \right) \leq d ( h +  T^t) \leq t.
\end{equation}
By \cite[XIII Lemma 2.1.11]{sga7-ii} it follows that the vanishing cycles 
\begin{equation}
R \Phi_c l_1^* j_! \mathcal L_\chi (f+h+ \lambda T^t)
\end{equation}
vanish everywhere. Hence the cohomology of the nearby and general fibers is isomorphic. Using \cref{l-function-case} to compute the cohomology of the special fiber, it follows that the cohomology of the general fiber is supported in degree $t$ with rank ${m-1 \choose t}$. By constructibility, the stalk cohomology must have the same description at every point in some nonempty open set. By \cref{scale-invariance}, the same description holds for the stalks in the $\mathbb G_m$-orbit of this open set, which is the complement of finitely many lines. \end{proof}

Let $l_2: \mathbb A^1 \to \mathbb A^2$ send $c$ to $(1,c)$.

\begin{lem}\label{vanishing-cycles} 
Taking vanishing cycles at zero, 
the complex 
\begin{equation}
R \Phi_c  l_2^* j_! \mathcal L_\chi ( f + h + \lambda T^t)
\end{equation}
has the following properties:
\begin{itemize}

\item it is supported on $C = \left\{ (h,0) \in \mathbb A^t \times \mathbb A^1 : d \left( \gcd(f+h,g) \right) > t \right\}$;

\item it is supported in degree $t$;

\item the rank of its stalk at $(h,0) \in C$ is ${d (\gcd(f+h,g)) -1 \choose t}$.

\end{itemize}

\end{lem}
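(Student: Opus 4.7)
The plan is to prove the three assertions sequentially, closely paralleling the proof of \cref{generic-rank}. For the support claim, I would apply \cref{snc} to the pullback along $l_2$: outside $C$, the complement of $U$ remains a relative SNC divisor over $\mathbb A^1$, so \cite[XIII Lemma 2.1.11]{sga7-ii} combined with tameness of $\mathcal L_\chi$ forces $R\Phi_c$ to vanish there, exactly as in \cref{generic-rank}.

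For the stalk description at a fixed $(h_0, 0) \in C$, set $g_0 = \gcd(f + h_0, g)$, $s = d(g_0)$, and $g_1 = g/g_0$. The Chinese Remainder Theorem decomposes $\chi = \chi_0 \chi_1$ and $\mathcal L_\chi = \mathcal L_{\chi_0} \otimes \mathcal L_{\chi_1}$; since $(f + h_0, g_1) = 1$, the factor $\mathcal L_{\chi_1}(f + h + \lambda T^t)$ is lisse in a Zariski open neighborhood of $(h_0, 0)$ and contributes only a one-dimensional twist to $R\Phi_c$. This reduces the computation to the case $g = g_0$ of degree $s$. After translating $\tilde h = h - h_0$ to arrange $g \mid f + h_0$, the polynomial $f + h + \lambda T^t$ reduces mod $g$ to $\tilde h + \lambda T^t$; because $d(\tilde h + \lambda T^t) \leq t < s$, the map $(\tilde h, \lambda) \mapsto \tilde h + \lambda T^t$ identifies $\mathbb A^{t+1}$ linearly with the $(t+1)$-dimensional subspace of polynomials of degree $\leq t$ inside $\mathbb F_q[T]/g$, so the sheaf near the origin is simply the pullback of the character sheaf on the torus along this linear embedding.

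Now the scaling $(\tilde h, \lambda) \mapsto (\epsilon \tilde h, \epsilon \lambda)$, analogous to \cref{scale-invariance}, sends the Milnor fiber at the origin (a small henselian neighborhood restricted to $\{c = \epsilon\}$) onto a large neighborhood in $\{c = 1\}$, up to a one-dimensional character twist by $\chi(\epsilon)$. In the henselian limit this identifies the Milnor-fiber cohomology with the cohomology of the space of monic degree-$t$ polynomials coprime to $g$ with coefficients in $\mathcal L_\chi$. The latter is exactly what \cref{l-function-case} computes, now applied with $g_0$ in place of $g$, so it has rank $\binom{s-1}{t}$ concentrated in degree $t$. Since $(h_0, 0)$ lies in the bad locus, the stalk $(l_2^* j_! \mathcal L_\chi)_{(h_0, 0)}$ vanishes, hence $R\Phi_c$ coincides with $R\Psi_c$ at this point, yielding the claimed stalk.

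I expect the main technical obstacle to be making the local-to-global identification rigorous: the Milnor-fiber cohomology is an étale-local invariant, and its identification with the full cohomology of \cref{l-function-case} requires carefully combining the scaling isomorphism with proper base change for $\pi$, in the spirit of \cref{scale-invariance}.
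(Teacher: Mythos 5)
Your reductions (CRT factorization of $\chi$, the lisse twist by $\mathcal L_{\chi_1}$, translation by $h_0$) and your observation that the stalk of $j_!\mathcal L_\chi$ vanishes at $(h_0,0)$, so $R\Phi_c = R\Psi_c$ there, all match the paper and are correct. The support claim is also handled correctly. But the heart of the argument --- the identification of the nearby-cycles stalk at the origin with the global cohomology of the generic fiber via the scaling map --- is where you have a genuine gap, not just a technical obstacle. Scaling by $\epsilon^{-1}$ is an algebraic automorphism, so it carries a small henselian neighborhood of $(0,\epsilon)$ to a small henselian neighborhood of $(0,1)$; it does not enlarge the Milnor fiber into the whole generic fiber. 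In fact, if your identification held, it would imply that the special fiber has \emph{zero} cohomology in every degree (by the nearby/vanishing cycles triangle), which is a strictly stronger statement than what the paper establishes; nothing in the setup produces this for free.

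The paper's proof splits the claim into two independent pieces that your proposal does not supply. First, to get concentration in degree $t$, one shows that $l_2^* j_! \mathcal L_\chi(f+h+\lambda T^t)[t+1]$ is perverse (semiperversity of the extension-by-zero, semiperversity of the dual via Artin's affine theorem), cites Illusie to conclude that $R\Phi_c$ shifted by one is again perverse, and observes that a perverse sheaf supported on a finite set is a sum of skyscrapers in degree zero. Your proposal has no substitute for this; the scaling heuristic does not by itself force a single cohomological degree. Second, the rank is \emph{not} read off from the generic fiber directly; the paper computes it by the Euler-characteristic identity $\chi(\text{vanishing cycles}) = \chi(\text{generic fiber}) - \chi(\text{special fiber})$ together with the observation that the special fiber has Euler characteristic $0$ (the underlying space of polynomials of degree $< t$ prime to $g'$ carries a free $\mathbb G_m$-action, and the sheaf is lisse rank one and tame, so Grothendieck--Ogg--Shafarevich reduces to the topological Euler characteristic). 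It is only because the vanishing cycles were already shown to live in a single degree that this Euler characteristic determines the rank. To salvage your argument you would have to replace the scaling identification with these two ingredients, or find a genuinely different way to control both the degree and the rank; as written, the perversity step is entirely missing and the scaling step does not prove what it claims.
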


\begin{proof}The first property follows immediately from  \cite[XIII Lemma 2.1.11]{sga7-ii} and \cref{snc}.

We note that $l_2^* j_! \mathcal L_\chi ( f + h + \lambda T^t) [t+1]$ is the extension by zero of a lisse sheaf in degree $- (t+1)$ on a variety of dimension $t+1$ and is thus semiperverse.
The dual complex is the pushforward of a lisse sheaf in degree $-(t+1)$ on a variety of dimension $t+1$ along an affine open immersion and thus is semiperverse by Artin's affine theorem.
We conclude that 
\begin{equation}
l_2^* j_! \mathcal L_\chi ( f + h + \lambda T^t) [t+1]
\end{equation}
is a perverse sheaf.
By \cite[Corollary 4.6]{Illusie}, the vanishing cycles of a perverse sheaf are perverse up to a shift by one, so 
\begin{equation}
R \Phi_c  l_2^* j_! \mathcal L_{\chi} (f + h + \lambda T^t)[t]
\end{equation}
is perverse.

The support of the perverse sheaf above is the closed set $C$. Because $C$ does not intersect the divisor at $\infty$, $C$ is finite. 
A perverse sheaf supported at a finite set is necessarily a sum of skyscraper sheaves supported in degree zero, 
so we obtain the second property in the statement of this lemma.

It remains to calculate the rank of the stalk at a particular point $(h_0,0) \in C$.
We do that by working locally in an \'{e}tale neighborhood. 

We set
\begin{equation}
g' = \gcd( f+h_0, g), \quad g^*= g/ g',
\end{equation}
and factor $\mathcal T$ as the product of the torus $\mathcal T'$ of residue classes mod $g'$ and the torus $\mathcal T^*$ of residue classes mod $g^*$. 
This lets us factor $\mathcal L_\chi$ as the tensor product of $\mathcal L_{\chi'}$ (the pullback of a character sheaf from $\mathcal T'$) and $\mathcal L_{\chi^*}$ (the pullback of a character sheaf from $\mathcal T^*$). 
Because $f+h_0$ is relatively prime to $g^*$, the map $U \to \mathcal T \to \mathcal T^*$ extends to a well-defined map in a neighborhood of the point $(h_0,0)$, so $\mathcal L_{\chi^*} ( c_1 f+ h+ c_2 T^t)$ extends to a lisse sheaf in a neighborhood of $(h_0,0)$. 
Because tensoring with a lisse rank one sheaf does not affect vanishing cycles, it suffices to calculate 
\begin{equation}
R \Phi_c  l_2^* j'_! \mathcal L_{\chi'} (c_1 f + h + c_2 T^t), 
\end{equation}
where $j'$ is the inclusion of the open set where $\gcd( c_1 f+ h + c_2 T^t, g')=1$. 

By changing variables, we may replace $(f,h)$ with $f',h'$ where $f' =f+h_0$ and $h'=h-h_0$ - we are then tasked with calculating the vanishing cycles $R \Phi_c  l_2^* j'_! \mathcal L_{\chi'} ( c_1 f' + h' + c_2 T^t)$ at zero. Having done this, we observe that $f'$ is a multiple of $g'$, so translation by $f'$ does not affect $\mathcal L_{\chi'} ( c_1 f' + h' + c_2 T^t)$, thus these vanishing cycles are the same as $R \Phi_c  l_2^* j'_! \mathcal L_{\chi'} ( h' + c_2 T^t)$.

As $d(  h' + c_2 T^t) \leq t$, we can only have 
\begin{equation}
d \left( \gcd( h'+ c_2 T^t, g') \right)>t
\end{equation}
if $h'+ c_2 T^t=0$. 
Hence, by \cref{snc}, the complement of the image of $j'$ is a simple normal crossings divisor way from the point $(0,0)$. 
Therefore, by \cite[XIII Lemma 2.1.11]{sga7-ii}, 
the vanishing cycles are supported at this point. 
By the vanishing cycles long exact sequence, the Euler characteristic of the vanishing cycles complex is the difference between the Euler characteristic of the generic fiber and the Euler characteristic of the special fiber. Because the vanishing cycles sheaf is supported at a single point in a single degree, its Euler characteristic is $(-1)^t$ times its rank at that point. We will calculate these Euler characteristics and thereby calculate the rank.

By \cref{generic-rank}, the Euler characteristic of the generic fiber is $(-1)^t { d( g') - 1 \choose t } $. So it remains to check the Euler characteristic at the special point is zero. Because $\mathcal L_\chi$ is lisse of rank one and tame, the Euler characteristic of the special fiber is the Euler characteristic of the space of polynomials of degree less than $t$ and prime to $g'$. Because this admits a free action of $\mathbb G_m$ by scaling, its Euler characteristic is zero. 
\end{proof}

We can now prove \cref{main-character-sum}.

\begin{proof}

We have a vanishing cycles long exact sequence
\begin{equation*}
\begin{split}
(R \pi_* j_! \mathcal L_\chi ( c_1 f +  h + c_2 T^t))_{(1,0)} &\to (R \pi_* j_! \mathcal L_\chi ( c_1 f +  h + c_2 T^t) )_{(1,\eta)} \\
&\to H^* (\mathbb P^t,  R \Phi_c  l_2^* j_! \mathcal L_\chi ( f + h + \lambda T^t) ).
\end{split}
\end{equation*}

By Lemma \ref{generic-rank}, $(R \pi_* j_! \mathcal L_\chi ( c_1 f +  h + c_2 T^t) )_{(1,\eta)} $ is supported in degree $t$ with rank ${ m-1 \choose t}$. 
By Lemma \ref{vanishing-cycles}, the complex $R \Phi_c  l_2^* j_! \mathcal L_\chi ( f + h + \lambda T^t) $ is supported in degree $t$ and at finitely many points, so the third term above is also supported in degree $t$ and is simply the sum of the stalks at those points, and thus has rank
\begin{equation} \label{RkBoundEq}
r(f, g, t) = \sum_{\substack{ h \in\overline{ \mathbb F_q}[T] \\ d( h )< t \\ d\left( \gcd(f+h,g) \right)>t  }} {d\left( \gcd(f+h,g) \right)-1 \choose t},
\end{equation}
again using Lemma \ref{vanishing-cycles}.

We conclude that, upon suppressing $c_1 f +  h + c_2 T^t$ for brevity, the vanishing cycles long exact sequence becomes
\begin{equation}
\begin{split}
0 &\to (R^t \pi_* j_! \mathcal L_\chi )_{(1,0)} \to  (R^t \pi_* j_! \mathcal L_\chi)_{(1,\eta)} \\
&\to H^t (\mathbb P^t,  R \Phi_c  l_2^* j_! \mathcal L_\chi ) \to ( R^{t+1}  \pi_* j_! \mathcal L_\chi )_{(1,0)}\to 0.
\end{split}
\end{equation} 
Thus $(R \pi_* j_! \mathcal L_\chi ( c_1 f +  h + c_2 T^t))_{(1,0)} $ is supported in degrees $t$ and $t+1$, 
with rank at most ${ m-1 \choose t}$ in degree $t$ and rank bounded by $r(f, g, t)$ in degree $t + 1$. 
By Deligne's theorem, the absolute values of the eigenvalues of Frobenius on the $i$-th cohomology group are at most $q^{i/2}$, so the absolute value of the trace of Frobenius on cohomology is at most 
\begin{equation}
{m-1 \choose t}  q^{\frac{t}{2} } + r(f, g,t) q^{\frac{t+1}{2}}.
\end{equation}

By the Lefschetz fixed point formula, the trace of Frobenius on cohomology equals the sum of the trace of Frobenius on the stalks, which is \begin{equation}
\sum_{\substack{ h \in \mathbb F_q[T] \\ d( h) < t \\ \gcd(f+h,g)=1 }} \chi(f+h).
\end{equation}

Finally, we check that $r(f,g,t) \leq { m-1 \choose t}$.
To do this fix a root $\alpha$ of $g$, 
and note that $ {d( \gcd(f+h,g))-1 \choose t}$ does not exceed the number of degree $t$ divisors of $g$, prime to $T-\alpha$, that divide $f+h$. Each such divisor of $g$ prime to $T-\alpha$ contributes at most once to the sum in \cref{RkBoundEq}, 
so this sum is bounded by the number of such divisors, which is ${ m-1 \choose t} $. 
\end{proof}

\begin{cor} \label{FFCSCor}

Fix $\eta > 0$ and $0< \beta<1/2$.
Then for a prime power $q \geq ( e \eta^{-1})^{\frac{2}{1-2\beta} }  $  the following holds.
For a nonprincipal character $\chi$ to a squarefree modulus $g \in \F_q[T]$, $f \in \F_q[T]$, 
and $t \geq \eta \cdot d(g)$, we have
\begin{equation}
\sum_{d(h) < t} \chi(f + h) \ll q^{ \left(1- \beta \right) t}
\end{equation}
as $t  \to \infty$, with the implied constant depending only on $q$. 

Furthermore, if we have $t \leq \eta \cdot d(g) \leq t'$, then we still have 

\begin{equation}
\sum_{d(h) < t} \chi(f + h) \ll q^{ \left(1- \beta \right) t'}.
\end{equation}

\end{cor}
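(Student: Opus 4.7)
The strategy is to apply \cref{main-character-sum} and control the resulting binomial factor. Setting $m = d(g)$, since $\chi(f+h)=0$ whenever $\gcd(f+h,g)\neq 1$, the sum in question coincides with the restricted sum in \cref{main-character-sum}, so
\begin{equation*}
\left|\sum_{d(h)<t}\chi(f+h)\right| \leq (q^{1/2}+1)\binom{m-1}{t}q^{t/2} \leq (q^{1/2}+1)\left(\frac{em}{t}\right)^t q^{t/2},
\end{equation*}
where the last estimate uses the elementary bound $\binom{m-1}{t}\leq(em/t)^t$.

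For the first assertion, the hypothesis $t\geq \eta\, d(g)=\eta m$ gives $em/t\leq e/\eta$, while the condition $q\geq (e\eta^{-1})^{2/(1-2\beta)}$ rewrites as $e/\eta\leq q^{1/2-\beta}$. Multiplying by $q^{t/2}$ then shows that $(em/t)^t q^{t/2}\leq q^{(1-\beta)t}$, and absorbing the factor $(q^{1/2}+1)$ into the implied constant yields the desired estimate.

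For the second assertion, I would instead use $m\leq t'/\eta$, which follows from $\eta\, d(g)\leq t'$. Then $em/t\leq (e/\eta)(t'/t)$, and the same manipulation reduces the problem to showing $(t'/t)^t\leq q^{(1-\beta)(t'-t)}$. Writing $u=t'/t\geq 1$, this becomes $\log_q u\leq (1-\beta)(u-1)$, which holds at $u=1$ with equality and whose derivatives satisfy $1-\beta-\tfrac{1}{u\ln q}\geq 0$ for all $u\geq 1$ as soon as $\ln q\geq 1/(1-\beta)$. The latter is a consequence of the hypothesis on $q$, since $(e/\eta)^{2/(1-2\beta)}\geq e^{2/(1-2\beta)}\geq e^{1/(1-\beta)}$, using $\beta<1/2$.

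The only step requiring genuine care is the second assertion, where $t$ may be much smaller than the budget $t'$; the point is that convexity of $(1-\beta)(u-1)-\log_q u$ lets a single derivative comparison at $u=1$ suffice to push the inequality through globally.
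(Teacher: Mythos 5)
Your proof of the first assertion is essentially the paper's: apply \cref{main-character-sum} with $m = d(g)$, identify the restricted and unrestricted sums (since $\chi$ vanishes off $(\F_q[T]/g)^\times$), bound the binomial by $(em/t)^t$, and combine $em/t \leq e/\eta \leq q^{1/2-\beta}$ with the remaining factor $q^{t/2}$. The paper establishes $\binom{m}{t} \leq (m/t)^t e^t$ by hand via a Taylor-series estimate rather than citing the standard inequality $\binom{m-1}{t} \leq (em/t)^t$, but the content is identical.

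For the second assertion your route genuinely differs. The paper first reduces to $t' \leq m$, then observes that the intermediate bound $(m/t)^{t} e^t q^{t/2}$ is an increasing function of $t$ on $[0,m]$ (its logarithm has derivative $\log m - \log t + \tfrac{1}{2}\log q > 0$), so the estimate at parameter $t$ is dominated by the estimate at $t'$, which was already shown to be $\leq q^{(1-\beta)t'}$. You instead substitute $m \leq t'/\eta$ directly, carry the factor $(t'/t)^t$ through, and close by checking $(t'/t)^t \leq q^{(1-\beta)(t'-t)}$ via a derivative comparison at $u = t'/t = 1$. Both arguments are correct; yours trades the paper's monotonicity computation for a separate elementary inequality. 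One caveat shared by both proofs (so not a flaw unique to yours): your step $(e/\eta)^{2/(1-2\beta)} \geq e^{2/(1-2\beta)}$, used to guarantee $\ln q \geq 1/(1-\beta)$, tacitly assumes $\eta \leq 1$; the paper's reduction ``we may assume $t' \leq m$'' forces the same restriction, since $\eta m \leq t' \leq m$ implies $\eta \leq 1$. This is harmless in all applications, but worth flagging.
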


\begin{proof} If $\eta \geq 1$, the left hand side vanishes and the bound is trivial. 
Otherwise, we apply \cref{main-character-sum} to the left side, taking $m = d(g)$, to obtain
\begin{equation}
\begin{split}
\sum_{d(h) < t} \chi(f + h) &\leq (q^{1/2+1}) {m-1 \choose t} q^{t/2} \\
&\ll {m \choose t} q^{t/2} \leq \left(\frac{t}{m}\right)^{-t} \left(  \frac{m-t}{m} \right)^{t-m}  q^{t/2}
\end{split}
\end{equation}
where the last inequality follows from 
\begin{equation}
\begin{split}
1 = \left( \frac{t}{m} + \frac{m-t}{m} \right)^m &= \sum_{k=0}^m{m \choose k}   \left(\frac{t}{m}\right)^{k} \left(  \frac{m-t}{m} \right)^{m-k} \\ &\geq {m \choose t}   \left(\frac{t}{m}\right)^{t} \left(  \frac{m-t}{m} \right)^{m-t}.
\end{split}
\end{equation}

From the Taylor series we can see that $-\log (1-x) \leq x/(1-x)$ if $x>0$ so $(1-x)^{ - (1-x)/x}  \leq e$. 
Applying this to $x = t/m$, we get 
\begin{equation}
\left(  \frac{m-t}{m} \right)^{t-m} \leq e^t
\end{equation}
so we obtain
\begin{equation}
\sum_{d(h) < t} \chi(f + h)  \ll \left(\frac{t}{m}\right)^{-t}  e^t  q^{t/2}.
\end{equation}

Because $q \geq (e \eta^{-1})^{\frac{2}{1 - 2\beta} }$ and  $t/m \geq \eta$, we have 
\begin{equation}
e  \left(\frac{t}{m}\right)^{-1} \leq e \eta^{-1} \leq q^{ \frac{1}{2} - \beta}
\end{equation}
hence
\begin{equation} \label{InterStepEq}
\left(\frac{t}{m}\right)^{-t}  e^t  q^{t/2} \leq q^{ (1 -\beta) t},
\end{equation}
as desired.

To handle the case where $t \leq \eta \cdot d(g) \leq t'$, first note that we may assume $t' \leq m$. 
We observe that the left hand side of \cref{InterStepEq} is an increasing function of $t$ because its logarithm 
\begin{equation}
t \log m - t\log t + t+ t (\log q)/2
\end{equation} 
has derivative 
\begin{equation}
\log m - \log t + (\log q)/2
\end{equation} 
which is positive in the range $t\leq m$. Thus we can get a bound for the shorter sum which is at least as good as our bound for the longer sum.
\end{proof} 

\section{The M\"{o}bius Function}

From now on, we will assume that the characteristic $p$ of $\F_q$ is odd. Because of this, $\F_q^\times$ admits a unique quadratic character, which we denote $\psi$. We use freely the basic properties of resultants (see \cite{Jan07}) and the Jacobi symbol (see \cite[Chapter 3]{Ros13}).

The following lemma recalls the relation between the (real valued) Jacobi symbol and the quadratic character of a resultant. 

\begin{lem} \label{JacobiLem}
Let $f \in \F_q[T]$, $g = a_nT^t + \dots + a_0$ of degree $n \geq 1$. Then
\begin{equation} \label{AimJacobiLemEq}
\left( \frac{f}{g} \right) = \psi(a_n)^{\max\{d(f),0\}} \psi \left( \mathrm{Res}(g,f) \right).
\end{equation} 
\end{lem}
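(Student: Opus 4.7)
The plan is to reduce to the monic case by extracting the leading coefficient, invoke the standard identification of the Jacobi symbol with a norm (which equals a resultant for monic moduli), then track how the non-monic correction enters via multiplicativity of $\mathrm{Res}$ in its first argument.

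First I would write $g = a_n \tilde g$ with $\tilde g \in \F_q[T]^+$ monic of degree $n$. Since the Jacobi symbol $(f/g)$ is defined via the monic prime factorization of the denominator, a unit factor contributes trivially and $(f/g) = (f/\tilde g)$. Next I would prove the clean monic identity $(f/\tilde g) = \psi(\mathrm{Res}(\tilde g, f))$. By the multiplicativity of both sides in the denominator (using $\mathrm{Res}(g_1 g_2, f) = \mathrm{Res}(g_1, f)\,\mathrm{Res}(g_2, f)$), it suffices to treat the case $\tilde g = P$ monic irreducible of degree $d$. Here the Legendre symbol is $(f/P) = \psi_P(f \bmod P)$ where $\psi_P$ is the quadratic character of $(\F_q[T]/P)^\times$, and using $\psi_P = \psi \circ \mathrm{Nm}_{\F_q[T]/P \,/\, \F_q}$ together with $\mathrm{Nm}(f \bmod P) = \prod_{P(\alpha)=0} f(\alpha) = \mathrm{Res}(P, f)$ (valid because $P$ is monic), one obtains $(f/P) = \psi(\mathrm{Res}(P, f))$.

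To put the leading coefficient back in, I would use $\mathrm{Res}(g, f) = \mathrm{Res}(a_n, f) \cdot \mathrm{Res}(\tilde g, f) = a_n^{d(f)} \mathrm{Res}(\tilde g, f)$ for $f \neq 0$ (invoking the convention $\mathrm{Res}(c, f) = c^{d(f)}$ for constants $c$). Applying $\psi$ and using $\psi^2 = 1$ yields
\begin{equation}
\psi(\mathrm{Res}(g, f)) = \psi(a_n)^{d(f)} \psi(\mathrm{Res}(\tilde g, f)) = \psi(a_n)^{d(f)} \left( \frac{f}{g} \right),
\end{equation}
which rearranges to \eqref{AimJacobiLemEq} when $d(f) \geq 0$. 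The edge case $f = 0$ I would dispatch at the start: then $\gcd(f, g) = g$ has positive degree, so $(f/g) = 0$; also $\mathrm{Res}(g, 0) = 0$ (since $n \geq 1$), and $\max\{d(f), 0\} = 0$ makes the correction factor trivial, so both sides vanish. I do not expect any real obstacle here: the argument is a matter of matching conventions at the prime level and then carefully tracking the leading coefficient in the non-monic extension.
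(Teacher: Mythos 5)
Your proof is correct and takes essentially the same route as the paper: both reduce by multiplicativity to the case of $g$ irreducible and then identify the Legendre symbol with the quadratic character of the residue field applied to a norm, which is exactly the resultant. The paper carries the leading coefficient $a_n$ through the resultant formula and verifies the identity $\psi_P = \psi\circ\mathrm{Nm}$ inline via Euler's criterion mod $p$, whereas you extract $a_n$ cleanly at the outset to reduce to a monic modulus and quote the norm identification as standard, also explicitly dispatching the $f=0$ edge case, which the paper leaves implicit.
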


\begin{proof}
 
Fix $f \neq 0$, and note that both sides above are completely multiplicative in $g$,
so we assume that $g$ is irreducible. 
For a root $\theta$ of $g$ we have
\begin{equation} \label{GaloisEq}
\mathrm{Res}(g,f) = a_n^{d(f)}\prod_{i=0}^{d(g)-1}f\big(\theta^{q^{i}}\big) = a_n^{d(f)}\prod_{i=0}^{d(g)-1}f(\theta)^{q^{i}} = a_n^{d(f)}f(\theta)^{\frac{q^{d(g)}-1}{q-1}}
\end{equation}
so we get the mod $p$ congruence
\begin{equation}
\begin{split}
\psi \left( \mathrm{Res}(g,f) \right) &= \psi(a_n)^{d(f)} \psi \left( f(\theta)^{\frac{q^{d(g)}-1}{q-1}} \right) \\ 
&\equiv \psi(a_n)^{d(f)} f(\theta)^{\frac{q^{d(g)}-1}{2}} \equiv 
\psi(a_n)^{d(f)} \left(\frac{f}{g} \right)
\end{split}
\end{equation}
which suffices for the lemma.
\end{proof}

Given a $D \in \F_q[T]$ we write $\mathrm{rad}(D)$ for the product of primes that appear in the factorization of $D$,
and $\mathrm{rad}_1(D)$ for the product of primes that appear with odd multiplicity in the factorization of $D$. 
The derivative of $D$ (with respect to $T$) is denoted by $D'$.

The next lemma interprets the M\"{o}bius function (on an arithmetic progression) as a Dirichlet character that `depends only on the derivative'.

\begin{lem} \label{PreparingMobiusToArithProgLem}

Let $m,k \geq 0$, $d \geq 1$ be integers with $k \neq d + m$.
For $M \in \mathcal{M}_m$, $g \in \mathcal{M}_d$, and $a \in \mathcal{M}_k$ coprime to $M$, define the polynomials
\begin{equation} \label{DefDEEq}
D = M^2\left(g + \frac{a}{M} \right)', 
\ E = \frac{\mathrm{rad}(D)}{\gcd \left(M, \mathrm{rad}(D) \right)},  
\ E_1 = \frac{\mathrm{rad}_1(D)}{\gcd \left(M, \mathrm{rad}_1(D) \right)}.
\end{equation}
Then
\begin{equation} \label{MobiusOnArithProgEq}
\mu(a + gM) = S \cdot \chi(w + g)
\end{equation}
where $w = w_{a,M,g'} \in \F_q[T]$, $\chi = \chi_{a,M,g'}$ is a (real) multiplicative character mod $E$ with conductor $E_1$,
and 
\begin{equation}
S = S_{d,a,M,g'} \in \{0,1,-1\}
\end{equation} 
with $S = 0$ if and only if $D = 0$.

\end{lem}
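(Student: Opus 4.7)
The plan is to exploit the classical identification in odd characteristic of the M\"obius function of a squarefree polynomial with a quadratic character of its discriminant, together with the observation that modulo $f := a + gM$ the derivative $f'$ can be replaced by a polynomial $D$ depending only on $g'$.

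The algebraic engine is the identity
\begin{equation*}
Mf' - M'f \;=\; M^2 \left( g + \tfrac{a}{M} \right)' \;=\; D,
\end{equation*}
obtained by direct differentiation of $f = M(g + a/M)$. I use it in two ways. First, to dispense with $D=0$: this forces $Mf' = M'f$, and since $\gcd(f, M) = \gcd(a, M) = 1$ we get $M \mid M'$, hence $M' = 0$ by degree considerations, hence $f' = 0$, hence $f$ is a $p$-th power and not squarefree. So $\mu(f) = 0$ and the statement holds with $S = 0$. Henceforth assume $D \neq 0$. Second, the congruence $Mf' \equiv D \pmod{f}$ together with the monicity of $f$ yields $\mathrm{Res}(f, Mf') = \mathrm{Res}(f, D)$, and multiplicativity of the resultant gives
\begin{equation*}
\mathrm{Res}(f, f') \;=\; \mathrm{Res}(f, D)\, /\, \mathrm{Res}(f, M),
\end{equation*}
where, using $f \equiv a \pmod{M}$ and the symmetry of the resultant, the denominator equals $(-1)^{nm}\mathrm{Res}(M, a)$ (with $n := d + m$), a nonzero quantity independent of $g$.

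Suppose now that $f$ is squarefree. The standard identities $\mu(f) = (-1)^n \psi(\mathrm{disc}(f))$ and $\mathrm{disc}(f) = (-1)^{n(n-1)/2} \mathrm{Res}(f, f')$ combine with the previous step to write $\mu(f)$ as an explicit $g$-independent sign times $\psi(\mathrm{Res}(f, D)) \cdot \psi(\mathrm{Res}(M, a))$. \cref{JacobiLem} (applied with $f$ monic, so the leading-coefficient factor is trivial) identifies $\psi(\mathrm{Res}(f, D))$ with the Jacobi symbol $\left(\frac{D}{f}\right) = \left(\frac{\mathrm{rad}_1(D)}{f}\right)$. Factoring $\mathrm{rad}_1(D) = A \cdot E_1$ with $A := \gcd(M, \mathrm{rad}_1(D))$, the $A$-piece reduces to $\left(\frac{A}{a}\right)$ (constant in $g$) because $A \mid M$ and $f \equiv a \pmod M$. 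For the $E_1$-piece, $\gcd(E_1, M) = 1$ lets me choose $w \in \F_q[T]$ with $Mw \equiv a \pmod{E_1}$, so that $f \equiv M(g+w) \pmod{E_1}$ and multiplicativity yields
\begin{equation*}
\left(\frac{E_1}{f}\right) \;=\; \left(\frac{E_1}{M}\right) \cdot \left(\frac{E_1}{g+w}\right).
\end{equation*}
Defining $\chi(h) := \left(\frac{E_1}{h}\right)$, a quadratic character mod $E$ of conductor $E_1$ since $E_1$ is squarefree and divides $E$, and collecting every constant factor into $S \in \{\pm 1\}$ gives the desired formula $\mu(f) = S \cdot \chi(w+g)$.

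It remains to treat $D \ne 0$ with $f$ non-squarefree: then some prime $P$ has $P^2 \mid f$, hence $P \mid f'$, so by $Mf' - M'f = D$ and $P \nmid M$ (from $\gcd(f, M) = 1$) we get $P \mid D$, hence $P \mid E$. The congruence $f \equiv M(g+w) \pmod{P}$ combined with $P \mid f$ and $P \nmid M$ forces $P \mid g + w$, so $\chi(w+g) = 0$, matching $\mu(f) = 0$. The main technical obstacle is the bookkeeping in the squarefree case: verifying that each of the signs $(-1)^n$, $\psi(-1)^{n(n-1)/2}$, $(-1)^{nm}$, $\psi(\mathrm{Res}(M, a))$, $\left(\frac{A}{a}\right)$, and $\left(\frac{E_1}{M}\right)$ depends only on $d, a, M$, and $g'$ (and not on $g$ itself), so that they genuinely combine into a single sign $S$ of the advertised shape, and confirming that $E_1$ is really the conductor of $\chi$ rather than some proper divisor.
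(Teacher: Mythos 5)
Your overall strategy is essentially the same as the paper's: both start from Pellet's formula $\mu(f) = (-1)^{\deg f}\psi(\mathrm{Disc}(f))$, exploit the identity $Mf' - M'f = D$ to replace $f'$ by the $g$-independent $D$ inside a Jacobi symbol, and then peel off the part of the modulus divisible by $M$. The cosmetic difference is that the paper applies quadratic reciprocity to pass to $\left(\tfrac{f}{D}\right)$ (a character of $f$ mod $\mathrm{rad}(D)$) and then splits via CRT, whereas you factor $\mathrm{rad}_1(D) = A\cdot E_1$ directly. Both lead to the same place in the squarefree case.

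However, there is a genuine gap in your treatment of the case where $D \neq 0$ and $f$ is \emph{not} squarefree. You choose $w$ only by the condition $Mw \equiv a \pmod{E_1}$, and you define $\chi(h) = \left(\tfrac{E_1}{h}\right)$, which is really the primitive character mod $E_1$. Now suppose $P^2 \mid\mid f$ with an odd exponent $\geq 3$ and $p \nmid \deg$-related obstruction: then $P$ divides $D$ with even multiplicity, so $P \mid E$ but $P \nmid E_1$. Your congruence $f \equiv M(g+w)$ then holds mod $E_1$ but not mod $P$, so you cannot conclude $P \mid g+w$; and even if you could, $\left(\tfrac{E_1}{g+w}\right)$ would not vanish because $P \nmid E_1$. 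Thus $\chi(w+g)$ need not be $0$ while $\mu(f) = 0$, and the claimed identity fails. The same defect is hidden in your squarefree-case step $\left(\tfrac{D}{f}\right) = \left(\tfrac{\mathrm{rad}_1(D)}{f}\right)$: that equality is correct only because $\gcd(f,D)=1$ in the squarefree case, and discarding the ``square part'' of $D$ loses exactly the primes that are needed to force vanishing in the non-squarefree case. The fix is easy and restores the argument: choose $w$ with $Mw \equiv a \pmod{E}$ (possible since $\gcd(M,E)=1$), and take $\chi$ to be the character \emph{mod $E$} induced from the primitive character mod $E_1$, i.e. $\chi(h) = \left(\tfrac{E_1}{h}\right)$ when $\gcd(h,E)=1$ and $\chi(h)=0$ otherwise. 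This is precisely what the paper's use of $\left(\tfrac{f}{D}\right)$ with the full modulus $\mathrm{rad}(D)$ achieves automatically. You should also note that $n$ must be $\deg f = \max\{k, d+m\}$, not $d+m$, when $k > d+m$; this only affects sign bookkeeping you already flagged.
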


\begin{proof}

Pellet's formula (see \cite[Lemma 4.1]{Con05}) gives
\begin{equation} \label{PelletEq}
\mu(a + gM) = (-1)^{d(a + gM)}\psi \left( \mathrm{Disc}(a + gM) \right)
\end{equation}
and since $d(a + gM) = \max\{k, d+m\}$, we see that $(-1)^{d(a + gM)}$ can be absorbed into $S$.
Our assumption that $k \neq d + m$ implies $a + gM$ is monic,
so $\psi \left( \mathrm{Disc}(a + gM) \right)$ equals, up to a sign that $S$ absorbs,
\begin{equation}
\psi \left( \mathrm{Res}(a + gM, a' + g'M + gM') \right).
\end{equation}
By \cref{JacobiLem}, and the fact that $a + gM$ is monic, the above equals
\begin{equation}
\left( \frac{a' + g'M + gM'}{a + gM} \right)
\end{equation}
and since $\gcd(a,M) = 1$ by multiplicativity, this equals
\begin{equation}
\left( \frac{a' M + g'M^2 + gM M'}{a + gM} \right)\
\left( \frac{M}{a + gM} \right)^{-1} .
\end{equation}

Using quadratic reciprocity, we can absorb $\left( \frac{M}{a + gM} \right)$ into $S$.
Subtracting $M'(a + gM)$ from the numerator of the first Jacobi symbol above, we get (in the notation of equation \eqref{DefDEEq})
\begin{equation}
\left( \frac{D}{a + gM} \right)
\end{equation}
and set $w = 0$, $S = 0$ in case $D=0$.
Otherwise (if $D \neq 0$) we apply quadratic reciprocity once again to obtain
\begin{equation}
\left( \frac{a + gM}{D} \right)
\end{equation}
up to a sign that goes into $S$.

We write $\xi(a + gM)$ for the Jacobi symbol above, 
so that $\xi$ is a multiplicative character mod $\mathrm{rad}(D)$ with conductor $\mathrm{rad}_1(D)$.
Since $\mathrm{rad}(D)$ is squarefree, 
we see that $M$ is coprime to $E$ (from equation \eqref{DefDEEq}).
Therefore, by the Chinese remainder theorem, there exists a unique decomposition $\xi = \xi_E \xi_N$ to characters mod $E$ and $N = \gcd \left(M, \mathrm{rad}(D) \right)$ respectively.
In this notation, our Jacobi symbol equals $\xi_E(a + gM)\xi_N(a + gM)$
and the second factor is simply $\xi_N(a)$, so we immerse it in $S$.
Taking
$\overline M \in \F_q[T]$ with  $\overline M M \equiv 1 \ (E)$
we can write
\begin{equation}
\xi_E(a + gM) = \xi_E(a\overline M + g)\xi_E(M)
\end{equation}
and conclude by setting $w = a\overline M, \ \chi = \xi_E$, and dumping $\xi_E(M)$ into $S$.
\end{proof}

\begin{remark} \label{PreparingMobiusToArithProgRem}
With notation as in \cref{PreparingMobiusToArithProgLem}, suppose that $\chi$ is principal.
We can then write $D = AB^2$ for some $A,B \in \F_q[T]$ with $A \mid M$,
simply by taking $A = \mathrm{rad}_1(D)$ since principality gives $E_1 = 1$.
\end{remark}

\section{Linear forms in M\"{o}bius}

\begin{prop} \label{DerCongProp}

Let $p$ be a prime, let $q$ be a power of $p$, let $m,d$ be nonnegative integers, 
let $M \in \mathcal{M}_m$ be squarefree, and let  $a \in \mathbb{F}_q[T]$. Then
\begin{equation*}
\frac{\# \left\{h : h = g' \ \text{for some} \ g \in \mathcal{M}_d, \ h \equiv a \ \mathrm{mod} \ M \right\}}
{\# \left\{g' : g \in \mathcal{M}_d \right\}} \leq q^{- \min \left\{m, \left\lfloor \frac{d-1}{p} \right\rfloor \right\}}.
\end{equation*}

\end{prop}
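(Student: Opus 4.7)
My plan is to reduce the counting question to a linear algebra problem and then exploit squarefreeness of $M$ by restricting to the subring $\mathbb F_q[T^p] \subset \mathbb F_q[T]$.

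First, two polynomials in $\mathcal M_d$ share a derivative iff their difference lies in $\mathbb F_q[T^p] \cap \mathbb F_q[T]_{<d}$, so the derivative map on $\mathcal M_d$ has fibers of constant size $q^{\lfloor(d-1)/p\rfloor+1}$, and the ratio in the proposition rewrites as $q^{-d} \#\{g \in \mathcal M_d : g' \equiv a \pmod M\}$. The assignment $g \mapsto g' \bmod M$ is affine-linear from the $d$-dimensional affine $\mathbb F_q$-space $\mathcal M_d$ into $\mathbb F_q[T]/M$, and its nonempty fibers all have size $q^{d - \dim_{\mathbb F_q}\phi(V)}$, where $V := D(\mathbb F_q[T]_{<d})$ is the image of differentiation and $\phi \colon V \to \mathbb F_q[T]/M$ is reduction. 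Thus the proposition reduces to the estimate $\dim_{\mathbb F_q} \phi(V) \ge \min\{m,\lfloor(d-1)/p\rfloor\}$.

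For this lower bound I would restrict $\phi$ to the distinguished subspace $V_0 := V \cap \mathbb F_q[T^p] = \mathbb F_q[T^p]_{\le d-2}$, of dimension $\lfloor(d-2)/p\rfloor + 1$, and invoke the key structural claim that inclusion induces a ring isomorphism
\[
\mathbb F_q[T^p] \big/ \bigl(\mu(T^p)\bigr) \;\xrightarrow{\sim}\; \mathbb F_q[T]/M, \qquad \mu(S) := \prod_{i=1}^m (S - \alpha_i^p),
\]
where $\alpha_1, \dots, \alpha_m \in \overline{\mathbb F_q}$ are the roots of $M$. Squarefreeness is essential here: it forces the $\alpha_i$ to be distinct, and injectivity of Frobenius then keeps the $\alpha_i^p$ distinct, so $\mu$ has $S$-degree exactly $m$; both sides of the displayed map have cardinality $q^m$, so the obvious injection is bijective.

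Through this isomorphism $\phi|_{V_0}$ becomes reduction modulo $\mu(T^p)$, a polynomial of $T$-degree $pm$; a short case split then yields $\dim_{\mathbb F_q}\phi(V_0) = \min\{\lfloor(d-2)/p\rfloor + 1,\, m\}$, since the reduction is injective when $d \le pm + 1$ and, once $d \ge pm - p + 2$, the subspace $V_0$ already contains $1, T^p, \dots, T^{(m-1)p}$ whose images span the target (the inequality $pm - p + 2 \le pm + 1$ ensures these ranges cover every $d$). Combined with the elementary bound $\lfloor(d-2)/p\rfloor + 1 \ge \lfloor(d-1)/p\rfloor$, this delivers the required estimate. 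The step I expect to require the most care is the structural isomorphism above: without squarefreeness the $\alpha_i^p$ would collide and $\deg_S \mu$ would drop below $m$, so the map into $\mathbb F_q[T]/M$ would fail to be surjective and the entire approach would collapse.
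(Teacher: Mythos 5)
Your proof is correct and follows essentially the same route as the paper's: both exploit the $p$-power decomposition of $g$ (equivalently, the $\mathbb F_q[T^p]$-structure of the derivative) and use squarefreeness of $M$ to make Frobenius mod $M$ bijective, reducing to a dimension count. The paper does this by extracting $p$-th roots of the coefficients $iT^{i-1}$ to reduce the congruence to one on the $i=1$ digit $g_1$, while you reformulate it as computing $\dim\phi(V)$ via the ring isomorphism $\mathbb F_q[T^p]/(\mu(T^p)) \cong \mathbb F_q[T]/M$; this is a tidier packaging of the same computation, with $V_0$ playing exactly the role of the $g_1$-direction.
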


\begin{proof}

Let $0 \leq j \leq p-1$ be the unique integer congruent to $d$ mod $p$.
Any $g \in \mathcal{M}_d$ can then be uniquely expressed as
\begin{equation}
g = \sum_{i=0}^{p-1} T^{i}g_i^p, \quad g_j \in \mathcal{M}_{\frac{d-j}{p}}, 
\quad d(g_i)  \leq \left\lfloor \frac{d - i}{p} \right\rfloor \ \text{for} \ i \neq j.
\end{equation}
For the derivative we then have
\begin{equation}
g' = \sum_{i=1}^{p-1} iT^{i-1}g_i^p
\end{equation}
so we set $a_i = iT^{i-1}$ for $1 \leq i \leq p-1$, and consider the congruence
\begin{equation}
\sum_{i=1}^{p-1}a_ig_i^p \equiv a \ \mathrm{mod} \ M.
\end{equation}

Since $M$ is squarefree, we can uniquely pick $b, b_1, \dots, b_{p-1} \in \F_q[T]$ with
\begin{equation}
b^p \equiv a \ \mathrm{mod} \ M, \quad b_i^p \equiv a_i \ \mathrm{mod} \ M, \ 1 \leq i \leq p-1,
\end{equation}
so the congruence above becomes
\begin{equation}
g_1 \equiv b - \sum_{i=2}^{p-1}b_ig_i \ \mathrm{mod} \ M
\end{equation}
and from this restriction on $g_1$ the proposition follows.
\end{proof}

The following technical proposition follows from the arguments of \cite[Page 371]{BGP92} or \cite[Section 9]{CG07} 
(see also \cite{BZ02}), 
which obtain stronger statements over $\mathbb{Z}$ in place of $\F_q[T]$.

\begin{prop} \label{BounNumSqArithProgProp}

Fix $\alpha, \epsilon > 0$, and a prime power $q$.
Then for integers $d,m,k \geq 0$ with $d \geq \epsilon (m+k)$,
and $M \in \mathcal{M}_m, \ A \in \mathcal{M}_k, \ a \in \F_q[T]$, we have
\begin{equation*} \label{DefiningAArithProgEq}
\# \left\{g \in \F_q[T] : d(g) < d, \ a+gM = \lambda AB^2, \quad \lambda \in \F_q, B \in \F_q[T] \right\} \ll q^{ (\frac{1}{2} + \alpha ) d} 
\end{equation*}
as $d \to \infty$, with the implied constant depending only on $\epsilon, \alpha$ and $q$.

\end{prop}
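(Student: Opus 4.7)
The plan is to parameterize solutions by pairs $(\lambda, B) \in \F_q \times \F_q[T]$ with $B$ monic satisfying $a + gM = \lambda A B^2$, bound the number of such pairs, and use that each pair determines $g$ uniquely. First, I would reduce to the case $\gcd(A, M) = 1$: if $A_0 := \gcd(A, M)$ has positive degree, then $A_0 \mid a + gM$ together with $A_0 \mid M$ forces $A_0 \mid a$ (otherwise the count is zero), and dividing the equation through by $A_0$ replaces $(A, M, k, m)$ by $(A/A_0, M/A_0, k - d(A_0), m - d(A_0))$; squarefreeness of $M$ ensures $\gcd(A/A_0, M/A_0) = 1$ after one step, and the hypothesis $d \geq \epsilon(m+k)$ is preserved (indeed, weakened).

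Assuming $\gcd(A, M) = 1$, the constraint $d(g) < d$ yields $d(\lambda A B^2 - a) < d + m$, forcing $d(B) \leq D := \lfloor (d + m - 1 - k)/2 \rfloor$ in the principal range $d(a) \leq d + m - 1$ (if instead $d(a) > d + m - 1$, matching the top coefficients of $\lambda A B^2$ against those of $a$ imposes extra linear conditions that only sharpen the count, so this subcase is strictly easier). For each fixed $\lambda \in \F_q^\times$, the congruence $\lambda A B^2 \equiv a \pmod M$ is quadratic in $B$; since $M$ is squarefree and $\gcd(A, M) = 1$, each prime factor $P$ of $M$ contributes at most $2$ residue classes for $B \bmod P$, so the total number of admissible $B \bmod M$ is at most $2^{\omega(M)}$. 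The case $\lambda = 0$ contributes at most one $g$ and is absorbed into the constant.

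Finally, I would invoke the divisor bound $2^{\omega(M)} \leq C_\beta \, q^{\beta m}$, valid in $\F_q[T]$ for any $\beta > 0$ with $C_\beta$ depending on $\beta$ and $q$, and split into two cases. If $D \geq m$, each residue class modulo $M$ contains at most $O(q^{D - m + 1})$ admissible $B$'s, yielding a total of $O \bigl( q \cdot C_\beta q^{\beta m} \cdot q^{D - m + 1} \bigr) = O \bigl( C_\beta q^{(d-k)/2 + (\beta - 1/2) m + O(1)} \bigr)$, which sits below $q^{(1/2 + \alpha) d}$ for any choice $\beta < 1/2$ and $d$ sufficiently large; if $D < m$, each residue class contains at most one such $B$, giving $O(C_\beta q^{\beta m + 1})$, and using $m \leq d/\epsilon$ from the hypothesis together with $\beta$ chosen so that $\beta / \epsilon < 1/2 + \alpha$ again produces the required bound. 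The principal obstacle is choosing $\beta$ so as to satisfy both constraints $\beta < 1/2$ and $\beta/\epsilon < 1/2 + \alpha$ simultaneously, together with the bookkeeping required to handle the degenerate regimes (large $d(a)$, and the reduction step when $\gcd(A, M)$ is nontrivial) uniformly in the parameters.
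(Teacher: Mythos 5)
Your plan---parameterize by $(\lambda,B)$, count admissible $B$ in each residue class modulo $M$, and split on whether $d(B)$ is large or small relative to $m$---is sound, and it gives an elementary proof in the spirit the paper is gesturing at (the paper itself merely cites the $\mathbb{Z}$-analogues in Bombieri--Granville--Pintz and Cilleruelo--Granville). But there is a genuine gap: you twice invoke the hypothesis that $M$ is squarefree, and no such hypothesis appears in the statement. This is not a harmless simplification, because the paper actually applies the proposition with a perfect square modulus: in the proof of \cref{LinearFormsDistinctDerivatives} it is invoked for the equation $M_1^2 r' + (a_1' M_1 - a_1 M_1') = \lambda A\overline{A} B^2$, i.e.\ with $M = M_1^2$. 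Your bound of $2^{\omega(M)}$ admissible residue classes for $B$ modulo $M$ is simply false for non-squarefree $M$: for $P^e \Vert M$ with $P \nmid \lambda A$, the congruence $\lambda A B^2 \equiv a \pmod{P^e}$ can have on the order of $|P|^{\min(j,e)/2}$ solutions where $j = v_P(a)$, so the count can be inflated by a factor as large as $|\gcd(a,M)|^{1/2}$ (take $M = P^{2j}$ and $P^{2j}\mid a$ for an extreme case). This inflation is in fact compensated by a matching contraction in the degree budget for $B$ (since such $B$ are forced to be divisible by a correspondingly large power of $P$), so the statement is still true, but that bookkeeping is precisely the work your argument omits; as written, the proof establishes only the special case of squarefree $M$, which does not cover the application. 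Incidentally, the other place you cite squarefreeness---to conclude $\gcd(A/A_0, M/A_0)=1$ after one reduction step---does not actually need it: this coprimality holds for arbitrary $M$ since a prime cannot divide both $A/\gcd(A,M)$ and $M/\gcd(A,M)$. The hand-waving about the regime $d(a) > d+m-1$ is also thin, though in odd characteristic the absence of carries does make the ``extra coefficient-matching constraints'' heuristic correct; I would still spell it out.
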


We can now prove \cref{SecRes}. 
We first prove the ``generic" special case where the derivatives of certain parameters are distinct, and then prove the general case. 

\begin{prop} \label{LinearFormsDistinctDerivatives} 

Fix $\epsilon, \delta> 0$,  $0 < \beta< 1/2$, and a positive integer $n$. Let $q$ be a power of an odd prime $p$ such that 
\begin{equation}\label{prop-q-condition} 
q > \left(  \frac{pne } {\min \left\{ \frac{\epsilon}{\epsilon + 2}, \frac{\epsilon \delta}{\epsilon + \delta} \right\}}\right)^{\frac{2}{ 1- 2\beta}}.
\end{equation}
Then for  nonnegative integers $d,m_1, \dots, m_n,k_1, \dots, k_n$ with 
\begin{equation}
d \geq \max\{\epsilon m_1, \dots, \epsilon m_n, \delta k_1, \dots, \delta k_n \}, \quad k_i \neq d+m_i, \ 1 \leq i \leq n,
\end{equation}
and pairs $(a_i, M_i) \in \mathcal{M}_{k_i} \times \mathcal{M}_{m_i}$ for $1 \leq i \leq n$ such that the derivatives $\left(\frac{a_i}{M_i}\right)'$ are all distinct, we have
\begin{equation}
\sum_{g \in \mathcal{M}_{d}} \prod_{i=1}^n \mu \left(a_i + gM_i \right) \ll |\mathcal{M}_d|^{1-  \frac{\beta}{p}}
\end{equation}
as $d \to \infty$, with the implied constant depending only on $\beta, \epsilon, \delta, n$ and $q$.\end{prop}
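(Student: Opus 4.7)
The plan is to stratify the sum by $h = g'$, apply \cref{PreparingMobiusToArithProgLem} to express each $\mu(a_i + gM_i)$ as a quadratic Dirichlet character, use the Frobenius on the squarefree quotient $\F_q[T]/E_i^{(h)}$ to reduce to a character in a free $p$-th-root variable $g_0$, combine the resulting product via the Chinese Remainder Theorem, and invoke \cref{FFCSCor} to bound each inner short character sum.

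Concretely, every $g \in \mathcal{M}_d$ decomposes uniquely as $g = \widetilde g_h + g_0^p$, where $\widetilde g_h$ is a canonical polynomial of degree $\leq d$ with derivative $h$ and $g_0 \in \F_q[T]$ has degree at most $\lfloor d/p\rfloor$ (using that $\ker(d/dT) = \F_q[T^p]$ consists of $p$-th powers). For fixed $h$, \cref{PreparingMobiusToArithProgLem} gives $\mu(a_i + gM_i) = S_i^{(h)} \chi_i^{(h)}(w_i^{(h)} + g)$ with $\chi_i^{(h)}$ a quadratic character modulo the squarefree $E_i^{(h)}$, depending only on $(a_i, M_i, h)$. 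Since Frobenius is bijective on $\F_q[T]/E_i^{(h)}$, a lift $u_i^{(h)} \in \F_q[T]$ of the $p$-th root of $w_i^{(h)}+\widetilde g_h$ modulo $E_i^{(h)}$ satisfies $\chi_i^{(h)}(w_i^{(h)}+g) = \chi_i^{(h)}\bigl((u_i^{(h)}+g_0)^p\bigr) = \chi_i^{(h)}(u_i^{(h)} + g_0)$, using that $\chi_i^{(h)}$ is quadratic and $p$ is odd.

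For generic $h$, the combined character $\widetilde\chi^{(h)} = \prod_i \chi_i^{(h)}$ modulo $L^{(h)} := \mathrm{lcm}_i E_i^{(h)}$ is nonprincipal, and CRT on the shifts produces an expression $\widetilde\chi^{(h)}(u^{(h)} + g_0)$ to which \cref{FFCSCor} applies. The bounds $\deg D_i^{(h)} \leq \max\{k_i+m_i,\, d + 2m_i\}$ and the hypotheses $m_i \leq d/\epsilon$, $k_i \leq d/\delta$ give $\deg L^{(h)} \leq nd \cdot \max\{(\epsilon+2)/\epsilon,\, (\epsilon+\delta)/(\epsilon\delta)\}$. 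With sum length $t = \lfloor d/p \rfloor$ in $g_0$, the ratio $t/\deg L^{(h)}$ is calibrated so that \eqref{prop-q-condition} lets \cref{FFCSCor} produce savings of the form $q^{-\beta d/p}$ per inner sum; summing over the $\ll q^{d-\lfloor d/p\rfloor}$ values of $h$ yields the target bound $|\mathcal{M}_d|^{1-\beta/p}$.

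The main obstacle is the exceptional locus of $h$ where $\widetilde\chi^{(h)}$ fails to be nonprincipal, either because some individual $\chi_i^{(h)}$ is principal (by \cref{PreparingMobiusToArithProgRem}, iff $D_i^{(h)} = A_iB_i^2$ with $A_i \mid M_i$) or because primes shared among several $D_i^{(h)}$ cancel out in the product. The distinct-derivatives hypothesis is essential here: the polynomial $D_i^{(h)} M_j^2 - D_j^{(h)} M_i^2 = M_i^2 M_j^2 \bigl((a_i/M_i)' - (a_j/M_j)'\bigr)$ is $h$-independent and nonzero, so any prime $P \nmid M_iM_j$ dividing both $D_i^{(h)}$ and $D_j^{(h)}$ lies in a fixed finite set, limiting the scope of cancellation. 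The number of exceptional $h$ is controlled by \cref{BounNumSqArithProgProp} (giving $\ll q^{(1/2+\alpha)d}$ for any $\alpha > 0$), with \cref{DerCongProp} absorbing residual congruence losses; the trivial bound $q^{d/p}$ applied to these exceptional $h$ then still keeps their contribution below $|\mathcal{M}_d|^{1-\beta/p}$, since $p$ is odd and $\beta < 1/2$.
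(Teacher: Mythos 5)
Your overall strategy mirrors the paper's approach correctly — stratify by $h = g'$, apply \cref{PreparingMobiusToArithProgLem} for each $i$, extract $p$-th roots to pass from $\chi_i^{(h)}(w_i^{(h)} + g)$ to $\chi_i^{(h)}(u_i^{(h)} + g_0)$, and reduce to a short character sum for \cref{FFCSCor}; you also correctly identify the role of the distinct-derivatives hypothesis via the $h$-independent, nonzero polynomials $M_i^2M_j^2\bigl((a_i/M_i)' - (a_j/M_j)'\bigr)$.

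However, there is a genuine gap in the CRT step. The claim that ``CRT on the shifts produces an expression $\widetilde\chi^{(h)}(u^{(h)} + g_0)$'' is simply false unless the moduli $E_i^{(h)}$ are pairwise coprime. When a prime $P$ divides both $E_i^{(h)}$ and $E_j^{(h)}$ but $u_i^{(h)} \not\equiv u_j^{(h)} \pmod P$, the product $\chi_i^{(h)}(u_i^{(h)} + g_0)\chi_j^{(h)}(u_j^{(h)} + g_0)$ restricted mod $P$ is the Legendre symbol of a quadratic in $g_0$ with two distinct roots — not a single shifted character, and also not identically $1$ (so it is not principal either). You cannot dump this case into the exceptional locus: any small prime $P$ dividing the fixed polynomial $\prod_{i<j}M_i^2M_j^2\bigl((a_i/M_i)'-(a_j/M_j)'\bigr)$ divides both $D_i^{(h)}$ and $D_j^{(h)}$ for roughly a $1/|P|$ proportion of all $h$, so the set of $h$ with shared factors among the $E_i^{(h)}$ has positive density. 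A trivial bound there is $\gg q^d$, swamping the target.

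The paper's proof closes this gap with a mechanism you omit. It forms $U_r = \operatorname{lcm}_{i<j}\gcd(E_r^{(i)}, E_r^{(j)})$, factors each character as $\chi_r^{(i)} = \widetilde\chi_r^{(i)}\widehat\chi_r^{(i)}$ with moduli $\gcd(E_r^{(i)},U_r)$ and $E_r^{(i)}/\gcd(E_r^{(i)},U_r)$, and splits the sum over $s$ into residue classes $u$ mod $U_r$. Within each class the $\widetilde\chi_r^{(i)}$ are constants and the $\widehat{E}_r^{(i)}$ are \emph{pairwise coprime} by construction, so CRT legitimately merges the $\widehat\chi_r^{(i)}$ into one shifted character; \cref{FFCSCor} then applies to a sum of length $t - \ell_r$. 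The factor $q^{\ell_r}$ from the trivial sum over residue classes is exactly offset by the count $\ll q^{d(1 - 1/p + \gamma) - \ell}$ of $r$ with $\ell_r \geq \ell$, which the paper proves via the $h$-independent polynomials, the divisor bound, and \cref{DerCongProp}. (You also never address the case $\gcd(a_i,M_i) \neq 1$, which the paper treats separately via an inclusion-exclusion over residue classes, but that is secondary to the CRT issue.)
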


\begin{remark} Note that the statement of this proposition remains meaningful even if $\epsilon$ and $\delta$ are very large, though it is at its strongest when $\epsilon$ and $\delta$ are small. \end{remark}

\begin{proof} 

Let us first assume that for every $1 \leq i \leq n$ we have $\gcd(a_i, M_i) = 1$.

We say that $g_1,g_2 \in \mathcal{M}_{d}$ are equivalent if $g_1' = g_2'$,
and let $\mathcal{R}$ be a complete set of representatives of equivalence classes.
So for each $g \in \mathcal{M}_{d}$ there exists a unique $r \in \mathcal{R}$ such that $(g-r)' = 0$,
and therefore also a unique $s \in \F_q[T]$ such that $g - r = s^p$.
We can thus write our sum as
\begin{equation} \label{rsSum}
\sum_{r \in \mathcal{R}} \sum_{d(s) < t} 
\prod_{i=1}^n \mu \left(a_i +  (r + s^p)M_i \right), 
\quad t = \frac{d}{p}.
\end{equation}
By \cref{PreparingMobiusToArithProgLem} (the notation of which is used throughout), our sum equals
\begin{equation} \label{SrSumEq}
\sum_{r \in \mathcal{R}} \prod_{i=1}^n S_{r}^{(i)} 
\sum_{\substack{d(s) < t}} \prod_{i = 1}^n \chi_r^{(i)} \left( w_r^{(i)} + s^p \right)
\end{equation}
with $\chi_r^{(i)}$ a character to a squarefree modulus $E_r^{(i)}$ 
(defined in \cref{PreparingMobiusToArithProgLem} using $D_r^{(i)}$).
Hence, there exist $f_r^{(i)} \in \F_q[T]$ with 
\begin{equation} \label{prootexteq}
{f_r^{(i)}}^p \equiv w_r^{(i)} \ \mathrm{mod} \ E_r^{(i)}, \quad 1 \leq i \leq n,
\end{equation}
so using the fact that $\chi_r^{(i)}$ is real, we see that our inner sum equals
\begin{equation} 
\sum_{d(s) < t} \prod_{i=1}^n \chi_r^{(i)} \left(f_r^{(i)} + s \right).
\end{equation}

For $1 \leq i < j \leq n$ we set
\begin{equation}
G_r^{(i,j)} = \gcd \left( E_r^{(i)}, E_r^{(j)} \right), \quad U_r = \lcm_{i,j} \left(  G_r^{(i,j)}  \right ), 
\quad \ell_r = d(U_r),
\end{equation}
and claim that, for every integer $\ell \geq 0$ and $\gamma > 0$, we have
\begin{equation}\label{ell-counting-claim}
\# \left\{r \in \mathcal{R} : \ell_r \geq \ell \right\}
\ll q^{d \left( 1 - \frac{1}{p}  + \gamma \right) - \min \left( \frac{d-1}{p} , \ell \right)}, \quad d \to \infty.
\end{equation}

To do this, first note that, for any $1 \leq i < j \leq n$ , since $G_r^{(i,j)}$ divides $E_r^{(i)}$, and divides $E_r^{(j)}$,
it also divides $D_r^{(i)}$, and divides $D_r^{(j)}$ so therefore it divides the polynomial 
\begin{equation}
M_j^2 D_r^{(i)} - M_i^2 D_r^{(j)} = 
M_i^2 M_j^2 \left( \frac{a_i}{M_i} \right)' - M_i^2 M_j^2 \left( \frac{a_j}{M_j}\right)'
\end{equation}
which is nonzero by our initial assumption.  
The degree of the above polynomial is at most $ d/\delta + 3d/\epsilon$,
so by the divisor bound (see \cite[Equation 1.81]{IK04}),
the polynomial $G_{r}^{(i,j)}$ attains $ \ll q^{2 \gamma d/ (n(n-1))}$ values, for any $\gamma > 0$. 
Hence, the tuple $\left(  G_{r}^{(i,j)}\right)_{1\leq i < j \leq n} $ attains $\ll q^{ \gamma d}$ values. For each possible tuple $G_{*}^{(i,j)}$, we can recover the residue class of $r'$ mod $ G_{*}^{(i,j)}$ from the congruence
\begin{equation}
M_j^2 r'   \equiv  M_j'a_{j} - a_{j}'M_j \ \mathrm{mod} \ G_r^{(i,j)}.
\end{equation} 
and the fact that  $G_r^{(i,j)}$ is prime to $M_j$ (because $D_r^{(j)}$ is prime to $M_j$, by definition).
Combining these for all $i,j$ we can also recover the residue class of $r'$ mod the least common multiple $U_*$ of $G_{*}^{(i,j)}$.

\cref{DerCongProp} tells us that for any $\alpha \in \F_q[T]$ we have
\begin{equation*}
\# \{r \in \mathcal{R} : r' \equiv \alpha \ \mathrm{mod} \ U_* \} \leq
q^{- \min \left\{d(U_*), \left\lfloor \frac{d-1}{p} \right\rfloor \right\}} \# \mathcal{R}
\ll  q^{d \left( 1 - \frac{1}{p}  \right) - \min \left( \frac{d-1}{p} , \ell \right)  }
\end{equation*}
since $d(U_*) \geq \ell$, so our claim is established.

Next,
we observe from \cref{ell-counting-claim} that the contribution of those $r \in \mathcal{R}$ with $\ell_r \geq \frac{d-1}{p}$
to \cref{rsSum} is $\ll q^{ d \left( 1 - \frac{1}{p} + \gamma \right)}$ and thus can be ignored as we can choose $\gamma$ small enough that
$1 - \frac{1}{p} + \gamma \leq  1- \frac{\beta}{p}$. 
So we may assume that $\ell_r < \frac{d-1}{p}$.

We further set
\begin{equation}
\widetilde{E}_r^{(i)}  = \gcd \left(E_r^{(i)}, U_r \right), \quad \widehat{E}_r^{(i)}  = \frac{E_r^{(i)}}{\widetilde{E}_r^{(i)}}, 
\quad 1 \leq i \leq n,
\end{equation}
and note that $\gcd(\widetilde{E}_r^{(i)}, \widehat{E}_r^{(i)}) = 1$ as $E_r^{(i)}$ is squarefree.
The Chinese remainder theorem then gives a unique decomposition
\begin{equation}
\chi_r^{(i)} = \widetilde{\chi}_r^{(i)} \widehat{\chi}_r^{(i)}, \quad 1 \leq i \leq n,
\end{equation}
to characters mod $\widetilde{E}_r^{(i)}$ and $\widehat{E}_r^{(i)}$ respectively.
In this notation, our sum reads
\begin{equation} 
\sum_{d(s) < t} \prod_{i=1}^n \widetilde{\chi}^{(i)}_r \left(f_r^{(i)} + s \right) 
\prod_{i=1}^n \widehat{\chi}^{(i)}_r \left(f_r^{(i)} + s \right)
\end{equation}
so splitting according to the residue class $u$ of $s$ mod $U_r$ we get
\begin{equation} 
\sum_{d(u) < \ell_r}\prod_{i=1}^n \widetilde{\chi}^{(i)}_r \left(f_r^{(i)} + u \right)  
\sum_{d(h) < t - \ell_r} \prod_{i=1}^n \widehat{\chi}^{(i)}_r \left(f_r^{(i)} + u + hU_r\right).
\end{equation}

Since $\gcd(\widetilde{E}_r^{(i)}, \widehat{E}_r^{(i)}) = 1$, we get that $\gcd(U_r, \widehat{E}_r^{(i)}) = 1$ for $1 \leq i \leq n$.
Hence, there exist $V^{(i)}_r \in \F_q[T]$ with $U_rV_r^{(i)} \equiv 1 \ \mathrm{mod} \ \widehat{E}_r^{(i)}$.
Summing trivially over $u$, we may thus consider
\begin{equation} 
\prod_{i=1}^n \widehat{\chi}_r^{(i)}(U_r)
\sum_{d(h) < t - \ell_r} \prod_{i=1}^n \widehat{\chi}^{(i)}_r \left(f_r^{(i)}V_r^{(i)} + uV_r^{(i)} + h\right). 
\end{equation}
From $\gcd(\widetilde{E}_r^{(i)}, \widehat{E}_r^{(i)}) = 1$ 
we moreover conclude that $\{\widehat{E}_r^{(i)}\}_{i = 1}^n$ are pairwise coprime.
The Chinese remainder theorem then gives an $f_{r,u} \in \F_q[T]$ with
\begin{equation}
f_{r,u} \equiv f_r^{(i)}V_r^{(i)} + uV_r^{(i)} \ \mathrm{mod} \ \widehat{E}_r^{(i)}, \quad 1 \leq i \leq n,
\end{equation}
so defining the character $\chi_r = \widehat{\chi}_r^{(1)} \cdots \widehat{\chi}_r^{(n)}$, 
mod $E_ r = \widehat{E}_r^{(1)} \cdots \widehat{E}_r^{(n)},$
the sum above becomes
\begin{equation} \label{AdKanEq}
\sum_{d(h) < t - \ell_r} \chi_r (f_{r,u} + h). 
\end{equation}
 
We have
\begin{equation} \label{InsteadEq}
\frac{t}{d(E_r)}  \geq \frac{t   }{n \max \{ d+ 2d/\epsilon, d/\delta +  d/\epsilon\}} = 
\frac{\min \left\{ \frac{\epsilon}{\epsilon + 2}, \frac{\epsilon \delta}{\epsilon + \delta} \right\}}{ pn } \eqdef \eta
\end{equation} 
so if $\chi_r$ is nonprincipal, \cref{FFCSCor} bounds the sum above by $\ll q^{  (1-\beta') t } $, 
for some $\beta' > \beta + p \gamma$, with $\gamma>0$ arbitrarily small.  
Because we have 
\begin{equation}
q > \left(  \frac{p n e  } {\min \left\{ \frac{\epsilon}{\epsilon + 2}, \frac{\epsilon \delta}{\epsilon + \delta} \right\}}\right)^{\frac{2}{ 1- 2\beta}}
\end{equation}
by assumption, 
using our definition of $\eta$ this can be writeen as
\begin{equation}
q >\left(e \eta^{-1}\right)^{ \frac{ 2}{1-2 \beta} }
\end{equation}
and therefore 
\begin{equation}
q > \left( e \eta^{-1} \right)^{ \frac{ 2}{1-2 \beta'} }
\end{equation}
for any sufficiently small choice of $\gamma$.

Hence, those $r \in \mathcal{R}$ for which $\ell_r =\ell $ and $\chi_r$ is nonprincipal, 
contribute $ \ll q^{ (1-\beta' ) t  } q^{\ell}$ individually, so the total contribution to our initial sum is $\ll$
\begin{equation*}
q^{ (1-\beta') t  +\ell}  q^{d \left( 1 - \frac{1}{p}  + \gamma \right) - \ell }   =  q^{ (1 - \beta) t} q^{ d \left( 1- \frac{1}{p} \right) } q^{ \gamma d - (\beta'- \beta) t }  = q^{ d \left( 1 -\frac{\beta}{p} \right) } q ^{ d ( \gamma - (\beta'-\beta) / p) } .
\end{equation*}
The increase from summing over the possible values of $\ell$ is linear in $d$ and thus can be bounded by the exponential $q ^{ d (  (\beta'-\beta) / p - \gamma) }  $, so the contribution of all the terms where $\chi$ is nonprincipal is $\ll q^{ d  \left(1 -\frac{\beta}{p} \right)}$.

Let $r \in \mathcal{R}$ for which $\chi_r$ is principal. 
Pairwise coprimality implies that $\widehat{\chi}_r^{(1)}$ is principal as well,
so the conductor of $\chi_r^{(1)}$ divides that of $\widetilde{\chi}_r^{(1)}$,
and the latter divides $U_r$. 
Thus, for some $\lambda \in \F_q$ and monic $B \in \F_q[T]$ we have
\begin{equation*}
M_1^2r' +  a_1'M_1 - a_1M_1' = D_r^{(1)} =  \mathrm{rad}_1 \left( D_r^{(1)} \right) \lambda B^2 
= \lambda A \overline{A} B^2, \ A \mid M_1, \overline{A} \mid U_r
\end{equation*}
where $A$ is the greatest common divisor of $M_1$ and $\mathrm{rad}_1 (D_r^{(1)})$,
and $\overline{A}$ is the conductor of $\chi_r^{(1)}$.
Since $U_r$ divides the nonzero polynomial $Q$ defined to be
\begin{equation}
\prod_{1 \leq i < j \leq n} M_i^2 M_j^2 \left[ \left( \frac{a_i}{M_i}\right)' - \left( \frac{a_j}{M_j}\right)' \ \right],
\end{equation}
it follows that $\overline{A} \mid Q$ as well, 
so \cref{BounNumSqArithProgProp} ensures that the number of $r$ for which the equation above holds is 
\begin{equation}
\ll  d_2(M_1Q)q^{( 1/2 + \zeta) d}.
\end{equation}
The divisor bound then allows us to neglect those $r$ for which $\chi_r$ is principal, as long as $\frac{1}{2} + \zeta + \frac{1}{p} < 1 - \frac{\beta}{p}$, which is alright as $\beta < \frac{1}{2} \leq p \left( \frac{1}{2}- \frac{1}{p} \right) $ so we can choose $\zeta$ small enough that this inequality holds.

Let us now handle the case when $a_i$ and $M_i$ are not coprime. 
Set 
\begin{equation}
H_i= \gcd(a_i, M_i), \quad 1 \leq i \leq n.
\end{equation} 
If some $H_i$ is not squarefree, the sum vanishes and the bound is trivial. 
Otherwise, we have the identity 
\begin{equation} 
\mu( a_i + g M_i ) = 
\begin{cases} 
\mu\left( \frac{ a_i}{H_i } + g \frac{ M_i}{ H_i} \right) \mu( H_i)  &\textrm{if } \gcd \left(  \frac{ a_i}{H_i  } + g \frac{ M_i}{ H_i },H_i \right) =1  \\ 0 & \textrm{if }\gcd \left(  \frac{ a_i}{H_i  } + g \frac{ M_i}{ H_i },H_i \right) \neq 1.
\end{cases} 
\end{equation}

Thus we can write the sum as the constant factor $\prod_{i=1}^{n} \mu(H_i)$ times a similar sum, except that the degrees of $a_i$ and $M_i$ are reduced and the terms where $ \gcd \left(  \frac{ a_i}{H_i  } + g \frac{ M_i}{ H_i },H_i \right) \neq 1$ are removed. 
The degree reduction preserves our inequalities and so is no trouble. 
Removing the terms with $ \gcd \left(  \frac{ a_i}{H_i  } + g \frac{ M_i}{ H_i },H_i \right) \neq 1$ amounts to removing those $g$ which lie in a particular residue class modulo each prime factor of $H_i$, for a total of at most $\Gamma = \sum_{i=1}^n \omega(H_i)$ residue classes.

We perform the same argument to this restricted sum. 
The only change that occurs is when we write $g = r+ s^p$, we must assume that $s$ avoids a corresponding set of residue classes modulo these primes. 
By inclusion-exclusion, 
we can write a Dirichlet character sum avoiding $\Gamma$ residue classes as an alternating sum of Dirichlet character sums in at most $2^{\Gamma}$ residue classes, 
and hence as an alternating sum of at most $2^{\Gamma}$ shorter Dirichlet character sums. 
Because the sums over each residue class are shorter, we can get the same bound for them by \cref{FFCSCor}. 
Thus our final bound for this case is worse by a factor of 
\begin{equation}
2^{\Gamma} = q^{ \sum_{i=1}^n o ( d( H_i)) } = q^{ o(d)}. 
\end{equation}
We can absorb this into our bound by slightly increasing $\beta$ so that it still satisfies the strict inequality \eqref{prop-q-condition}.
\end{proof}

\begin{thm} \label{LinearFormsMobThm}

Fix $\epsilon, \delta> 0$,  $0 < \beta< 1/2$, and a positive integer $n$. Let $q$ be a power of an odd prime $p$ such that 
\begin{equation} \label{StrictIn}
q > \left(  \frac{pne} {\min \left\{ \frac{\epsilon}{\epsilon + 2}, \frac{\epsilon \delta}{\epsilon + \delta} \right\}}\right)^{\frac{2}{ 1- 2\beta}}.
\end{equation}
Then for  nonnegative integers $d,m_1, \dots, m_n,k_1, \dots, k_n$ with 
\begin{equation}
d \geq \max\{\epsilon m_1, \dots, \epsilon m_n, \delta k_1, \dots, \delta k_n \}, \quad k_i \neq d+m_i, \ 1 \leq i \leq n,
\end{equation}
and pairs $(a_i, M_i) \in \mathcal{M}_{k_i} \times \mathcal{M}_{m_i}$ for $1 \leq i \leq n$ with $a_i/M_i$ distinct, we have
\begin{equation}
\sum_{g \in \mathcal{M}_{d}} \prod_{i=1}^n \mu \left(a_i + gM_i \right) \ll |\mathcal{M}_d|^{1 - \frac{\beta}{p} } 
\end{equation}
as $d \to \infty$, with the implied constant depending only on $\epsilon, \delta, \beta, n$ and $q$.

\end{thm}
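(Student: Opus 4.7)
The strategy is to reduce Theorem~\ref{LinearFormsMobThm} to the distinct-derivative case (Proposition~\ref{LinearFormsDistinctDerivatives}) by analyzing indices according to the partition induced by coincident derivatives. Let $G_1,\ldots,G_m \subseteq \{1,\ldots,n\}$ be the equivalence classes under the relation $i \sim j \iff (a_i/M_i)' = (a_j/M_j)'$. Within each class the rational functions $a_i/M_i$ are pairwise distinct by hypothesis, even though their derivatives coincide; since we are in characteristic $p$, this means $a_i/M_i - a_j/M_j$ is a nonzero $p$-th power in $\mathbb F_q(T^p)$ for each pair inside a class.

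First I reduce to the case $\gcd(a_i,M_i)=1$ via the coprime-reduction argument at the end of the proof of Proposition~\ref{LinearFormsDistinctDerivatives}. Then, as there, I write $g = r + s^p$ with $d(s)<t = d/p$, and apply Lemma~\ref{PreparingMobiusToArithProgLem} to each factor $\mu(a_i + (r+s^p)M_i)$. Because $(r+s^p)' = r'$ is independent of $s$, the derivative inputs $D_r^{(i)}$, the characters $\chi_r^{(i)}$, and their squarefree moduli $E_r^{(i)}$ all depend only on $r$, and each factor of the integrand again takes the form $S_r^{(i)}\chi_r^{(i)}(f_r^{(i)}+s)$.

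The key new observation is that for $i,j$ in the same class $G_k$, the identity $D_r^{(i)}/M_i^2 = D_r^{(j)}/M_j^2$ forces the characters $\chi_r^{(i)}$ and $\chi_r^{(j)}$ to arise from a common underlying quadratic character, after splitting off the $M_i$-dependent parts via a Chinese-remainder decomposition as in the proof of Lemma~\ref{PreparingMobiusToArithProgLem}. The product $\prod_{i\in G_k}\chi_r^{(i)}(f_r^{(i)}+s)$ then collapses to a single character $\widetilde\chi_r^{(k)}$ evaluated at the polynomial $\prod_{i\in G_k}(s+f_r^{(i)})$, whose roots are distinct modulo $E_r^{(k)}$ for all but a negligible set of $r$ (this is precisely where the distinctness of the $a_i/M_i$, rather than of their derivatives, enters). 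For pairs of indices belonging to different classes $G_a$ and $G_b$, the polynomial $M_j^2 D_r^{(i)} - M_i^2 D_r^{(j)}$ is still nonzero, so the gcd-counting estimate \eqref{ell-counting-claim} survives at the level of classes, and the overall count of $r$ with large $U_r$ is controlled as in Proposition~\ref{LinearFormsDistinctDerivatives}.

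The main obstacle is bounding the inner short character sum $\sum_{d(s)<t}\widetilde\chi(P(s))$ where $P(s)=\prod_{i\in G_k}(s+f_r^{(i)})$ has degree $|G_k|$, since Theorem~\ref{main-character-sum} is stated for linear arguments $f+h$ only. I would extend the geometric argument of Theorem~\ref{main-character-sum} by interpreting $\widetilde\chi(P(s))$ as the trace function of the pullback of the character sheaf along $s\mapsto P(s)$: the vanishing locus then decomposes, away from the roots of the modulus, as a union of affine hyperplanes indexed by pairs (root of $E_r^{(k)}$, root of $P$), which remains a simple normal crossings divisor except on a codimension-two locus provided these roots are distinct, so the vanishing-cycles calculation yields square-root cancellation of the expected shape. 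Combining this inner bound, the class-level gcd estimate, and the count of $r$ for which some $\widetilde\chi_r^{(k)}$ is principal (handled by Proposition~\ref{BounNumSqArithProgProp} applied to each class representative), the proof concludes in the same manner as Proposition~\ref{LinearFormsDistinctDerivatives}, with only an implicit constant depending on $n$ absorbed into the strict inequality \eqref{StrictIn}.
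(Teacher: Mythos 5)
Your approach is genuinely different from the paper's, and it has substantive gaps that would need to be filled before it could be called a proof.

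The paper's argument is a short reduction. Since the $a_i/M_i$ are distinct, the polynomials $a_iM_j - a_jM_i$ $(i<j)$ and the $M_i$ are all nonzero, so one can pick an auxiliary prime $P$ with $d(P)=o(d)$ dividing none of them. Splitting the sum over $g\in\mathcal M_d$ by the residue $z$ of $g$ modulo $P$ and writing $g=z+fP$, the inner sum becomes
\[
\sum_{f\in\mathcal M_{d-d(P)}}\prod_{i=1}^{n}\mu\bigl(a_i+zM_i+fM_iP\bigr),
\]
and now the derivatives $\bigl(\tfrac{a_i+zM_i}{M_iP}\bigr)'$ are automatically pairwise distinct: coincidence would force $a_iM_j\equiv a_jM_i\ \mathrm{mod}\ P$, contradicting the choice of $P$. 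One then applies Proposition~\ref{LinearFormsDistinctDerivatives} directly to each of the $q^{d(P)}$ inner sums, and since $d(P)=o(d)$, adjusting $\beta,\epsilon,\delta$ by $o(1)$ preserves the strict inequality \eqref{StrictIn}. No new character-sum machinery is needed.

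Your plan instead groups indices by derivative, and for indices in the same class attempts to merge the character product into a single character evaluated at a polynomial $P(s)$ of degree $|G_k|$. This is where the real work lies, and it is left as a sketch. Two concrete issues. First, Theorem~\ref{main-character-sum} and Corollary~\ref{FFCSCor} only cover linear arguments $\chi(f+h)$; extending them to $\chi(P(s))$ with $\deg P>1$ is not a routine modification of the proof. The proper-base-change identification in Lemma~\ref{l-function-case} (the special fiber is a symmetric power of a curve) uses in an essential way that the argument is monic of degree $t$ in the variable, and the simple-normal-crossings analysis of Lemma~\ref{snc} is carried out for a union of hyperplanes; replacing those hyperplanes by degree-$|G_k|$ hypersurfaces introduces new tangency strata that must be controlled, and the resulting rank bound and the dependence of the constant $q > (e/\eta)^{2/(1-2\beta)}$ would change. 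Until this is written out, the inner bound you need does not exist. Second, the ``collapse'' of $\prod_{i\in G_k}\chi_r^{(i)}(f_r^{(i)}+s)$ to $\widetilde\chi_r^{(k)}(P(s))$ is not automatic: even though $D_r^{(i)}/M_i^2$ is common across a class, the moduli $E_r^{(i)}$ differ by the factors $\gcd(\mathrm{rad}(M_i),\mathrm{rad}(W_r))$, so the characters do not literally agree; one must CRT off these discrepancies and check that the resulting character is nonprincipal and that the roots $f_r^{(i)}$ are distinct modulo a large enough part of the common modulus. You gesture at this but give no bound on the exceptional set of $r$. The paper's twist-by-$P$ trick avoids all of these complications, which is why it is the right move here.
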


\begin{proof}

Our initial assumption is that there are no coincidences among $(a_i/M_i)$, for $1 \leq i \leq n$,
so we can find a prime $P$ not dividing 
\begin{equation}
\prod_{i=1}^n M_i \prod_{1 \leq i < j \leq n} a_iM_j - a_jM_i,
\end{equation}
with $d(P) = o(d)$. Let $t'=d(P)$.
Splitting our initial sum according to the residue class $z$ of $g$ mod $P$ we get
\begin{equation}
\sum_{d(z) < t'} \sum_{f \in \mathcal{M}_{d-t'}} \prod_{i=1}^n \mu \left(a_i + zM_i + fM_iP \right).
\end{equation}

We can bound the sums over residue classes by applying \cref{LinearFormsDistinctDerivatives}. Indeed, if for some $1 \leq i < j \leq n$ we have
\begin{equation}
\left( \frac{a_i + zM_i}{M_iP}\right)' = \left( \frac{a_j + zM_j}{M_jP}\right)'
\end{equation}
then we get that $ \ a_iM_j \equiv a_jM_i  \ \mathrm{mod} \ P$, a contradiction. Because the length of the sum in this case is $q^{ d -t'  } $, we obtain a savings in each term of $q^{ (d- t') \beta/2p }$ from \cref{LinearFormsDistinctDerivatives}. To obtain our desired savings of $ q^{ d \beta/2p}$, we must choose $\beta+o(1)$ instead of $\beta$ in the statement of \cref{LinearFormsDistinctDerivatives} . Similarly to ensure that $d- t' \geq \max\{\epsilon m_1, \dots, \epsilon m_n, \delta k_1, \dots, \delta k_n \}$ we must choose $\epsilon-o(1)$ and $\delta-o(1) $. 
However because the inequality from \cref{StrictIn} is strict, 
we may increase $\beta$ by $o(1)$ and reduce $\epsilon$ and $\delta$ by $o(1)$ in such a way that this inequality is still satisfied.
\end{proof}


We prove two corollaries that give weaker results under conditions that are simpler to state.

\begin{cor} \label{MobThmLittleO}

Fix $\epsilon, \delta> 0$ and a positive integer $n$. Let $q$ be a power of an odd prime $p$ such that 
\begin{equation}
q >   p^2n^2e^2 \max\left(  1+\frac{2}{\epsilon} , \frac{1}{\epsilon} + \frac{1}{ \delta} \right)^2.
\end{equation}
Then for  nonnegative integers $d,m_1, \dots, m_n,k_1, \dots, k_n$ with 
\begin{equation}
d \geq \max\{\epsilon m_1, \dots, \epsilon m_n, \delta k_1, \dots, \delta k_n \}, \quad k_i \neq d+m_i, \ 1 \leq i \leq n,
\end{equation}
and distinct coprime pairs $(a_i, M_i) \in \mathcal{M}_{k_i} \times \mathcal{M}_{m_i}$ for $1 \leq i \leq n$, we have
\begin{equation}
\sum_{g \in \mathcal{M}_{d}} \prod_{i=1}^n \mu \left(a_i + gM_i \right) =o \left(  |\mathcal{M}_d|\right)  \end{equation}
as $d \to \infty$ for fixed $\epsilon, \delta, n, q$.

\end{cor}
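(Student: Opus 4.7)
The plan is to derive this corollary directly from \cref{LinearFormsMobThm} by choosing a small positive $\beta$ that still satisfies the strict inequality appearing there. First I would rewrite the denominator in the exponent base: since
\begin{equation*}
\frac{1}{\min\left\{\frac{\epsilon}{\epsilon+2},\ \frac{\epsilon\delta}{\epsilon+\delta}\right\}}
=\max\left\{1+\tfrac{2}{\epsilon},\ \tfrac{1}{\epsilon}+\tfrac{1}{\delta}\right\},
\end{equation*}
the hypothesis of the corollary is exactly $q>\bigl(pne\cdot\max\{1+2/\epsilon,\ 1/\epsilon+1/\delta\}\bigr)^{2}$, which is the $\beta=0$ limit of the condition \eqref{StrictIn} in \cref{LinearFormsMobThm}. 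Since the inequality is strict, by continuity of the right hand side in $\beta$ one can fix some $\beta\in(0,1/2)$ small enough that \eqref{StrictIn} still holds for the given $q,p,n,\epsilon,\delta$.

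The second step is to verify the hypothesis ``$a_i/M_i$ distinct'' of \cref{LinearFormsMobThm} from the weaker hypothesis given here. Suppose $a_i/M_i=a_j/M_j$ in the field of fractions with $(a_i,M_i)=(a_j,M_j)=1$ and both $M_i,M_j$ monic. Then $M_j a_i=M_i a_j$, and coprimality forces $M_i\mid M_j$ and $M_j\mid M_i$, hence $M_i=M_j$ and then $a_i=a_j$, contradicting distinctness of the pairs. So the pairs being distinct and coprime implies the quotients are distinct, and the remaining hypotheses on the degrees and on $k_i\neq d+m_i$ are imported unchanged.

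Applying \cref{LinearFormsMobThm} with this chosen $\beta$ then yields
\begin{equation*}
\sum_{g\in\mathcal{M}_d}\prod_{i=1}^n \mu(a_i+gM_i)\ll |\mathcal{M}_d|^{1-\beta/p},
\end{equation*}
with implied constant depending only on $\epsilon,\delta,\beta,n,q$. Since $\beta/p>0$ is fixed once $\epsilon,\delta,n,q$ are fixed, the right hand side is $o(|\mathcal{M}_d|)$ as $d\to\infty$, which is the desired conclusion. There is no real obstacle here: the corollary is simply the packaging of \cref{LinearFormsMobThm} without a quantitative saving and with the cleaner hypothesis of distinct coprime pairs, so the only mildly nontrivial point is the bookkeeping that translates the $\min$ in \eqref{StrictIn} into the $\max$ of the corollary and the elementary observation that distinct coprime pairs give distinct ratios.
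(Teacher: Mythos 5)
Your proposal is correct and follows essentially the same route as the paper: rewrite the $\min$ as a $\max$ to see the hypothesis on $q$ is the $\beta=0$ limit of \eqref{StrictIn}, use strictness to pick a small $\beta>0$, and invoke \cref{LinearFormsMobThm}. You also supply the (easy, and in the paper implicit) verification that distinct coprime pairs $(a_i,M_i)$ force the ratios $a_i/M_i$ to be distinct, which is a welcome bit of explicitness.
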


\begin{proof}
The assumed lower bound on $q$ is equivalent to
\begin{equation}
q > \left(  \frac{pne} {\min \left\{ \frac{\epsilon}{\epsilon + 2}, \frac{\epsilon \delta}{\epsilon + \delta} \right\}}\right)^{2}.
\end{equation}
If $q$ satisfies this inequality, we can take $\beta$ small enough that the inequality
\begin{equation}
q > \left(  \frac{pne} {\min \left\{ \frac{\epsilon}{\epsilon + 2}, \frac{\epsilon \delta}{\epsilon + \delta} \right\}}\right)^{\frac{2}{ 1- 2\beta}}
\end{equation}
holds, and then apply \cref{LinearFormsMobThm}. 
\end{proof}

\begin{cor} \label{ChowlaThm} 
Let $q$ be a power of an odd prime $p$ such that 
\begin{equation}
q >   p^2n^2e^2,
\end{equation}
and let $(a_i, M_i) \in \mathcal{M}_{k_i} \times \mathcal{M}_{m_i}$ be distinct coprime pairs for $1 \leq i \leq n$.
Then
\begin{equation}
\sum_{g \in \mathcal{M}_{d}} \prod_{i=1}^n \mu \left(a_i + gM_i \right) =o \left(  |\mathcal{M}_d|\right)  \end{equation}
as $d \to \infty$ for fixed $q,n, a_i, M_i$. \end{cor}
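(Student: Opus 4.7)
The plan is to deduce Corollary \ref{ChowlaThm} directly from Corollary \ref{MobThmLittleO} by choosing the parameters $\epsilon$ and $\delta$ large enough, exploiting the fact that in the Chowla statement the pairs $(a_i, M_i)$ are held fixed while $d \to \infty$.

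First, I would observe that the hypothesis $q > p^2 n^2 e^2$ is strict and the expression
\begin{equation*}
\max\!\left(1 + \tfrac{2}{\epsilon},\ \tfrac{1}{\epsilon} + \tfrac{1}{\delta}\right)^{2}
\end{equation*}
tends to $1$ as $\epsilon, \delta \to \infty$. Consequently I can pick finite values $\epsilon, \delta > 0$ (depending only on $q, n, p$) such that
\begin{equation*}
q > p^{2} n^{2} e^{2} \max\!\left(1 + \tfrac{2}{\epsilon},\ \tfrac{1}{\epsilon} + \tfrac{1}{\delta}\right)^{2}.
\end{equation*}
This places us in a regime where the quantitative hypothesis of Corollary \ref{MobThmLittleO} is satisfied.

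Next, I would verify the degree conditions for all sufficiently large $d$. Since $n$ and the pairs $(a_i, M_i)$ are fixed, the integers $m_i = d(M_i)$ and $k_i = d(a_i)$ are bounded constants. Therefore for $d$ larger than some threshold $d_0 = d_0(\epsilon, \delta, a_i, M_i)$ we automatically have $d \geq \epsilon m_i$ and $d \geq \delta k_i$ for every $i$, and also $k_i \neq d + m_i$. Distinctness and coprimality of the pairs $(a_i, M_i)$ are given by hypothesis, so all the assumptions of Corollary \ref{MobThmLittleO} hold.

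Applying Corollary \ref{MobThmLittleO} yields
\begin{equation*}
\sum_{g \in \mathcal{M}_d} \prod_{i=1}^{n} \mu(a_i + g M_i) = o(|\mathcal{M}_d|)
\end{equation*}
as $d \to \infty$ with $q, n, a_i, M_i$ fixed, which is precisely the claim. There is no real obstacle here; the step that requires any thought is just the continuity argument: one must confirm that the strict inequality $q > p^{2} n^{2} e^{2}$ gives room to absorb the factor $\max(1 + 2/\epsilon, 1/\epsilon + 1/\delta)^{2}$ by choosing $\epsilon, \delta$ sufficiently large, which is immediate.
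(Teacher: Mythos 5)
Your proposal is correct and follows essentially the same route as the paper: choose $\epsilon,\delta$ large enough that the bound $q > p^2n^2e^2\max(1+2/\epsilon,\,1/\epsilon+1/\delta)^2$ holds, note that the degree conditions $d \geq \max\{\epsilon m_i, \delta k_i\}$ and $k_i \neq d+m_i$ hold for all sufficiently large $d$ since the $(a_i,M_i)$ are fixed, and then invoke \cref{MobThmLittleO}. There is no meaningful difference from the paper's argument.
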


\begin{proof} 
We can take $\epsilon, \delta$ large enough that 
\begin{equation}
q >   p^2n^2e^2 \max\left(  1+\frac{2}{\epsilon} , \frac{1}{\epsilon} + \frac{1}{ \delta} \right)^2.
\end{equation}
For this $\epsilon$ and $\delta$, the inequalities 
\begin{equation}
d \geq \max\{\epsilon m_1, \dots, \epsilon m_n, \delta k_1, \dots, \delta k_n \}, \quad k_i \neq d+m_i, \ 1 \leq i \leq n,
\end{equation}
will be satisfied for all $d$ sufficiently large. We can then apply \cref{MobThmLittleO} to deduce the claim. \end{proof} 

\section{Level of distribution}

The following will be obtained using the techniques of \cite{FM98} and \cite{FKM14} in the appendix.

\begin{thm} \label{FKMThm}
Fix an odd prime power $q$. 
Then for any $\theta>0$, for nonnegative integers $d,m$ with $d \leq m$,
squarefree $M \in \mathcal{M}_m$, and an additive character $\psi$ mod $M$, we have
\begin{equation} \label{FMeq}
\sum_{\substack{g \in \mathcal{M}_d \\ (g, M) = 1}} \mu(g)\psi(\overline g) \ll q^{ \left(\frac{3}{16}+ \theta\right)m + \frac{25}{32}d }
\end{equation}
as $d \to \infty$, with the implied constant depending only on $q$ and $\theta$.

\end{thm}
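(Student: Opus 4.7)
My plan is to mimic the strategy of Fouvry--Michel \cite{FM98}, following the axiomatic framework of \cite{FKM14} but adapted to $\F_q[T]$, so that the proof in the appendix amounts to verifying the required geometric inputs and then carrying out the combinatorial decomposition with the right parameters to obtain the exponents $3/16$ and $25/32$.

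The first step is to replace $\mu$ by a combinatorial identity of Vaughan (or Heath-Brown) type, thereby reducing \eqref{FMeq} to two kinds of bilinear expressions. A Type I sum has the shape
\begin{equation*}
\sum_{\substack{m \in \mathcal{M}_{d_1}}} \alpha_m \sum_{\substack{n \in \mathcal{M}_{d_2} \\ (mn,M)=1}} \psi\bigl(\overline{mn}\bigr)
\end{equation*}
with $d_1 + d_2 = d$, $\alpha_m$ a bounded divisor-like coefficient, and $d_1$ below some threshold $U$; a Type II sum has the same form but with a general bounded $\beta_n$ in the inner sum and both $d_1, d_2$ in a dyadic window around $U$. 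The exponents ultimately come from optimizing $U$ against the two estimates below.

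For the Type I sums I would complete the inner sum in $n$ modulo $M$ via the character expansion of the indicator of a degree class, reducing to complete sums of the form $\sum_{n \bmod M} \psi(\overline{mn})\chi(n)$. By the multiplicativity over the prime factors of the squarefree $M$, these factor into local Kloosterman-type sums, each of which enjoys square-root cancellation by Deligne (equivalently Weil), giving a bound of order $q^{d_1} \cdot |M|^{1/2+\theta}$. For the Type II sums I would apply Cauchy--Schwarz in the $m$-variable and open the square, producing an inner sum
\begin{equation*}
\sum_{\substack{m \in \mathcal{M}_{d_1}}} \psi\bigl(\overline{m n_1}\bigr)\,\overline{\psi\bigl(\overline{m n_2}\bigr)}
\end{equation*}
for parameters $n_1,n_2$. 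This is precisely a correlation sum of the trace functions of the pushforwards of the Artin--Schreier sheaves $\mathcal{L}_\psi(\overline{\,\cdot\,n_i})$; the FKM machinery, applied to the Kloosterman-like sheaf associated with $x \mapsto \psi(\overline{x})$, gives square-root cancellation unless $n_1 = n_2$, provided one verifies that the sheaf is geometrically irreducible and not geometrically isomorphic to any of its multiplicative translates—this is the analogue of the key sheaf-theoretic input of Fouvry--Michel.

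Balancing the Type I estimate (which gains as $d_1$ is small) against the Type II estimate (which gains when both $d_1$ and $d_2$ are of comparable moderate size) with the optimal choice of $U$ produces exactly the bound $q^{(3/16+\theta)m + (25/32)d}$, with the arithmetic of the exponents matching the computation carried out over $\Z$ in \cite{FM98}. The main obstacle I expect is the sheaf-theoretic step: verifying that the relevant inverse-Artin--Schreier sheaves are geometrically irreducible and have no spurious isomorphisms under multiplicative translation when $M$ is a general squarefree modulus (rather than a single prime, as in \cite{FM98}). Once this geometric input is in hand, the Cauchy--Schwarz plus Deligne argument and the combinatorial bookkeeping are routine translations of the integer case.
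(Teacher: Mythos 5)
Your high-level plan (Vaughan decomposition into Type~I and Type~II, then geometric estimates on the complete sums) is the right skeleton, and your Type~II sketch (Cauchy--Schwarz in the variable carrying the second bounded coefficient, open the square, complete, then bound the resulting Kloosterman-type correlation $\mathcal{C}(g,h)$) is close to what the paper actually does, except the paper gets by with a one-line Weil bound for Artin--Schreier sums rather than the heavier FKM correlation machinery you invoke. The gap is in your treatment of Type~I.

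Your Type~I plan is to ``complete the inner sum in $n$ modulo $M$ via the character expansion.'' But the entire setting of \cref{FKMThm} is $d \leq m = \deg M$, so the length of the whole sum is already at most the modulus, and after peeling off the Vaughan coefficient the smooth inner variable has degree $r = d - k$, which is typically much smaller than $m$ (the threshold $U$ forces $r$ to stay around $\frac{9}{16}d - \frac{1}{8}m$, far below $m$). In that range direct P\'olya--Vinogradov completion gains nothing: the completed sum is of size $q^{m/2}$ while the incomplete sum only has $q^r \ll q^{m/2}$ terms, so you would do no better than the trivial bound. This is precisely the obstacle that makes \cite{FM98} a nontrivial paper. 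The actual Type~I argument goes through the ``bilinear shifting trick'' of Fouvry--Michel: one artificially re-introduces averaging variables $a \in \mathcal{M}_{d_A}$, $b \in \mathcal{M}_{d_B}$ with carefully chosen degrees $d_A = \frac{3r-k}{4}$, $d_B = \frac{k+r}{4}-1$, applies H\"older's inequality with exponent $6$ to make the effective sum long enough, and is then faced not with a single complete Kloosterman sum but with a family of them, namely $\sum_{x \in A_{\mathbf b}} S(R_{\mathbf b}(x),z)$ for a degree-$6$ rational function $R_{\mathbf b}$. Bounding this is the genuinely geometric input of the whole proof: one must pull back the Kloosterman sheaf $\mathcal{K}\ell_2$ along $z\,R_{\mathbf b}$, use that its geometric monodromy group $SL_2$ is connected to kill $H^0_c$ and $H^2_c$, and control the Euler characteristic via a Swan-conductor count, yielding a bound of the form $\ll |M|\,d_2(M)^4\,|\operatorname{lcm}_{\sigma \in S_3} \gcd(M_\sigma^{\mathbf b}, z)|$ which then must be summed over the $\mathbf b$'s and over divisors $M_\sigma \mid M$ (with an extra diagonal case when $p=3$). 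None of this is present in your sketch, and the exponents $3/16$ and $25/32$ arise precisely from balancing the outcome of this H\"older--Kloosterman-sheaf estimate against the Type~II bound---a simple Type~I completion would not produce them. You should revisit the case $r < m$ and work out the shifting-plus-H\"older reduction and the Kloosterman sheaf estimate before claiming the result.
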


The following proposition allows us to identify the main term in sums of von Mangoldt in arithmetic progressions.

\begin{prop} \label{MainTermPrimesLD}

Fix a prime power $q$. 
For nonnegative integers $d,m$, and a squarefree $M \in \mathcal{M}_m$ we have
\begin{equation}
\sum_{k = 1}^{d} kq^{-k} \sum_{\substack{A \in \mathcal{M}_k \\ (A,M) = 1}} \mu(A) = 
- \frac{q^m}{\varphi(M)} + q^{o(m + d) - d}.
\end{equation}

\end{prop}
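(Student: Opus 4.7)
The plan is to convert the inner Möbius sum into a generating series and then telescope. Set
\begin{equation*}
a_k := \sum_{\substack{A \in \mathcal{M}_k \\ (A,M)=1}} \mu(A)
\end{equation*}
and consider $F(u) := \sum_{k \geq 0} a_k u^k$. The Euler product identity $\sum_A \mu(A) u^{d(A)} = \prod_P (1-u^{d(P)}) = 1-qu$, restricted to primes $P \nmid M$, gives $F(u) = (1-qu) G(u)$, where $G(u) := \prod_{P \mid M}(1 - u^{d(P)})^{-1} = \sum_{k \geq 0} b_k u^k$ and $b_k$ counts the monic polynomials of degree $k$ whose prime factors all divide $M$. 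Comparing coefficients yields $a_k = b_k - q b_{k-1}$ for $k \geq 1$.

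Setting $c_k := b_k q^{-k}$, we have $k a_k q^{-k} = k(c_k - c_{k-1})$, and a straightforward telescoping (Abel summation) gives
\begin{equation*}
\sum_{k=1}^d k a_k q^{-k} = (d+1) b_d q^{-d} - \sum_{k=0}^d b_k q^{-k}.
\end{equation*}
Since $\sum_{k \geq 0} b_k q^{-k} = G(1/q) = \prod_{P \mid M}(1 - |P|^{-1})^{-1} = q^m/\varphi(M)$, substituting and rearranging yields
\begin{equation*}
\sum_{k=1}^d k a_k q^{-k} + \frac{q^m}{\varphi(M)} = (d+1) b_d q^{-d} + \sum_{k > d} b_k q^{-k} =: E,
\end{equation*}
a nonnegative quantity. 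The proposition reduces to showing $E = q^{o(m+d) - d}$.

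To bound $E$, I use the combinatorial estimate $b_k \leq \binom{k+\omega(M)-1}{\omega(M)-1} \leq (4ek/\omega(M))^{\omega(M)}$ valid for $k \geq \omega(M)$. In the main regime $d \geq \omega(M)$, the ratio $b_{k+1} q^{-(k+1)}/b_k q^{-k}$ is eventually $\leq \sqrt{e}/q < 1$, so the tail $\sum_{k > d} b_k q^{-k}$ is a geometric series dominated by its first term, giving $E \ll (d+2)(4ed/\omega(M))^{\omega(M)} q^{-d}$; taking $q$-logarithms,
\begin{equation*}
\log_q E \leq \log_q(d+2) + \omega(M)\log_q\bigl(4ed/\omega(M)\bigr) - d + O(1).
\end{equation*}
Invoking the sharp bound $\omega(M) \ll m/\log_q m$ (a consequence of the prime polynomial theorem: the $\omega(M)$ distinct monic prime divisors of $M$ contribute at least $\sim \omega(M) \log_q \omega(M)$ to the degree) and a short case analysis on the sizes of $d$ and $\omega(M)$ yields $\omega(M)\log_q(ed/\omega(M)) = o(m+d)$. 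In the complementary regime $d < \omega(M) = O(m/\log_q m) = o(m)$, the trivial bound $E \leq (d+2) q^m/\varphi(M) \leq (d+2) \cdot 2^{\omega(M)} = q^{o(m)}$ suffices, since $d = o(m)$ makes $q^{\varepsilon(m+d) - d} \gg q^{o(m)}$ for every $\varepsilon > 0$ once $m$ is large.

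The main obstacle is the uniform estimate $\omega(M)\log_q(d/\omega(M)) = o(m+d)$ in the critical range $d \asymp m$ with $\omega(M) \asymp m/\log_q m$: the quantity there is of size $m \log\log m/\log m$, and the key saving is the factor $\log\log m/\log m \to 0$. The weaker divisor bound $\tau(M) = q^{o(m)}$, which yields only $\omega(M) = o(m)$, would be insufficient in this delicate range; the sharp prime-counting input is essential.
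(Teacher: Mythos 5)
Your proof is correct and takes a genuinely different route from the paper's. Both start from the Euler product identity $\sum_{k} a_k u^k = (1-qu)\prod_{P\mid M}(1-u^{d(P)})^{-1}$, but from there they diverge. The paper works with the rescaled series $u\frac{d}{du}\bigl[(1-u)F(u)\bigr]$, where $F(u)=\prod_{P\mid M}(1-u^{d(P)}|P|^{-1})^{-1}$, reads off the main term by evaluating at $u=1$, and then bounds the coefficients of degree $>d$ by a Cauchy estimate $\oint_{|u|=r}$ over a circle of radius $r$ close to $q$, using that $F$ is a product of $\omega(M)=o(m)$ Euler factors each bounded on that circle by $(1-r/q)^{-1}$. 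You instead telescope via Abel summation, reducing to the explicit nonnegative tail $E=(d+1)b_dq^{-d}+\sum_{k>d}b_kq^{-k}$ and bounding $b_k$ combinatorially by $\binom{k+\omega-1}{\omega-1}$. Your route is more elementary (no contour) and arguably more transparent, with a minor technical caveat: the geometric decay of your upper bound on $b_kq^{-k}$ really needs $k\geq 2\omega(M)$ rather than $k\geq\omega(M)$ when $q$ is small (the ratio bound is $e^{\omega/k}/q$, which is only $<1$ for all $q\geq 2$ once $k\geq 2\omega$), so the threshold between your two regimes should be moved slightly; this is easily patched.

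The closing remark, however, is not correct: the sharp count $\omega(M)\ll m/\log_q m$ is not essential, and the weaker input $\omega(M)=o(m)$ (uniformly over squarefree $M\in\mathcal{M}_m$) already suffices. Indeed, writing $\omega=\omega(M)$ and assuming $\omega\leq d$, if $d\leq m$ then
\begin{equation*}
\omega\log_q\!\left(\frac{d}{\omega}\right)\leq\omega\log_q\!\left(\frac{m}{\omega}\right)=m\cdot\frac{\omega}{m}\log_q\!\left(\frac{m}{\omega}\right)=o(m),
\end{equation*}
since $s\log_q(1/s)\to 0$ as $s=\omega/m\to 0$; and if $d>m$ one adds $\omega\log_q(d/m)\leq\omega\cdot(d/m)/(e\log q)=(\omega/m)\cdot O(d)=o(d)$, using $\log x\leq x/e$. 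So $\omega\log_q(d/\omega)=o(m+d)$ in all cases, including the range $d\asymp m$, $\omega\asymp m/\log_q m$ you single out. This is precisely the amount of prime-counting the paper uses, via $\max_{|u|=r}|F(u)|\leq(1-r/q)^{-\omega(M)}=q^{o(m)}$ for fixed $r<q$. The two proofs therefore rest on the same arithmetic input; your argument substitutes elementary combinatorics for the paper's complex-analytic coefficient bound.
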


\begin{proof}

The left hand side above is the sum of the first $d$ coefficients of the power series
\begin{equation*}
\begin{split}
u\frac{ d}{du} \sum_{k = 1}^{\infty}  q^{-k} u^k \sum_{\substack{A \in \mathcal{M}_k \\ (A,M) = 1}} \mu(A) &= 
u\frac{d}{du} \prod_{P \nmid M} \left( 1 - u^{d(P)} |P|^{-1} \right) \\
&= u \frac{d}{du} \left(  (1-u) \prod_{P \mid M}  \left(1- u^{d(P)} |P|^{-1} \right)^{-1}\right).
\end{split}
\end{equation*}

Summing all the coefficients of a power series, evaluates it at $u=1$.
Hence, the main term comes from the equality
\begin{equation*}
\left( u\frac{ d}{du} \left((1-u) F(u)\right) \right) (1) = -F(1), \quad F(u) = \prod_{P \mid M}  \left(1- u^{d(P)} |P|^{-1} \right)^{-1}.
\end{equation*}

The coefficients of degree greater than $d$ contribute to the error term.  
To bound the sum of these coefficients, we can write the degree $k$ coefficient as 
\begin{equation}
k \oint\limits_{|u| = r} \frac{  (1-u) F(u) }{u^{k+1} }
\end{equation}
for $r < q$, getting a bound of 
\begin{equation}
(1+r) r^{-k-1} \max_{|u| = r} F(u).
\end{equation}
As long as the above maximum is subexponential in $m$ for all $r<q$, the expression above will be $q^{o(m)}  (1+r ) r^{-k-1}$, so the sum of the coefficients of degree greater than $d$ is 
\begin{equation}
q^{o(m)}  \sum_{k>d} k (1+r ) r^{-k-1} = q^{ o (m)}  (r-o(1))^{-d}. 
\end{equation}
Taking $r$ arbitrarily close to $q$, the above is $q^{ o(m) } (q-o(1))^d = q^{ o(m+d) - d}$. 

The value of $F(u)$ is indeed subexponential in $m$, because $M$ has $o(m)$ prime divisors and each contributes at most 
\begin{equation}
\left(1 -\left( \frac{r}{ q}\right)^{d(P)} \right) ^{-1} \leq \left( 1- \frac{r}{q} \right)^{-1}.
\end{equation}

\end{proof}

In the following we deduce, from our results on the M\"{o}bius function, 
a level of distribution beyond $1/2$ for primes in arithmetic progressions to squarefree moduli.
We shall use the convolution identity $\Lambda = -1 * (\mu \cdot \deg)$
which for $f \in \F_q[T]^+$ of degree $d \geq 0$ says that
\begin{equation} \label{ConvId}
\Lambda(f) = - \sum_{k=1}^{d} k \mathop{\sum_{A \in \mathcal{M}_k} \sum_{B \in \mathcal{M}_{d-k}}}_{AB = f} \mu(A).
\end{equation}

\begin{cor} \label{PrimesLODCor}

For any $0< \omega < 1/32$, for any odd prime $p$ and power $q$ of $p$ such that $q> p^2e^2  \left( 1+ \frac{50}{   1- 32 \omega } \right)^2 $,
the following holds.
For nonnegative integers $d,m$ with $d \geq (1 - \omega)m , $, 
squarefree $M \in \mathcal{M}_m$, 
and $a \in \mathbb F_q[T]$ with $d(a)<d+m$ and coprime to $M$, we have
\begin{equation}
\sum_{g \in \mathcal{M}_d} \Lambda(a + gM) = \frac{q^{d + m}}{\varphi(M)} + E_q
\end{equation}
as $d \to \infty$, with a power saving error term $E_q$.

\end{cor}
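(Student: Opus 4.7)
The plan is to apply the convolution identity $\Lambda = -1 * (\mu \cdot \deg)$ from \eqref{ConvId}, extract the main term from the "short divisor" part via \cref{MainTermPrimesLD}, and control the "long divisor" error by splitting it at a parameter $\tau$ and applying \cref{ThirdRes} on one piece and \cref{FKMThm} on the other. Substituting \eqref{ConvId} and interchanging sums gives
\[
\sum_{g \in \mathcal{M}_d} \Lambda(a+gM) = -\sum_{k=1}^{d+m} k \sum_{\substack{A \in \mathcal{M}_k \\ (A,M)=1}} \mu(A) \cdot N_k(A),
\]
with $N_k(A) = \#\{B \in \mathcal{M}_{d+m-k} : AB \equiv a \pmod M\}$. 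For $k \leq d$, equidistribution of $\mathcal{M}_{d+m-k}$ in residues mod $M$ gives $N_k(A) = q^{d-k}$, so the $k \leq d$ piece equals $-q^d \sum_{k=1}^d k q^{-k} \sum_{A \in \mathcal{M}_k,(A,M)=1}\mu(A)$; \cref{MainTermPrimesLD} evaluates this to $q^{d+m}/\varphi(M) + O(q^{o(m+d)})$, producing the main term with a power-saving error.

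The remaining $k > d$ contribution, reindexed by $j = d+m-k \in \{0,\ldots,m-1\}$, reads
\[
E_1 = -\sum_{j=0}^{m-1}(d+m-j)\sum_{\substack{B \in \mathcal{M}_j \\ (B,M)=1}} \sum_{\substack{A \in \mathcal{M}_{d+m-j} \\ A \equiv a\bar B \pmod M}}\mu(A).
\]
Fix $\tau > 0$ and split $E_1 = E_1' + E_1''$ at $j = d - \tau m$. For $E_1'$ (where $k \geq (1+\tau)m$) the inner $\mu$-sum in AP satisfies the hypothesis of \cref{ThirdRes} with $\eta = \tau/(1+\tau)$, provided $q > p^2 e^2(1 + 2/\tau)^2$; its power-saving form then gives each inner sum $\ll q^{(d-j)(1-\nu)}$ for some $\nu>0$, yielding $|E_1'| \ll q^{d-\nu\tau m}$ after trivially summing over $B$ and $j$.

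For $E_1''$ I would detect the congruence via additive Fourier inversion, using $\mathbf{1}[AB \equiv a\,(M)] = \mathbf{1}[\bar A \equiv B\bar a\,(M)] = |M|^{-1}\sum_c e_M(c(\bar A - B\bar a))$, so that the inner double sum becomes $|M|^{-1}\sum_c \gamma_c \delta_c$ with $\gamma_c = \sum_{B \in \mathcal{M}_j,(B,M)=1} e_M(-cB\bar a)$ and $\delta_c = \sum_{A \in \mathcal{M}_{d+m-j},(A,M)=1} \mu(A) e_M(c\bar A)$. The $c=0$ contribution reduces to a Mertens-type coprime $\mu$-sum times $q^j/|M|$, and is power-small by standard $\mathbb F_q[T]$ cancellation. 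For $c \neq 0$, \cref{FKMThm} yields $|\delta_c| \ll q^{(3/16+\theta)m + 25(d+m-j)/32}$ for any $\theta>0$, while $\gamma_c$ is supported on a codimension-$j$ $\mathbb{F}_q$-subspace of $\mathbb{F}_q[T]/M$ (the top $j$ Laurent coefficients of $-c\bar a/M$ must vanish), with $|\gamma_c| \leq q^j$ there, so $\sum_c|\gamma_c| \leq q^m$. Combining, $|T_j| \ll q^{(31/32+\theta)m + 25d/32 - 25j/32}$, and the geometric sum over $j > d - \tau m$ is dominated by its smallest term, giving $|E_1''| \ll q^{(31/32 + 25\tau/32 + \theta)m + o(d+m)}$.

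Choosing $\tau$ in the open interval $(\tau_{\min}(q),\,(1-32\omega)/25)$, where $\tau_{\min}(q)$ is the threshold from \cref{ThirdRes}, together with $\theta$ small enough, makes $(31/32 + 25\tau/32 + \theta)m < (1-\omega)m \leq d$, so $E_1''$ is also power-saving; adding the main term and the bound on $E_1'$ then finishes the proof. The hypothesis $q > p^2e^2(1 + 50/(1-32\omega))^2$ is designed to be exactly equivalent to $\tau_{\min}(q) < (1-32\omega)/25$, so that this interval is non-empty. The main obstacle is precisely this tight compatibility window: $\tau$ must be large enough for \cref{ThirdRes} to apply (pushing $\tau$ up) and small enough for the Fourier-bilinear bound to beat $q^d$ (pushing $\tau$ down), so the strict inequality in the assumption on $q$ (and the restriction $\omega < 1/32$) is essential to the entire argument.
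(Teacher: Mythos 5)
Your proposal is essentially the paper's own argument: the same convolution identity $\Lambda = -1*(\mu\cdot\deg)$, the same main-term extraction from the $k\le d$ range via \cref{MainTermPrimesLD}, and the same split of the $k>d$ range into a piece handled by additive-Fourier detection of the congruence together with \cref{FKMThm}, and a complementary piece handled by the M\"obius level-of-distribution estimate, with the hypothesis on $q$ arising from exactly the compatibility constraint you identify. The only small inaccuracy is the citation: the fixed-degree congruence sum $\sum_{A\in\mathcal M_{d+m-j},\,A\equiv a\bar B}\mu(A)$ is not literally an instance of \cref{ThirdRes} (stated for $|f|\le X$) and should instead be bounded via \cref{LinearFormsMobThm}, which is what the paper does and which supplies the power-saving exponent you invoke.
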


\begin{proof}

We can assume $\ell = m$, and use \cref{ConvId} to write our sum as
\begin{equation}
\sum_{\substack{f \in \mathcal{M}_{m+d} \\ f \equiv a \ \mathrm{mod} \ M}} \Lambda(f) = 
- \sum_{k = 1}^{d+m} k \sum_{\substack{A \in \mathcal{M}_k \\ (A,M) = 1}} \mu(A) \sum_{\substack{B \in \mathcal{M}_{m+d-k} \\ AB \equiv a \ \mathrm{mod} \ M}} 1
\end{equation}
so by \cref{MainTermPrimesLD} the range $k \leq d$ contributes the main term.
%

The (absolute value of the) contribution of any $k>d$ is
\begin{equation}
\begin{split} 
&q^{d-k} \left| \sum_{\substack{A \in \mathcal{M}_k \\ (A,M) = 1}} \mu(A)  \sum_{\substack{ \psi \colon \mathbb F_q[T]/ M \to \mathbb C^\times \\ \psi(f) =0 \textrm{ if } d(f)<m+d-k} } \psi \left(a \overline A \right)  \overline{ \psi  ( T^{m+d-k})} \right| \leq \\
&q^{d-k} \sum_{\substack{ \psi \colon \mathbb F_q[T]/ M \to \mathbb C^\times \\ \psi(f) =0 \textrm{ if } d(f)<m+d-k} }  \left|   \sum_{\substack{A \in \mathcal{M}_k \\ (A,M) = 1}} \mu(A)   \psi \left(a \overline A \right) \right|
\end{split}
\end{equation} 
so by \cref{FKMThm} we get
\begin{equation} 
\ll q^{d-k} q^{k-d} q^{ \left(\frac{3}{16}+ \theta\right)m + \frac{25}{32}k } = q^{ \left(\frac{3}{16}+ \theta\right)m + \frac{25}{32}k }
\end{equation}
which gives a power saving as long as \begin{equation}\label{primes-range-splitting}  \left(\frac{3}{16}+ 2\theta\right)m + \frac{25}{32}k < d.\end{equation}

The (absolute value of the) contribution of any other $k$ is at most
\begin{equation}  
\sum_{\substack{B \in \mathcal{M}_{m+d-k} \\ (B,M)=1}} \left| \sum_{\substack{A \in \mathcal{M}_k \\ AB \equiv a \ \mathrm{mod} \ M} } \mu(A)\right| =   \sum_{\substack{B \in \mathcal{M}_{m+d-k} \\ (B,M)=1}} \left| \sum_{g \in \mathcal{M}_{k-m} } \mu(b+g M )\right| 
\end{equation}
where $b$ is the unique monic polynomial of degree $m$ congruent to $a B^{-1}$ modulo $m$.  We apply \cref{LinearFormsMobThm} with some fixed $\beta>0$ and with \begin{equation} \epsilon = \delta =  \frac{32}{25} \left(   \frac{1}{32} - \omega  - 2 \theta  \right).  \end{equation} 
Then we because \eqref{primes-range-splitting} does not hold, we have
\begin{equation*} 
k - m \geq \frac{32}{25} \left( d -  \left(\frac{31}{32}+ 2\theta\right)m \right) \geq 
\frac{32}{25} \left( (1-\omega) m -  \left(\frac{31}{32}+ 2\theta\right)m \right) = \epsilon m 
\end{equation*} 
so the conditions of \cref{LinearFormsMobThm} are satisfied as long as 
\begin{equation} 
q> \left( pe  \left( 1+ \frac{25}{16} \frac{1}{   \frac{1}{32} - \omega  - 2 \theta } \right) \right)^{ \frac{2}{ 1-2 \beta}}. 
\end{equation}
Because we may take $\beta$ and $\theta$ arbitrarily small, it suffices to have 
\begin{equation} q>  p^2e^2  \left( 1+ \frac{25}{16} \frac{1}{   \frac{1}{32} - \omega } \right)^2 = p^2e^2  \left( 1+ \frac{50}{   1- 32 \omega } \right)^2 \end{equation}

Summation over $k$ gives only an extra logarithmic factor, so it preserves our power savings.
\end{proof}

\section{Twins}

\subsection{Chowla sums over primes}

We establish cancellation in M\"{o}bius autocorrelation over primes.

\begin{cor} \label{MobPrimCor}

Fix $\tilde{\epsilon}, \tilde{\delta}> 0$,  $0< \alpha<1$, $0 < \beta< 1/2$, and a positive integer $n$. Let $q$ be a power of an odd prime $p$ such that \[q > \left(  p(n+1) e \max \left( 1+ \frac{2+ 2\alpha + 4 \tilde{\epsilon}^{-1} }{1-\alpha} , \frac{1 + \alpha + 2 \tilde{\epsilon}^{-1} + 2 \tilde{\delta}^{-1} }{ 1-\alpha}   \right) \right)^{\frac{2}{ 1- 2\beta}}. \]
Take nonnegative integers $d,m_1, \dots, m_n,k_1, \dots, k_n$ with 
\begin{equation}
d \geq \max\{\tilde{\epsilon} m_1, \dots, \tilde{\epsilon} m_n, \tilde{\delta} k_1, \dots, \tilde{\delta} k_n \}, \quad k_i \neq d+m_i, \ 1 \leq i \leq n,
\end{equation}
and distinct coprime pairs $(a_i, M_i) \in \mathcal{M}_{k_i} \times \mathcal{M}_{m_i}$ for $1 \leq i \leq n$.

Furthermore let $m,k$ be nonnegative integers with $m \leq \alpha d, \ k < m+d$,
let $M \in \mathcal{M}_m$, 
and let $a \in \mathcal{M}_k$ with $(a,M)$ distinct from $(a_i,M_i)$ for every $1 \leq i \leq n$.
Then
\begin{equation}
\sum_{g \in \mathcal{M}_{d}} \Lambda(a + gM) \prod_{i=1}^n \mu \left(a_i + gM_i \right) \ll | \mathcal{M}_d|^{1 - \frac{\beta (1-\alpha) }{2p} } 
\end{equation}
as $d \to \infty$, with the implied constant depending only on $\epsilon, \delta, \alpha, n$ and $q$. 

\end{cor}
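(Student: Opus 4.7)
The plan is to apply the convolution identity $\Lambda = -1 * (\mu \cdot \deg)$ as in \cref{PrimesLODCor}, writing
\begin{equation*}
S := \sum_{g \in \mathcal{M}_d} \Lambda(a+gM) \prod_{i=1}^n \mu(a_i + gM_i) = -\sum_{k=1}^{d+m} k \sum_{\substack{A \in \mathcal{M}_k \\ (A,M)=1}} \mu(A) \sum_{\substack{g \in \mathcal{M}_d \\ A \mid a+gM}} \prod_{i=1}^n \mu(a_i + gM_i),
\end{equation*}
where the coprimality $(A,M)=1$ is forced by $(a,M)=1$. The next step is to split the outer sum at a threshold $K \asymp (d+m)/2$, treating ``Type I'' ($k \leq K$, small divisor $A$) by a direct substitution into arithmetic progressions, and ``Type II'' ($k > K$, small cofactor $B := (a+gM)/A$) by a reindexing that absorbs $\mu(A)$ into the M\"obius product as a new factor.

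For Type I, for each fixed $A$ the condition $A \mid a+gM$ with $g \in \mathcal{M}_d$ forces $g = g_A + Ah$, where $g_A \equiv -a \overline{M} \pmod A$ has degree less than $k$ and $h \in \mathcal{M}_{d-k}$; the inner sum becomes $\sum_h \prod_i \mu((a_i + g_A M_i) + h \cdot AM_i)$, to which \cref{LinearFormsMobThm} applies with $n$ factors since $(a_i + g_A M_i)/(AM_i) = (a_i/M_i + g_A)/A$ inherits distinctness from the $a_i/M_i$. For Type II, reparametrizing by $B \in \mathcal{M}_j$ with $j := d+m-k$, the automatic coprimality $(B,M)=1$ yields $g = g_B + Bh$ with $h \in \mathcal{M}_{d-j}$, and the identity $a+gM = B(A_0 + hM)$ with $A_0 := (a + g_B M)/B$ converts $\mu(A)$ into $\mu(A_0 + hM)$. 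The inner sum then reads
\begin{equation*}
\sum_{h \in \mathcal{M}_{d-j}} \mu(A_0 + hM) \prod_{i=1}^n \mu\bigl((a_i + g_B M_i) + h \cdot BM_i\bigr),
\end{equation*}
with $n+1$ M\"obius factors---explaining the $p(n+1)e$ in the hypothesis. The crucial new distinctness check, that $(A_0, M)$ differs from each $(a_i + g_B M_i, BM_i)$, reduces via $BA_0 = a + g_B M$ to the given inequality $a/M \neq a_i/M_i$, so \cref{LinearFormsMobThm} applies with $n+1$ factors.

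Summing the Type I bound over $A \in \mathcal{M}_k$ and $k \leq K$ gives $\ll q^{d - (d-K)\beta/p}$ up to polynomial factors, while summing Type II over $B \in \mathcal{M}_j$ and $j < d+m-K$ gives $\ll q^{d - (K+1-m)\beta/p}$. Balancing at $K \asymp (d+m)/2$ makes both saving exponents at least $(d-m)\beta/(2p) \geq (1-\alpha)d\beta/(2p)$, using $m \leq \alpha d$, producing the claimed bound $|\mathcal{M}_d|^{1-\beta(1-\alpha)/(2p)}$. The main obstacle is the bookkeeping to verify the degree hypotheses of \cref{LinearFormsMobThm}: the new parameters $m_i' = k + m_i$ (Type I) or $j + m_i$ and $m$ (Type II), together with $k_i' = \max(k_i, k+m_i)$ and $k_{n+1}' < d+m-j$, must be controlled by effective constants $\epsilon' = (1-\alpha)/(1+\alpha+2/\tilde\epsilon)$ and $\delta' = (1-\alpha)\tilde\delta/2$, whose reciprocals $1+2/\epsilon'$ and $1/\epsilon'+1/\delta'$ assemble precisely into the explicit $\max$ appearing in the lower bound on $q$.
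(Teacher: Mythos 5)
Your proposal matches the paper's own proof essentially step for step: the same convolution identity $\Lambda = -1 * (\mu\cdot\deg)$, the same split of the resulting divisor sum at a threshold near $(d+m)/2$, the same Type~I reparametrization $g = N + Ah$ applying \cref{LinearFormsMobThm} with $n$ M\"obius factors, the same Type~II reparametrization by the cofactor that converts $\mu(A)$ into an $(n+1)$-th M\"obius factor $\mu(A_0 + hM)$ (explaining the $p(n+1)e$ in the hypothesis), the same distinctness and degree bookkeeping via $\epsilon'=(1-\alpha)/(1+\alpha+2\tilde\epsilon^{-1})$, $\delta'$ effectively $(1-\alpha)\tilde\delta/2$, and the same balancing yielding the saving $(1-\alpha)d\beta/(2p)$. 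The one thing you elide is that the paper picks the representative $N$ (resp. $L$) modulo $A$ (resp. $B$) with a little extra degree freedom, $d(N)\le b+n$ with $d(N)\ne k_i - m_i$ (and additionally $d(L)\ne d$ for Type II), specifically so that the side condition $k_i'\neq d'+m_i'$ of \cref{LinearFormsMobThm} holds after the shift; your fixed choice of $g_A$ of degree $<k$ could land on a forbidden degree, so this small wiggle room is necessary rather than optional bookkeeping.
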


\begin{proof}

We can assume $(a,M) = 1$, set $z = \left\lfloor \frac{d+m}{2} \right\rfloor$, and use \cref{ConvId} to write
\begin{equation*}
\Lambda(a + gM) = - \sum_{b=1}^{z} b \sum_{\substack{ B \in \mathcal{M}_b \\ B \mid a + gM }} \mu(B) - 
\sum_{c=0}^{d + m - z - 1} (d + m - c) \sum_{\substack{C \in \mathcal{M}_c \\ C \mid a+gM}} \mu\left(\frac{a+gM}{C} \right).
\end{equation*}
For every $B$ above, taking a monic $N = N(B) \in \F_q[T]$ with 
\begin{equation}
MN \equiv -a \ \mathrm{mod} \ B, \quad \ d(N) \neq k_i - m_i, \quad d(N) \leq b + n,
\end{equation}
and writing $g = N + hB$ with $h \in \mathcal{M}_{d-b}$,
we see that the sum over $b$ contributes
\begin{equation}
\sum_{b=1}^{\left\lfloor \frac{d+m}{2} \right\rfloor} b \sum_{B \in \mathcal{M}_b} \mu(B) \sum_{h \in \mathcal{M}_{d-b}} 
\prod_{i=1}^n \mu \left(a_i + NM_i + hBM_i \right)
\end{equation}
so we can apply \cref{LinearFormsMobThm} to the innermost sum, taking 
\begin{equation} 
\epsilon = \frac{1 - \alpha}{ 1+ \alpha + 2 \tilde{\epsilon}^{-1}} \leq \frac{d-m }{d+m+2m_i} = \frac{d - \frac{d+m}{2} } { \frac{d+m}{2} + m_i }  \leq \frac{ d-b} { b + m_i}  
\end{equation}
and  
\begin{equation} 
\begin{split}
\delta = \min \left( \frac{1-\alpha} {2 \tilde{\delta}^{-1}}, \frac{1 - \alpha}{ 1+ \alpha + 2 \tilde{\epsilon}^{-1}} \right) &\leq 
\min \left( \frac{ d-b}{k_i}, \frac{d-b}{b+m_i} \right) \\
&\leq \min \left( \frac{d-b}{ \deg a_i}, \frac{d-b}{\deg N M_i} + o(1) \right).
\end{split}
\end{equation} 

To obtain the condition on $q$, note that, when calculating $\max( 1+ \frac{2}{\epsilon}, \frac{1}{\epsilon}+ \frac{1}{\delta})$, we can treat $\delta$ as $ \frac{1-\alpha} {2 \tilde{\delta}^{-1}}$, because if the other term is smaller, 
then $\frac{1}{\epsilon} + \frac{1}{\delta}$ is dominated by $1 + \frac{2}{\epsilon}$ anyways. 


Taking $L \in \F_q[T]^+$ with
\begin{equation}
ML \equiv -a \ \mathrm{mod} \ C, \quad \ d(L) \neq k_i - m_i, \quad d(L) \neq d, \quad d(L) \leq c + n + 1,
\end{equation}
and writing $g = L + hC$ with $h \in \mathcal{M}_{d-c}$,
we see that the sum over $c$ contributes
\begin{equation*}
\sum_{c=0}^{d + m - z - 1} (d + m - c) \sum_{C \in \mathcal{M}_c} \sum_{h \in \mathcal{M}_{d-c}} 
\mu\left(\frac{a+ LM}{C} + hM \right)  \prod_{i=1}^n \mu \left(a_i + LM_i  + hCM_i \right)
\end{equation*}
and we again apply \cref{LinearFormsMobThm} to the innermost sum, with the same values of $\epsilon$ and $\delta$.  Everything is the same as before, with $c$ replaced by $b$, except for two things.
\begin{enumerate}

\item We can no longer use the inequality $b \leq \frac{m+d}{2}$, but rather the slightly weaker inequality $c \leq \frac{m+d+1}{2} $ .

\item The term $\mu\left(\frac{a+ LM}{C} + hM \right) $ appears, which means we must check that \begin{equation} (d-c) \geq \epsilon m, (d-c) \geq \epsilon d \left( \frac{a + LM }{ C} \right). \end{equation}

\end{enumerate}

However (1) is no difficulty as we may assume $d$ sufficiently large and perturb the parameters $\epsilon$ and $\delta$ slightly to insure the inequalities of \cref{LinearFormsMobThm} still hold. For this reason we will assume $ c \leq \frac{m+d}{2}$ while handling (2) as well. 

 To check that $(d-c)  \geq \epsilon m $ we observe that, because  $ c \leq \frac{m+d}{2} $ and $ d < \alpha m$, we have  $d-c \geq \frac{ d-m }{2} $, and thus $\frac{d-c}{m} \geq \frac{1-\alpha}{ 2\alpha} \geq \frac{1-\alpha}{1+\alpha} \geq \epsilon  $.

To check that \begin{equation} d \left( \frac{a+ LM }{C} \right) \leq \max( m+d  - c, m +n) \leq \delta^{-1} (d-c) \end{equation} 
we observe 
\begin{equation*} 
\delta \leq  \frac{1-\alpha}{1+ \alpha} = \frac{2 (d-m)}{d+m}=   \frac{ 1}{  \frac{ m}{ (d-m)/2} +1 } \leq  \frac{1}{\frac{m}{d-c} +1 }  = \frac{d-c}{m+d-c}.
\end{equation*}
\end{proof}

\subsection{Singular series}

For nonzero $a \in \F_q[T]$ define
\begin{equation}
\mathfrak{S}_q(a) = \prod_{P \mid a}\left(1 - |P|^{-1} \right)^{-1} \prod_{P \nmid a} \left( 1 - \left(|P|-1 \right)^{-2} \right).
\end{equation}

The following propositon allows us to identify the main term in our twin prime number theorem.
An analogous result over the integers is proved in \cite[Lemma 2.1]{GY03}.

\begin{prop} \label{SingSirProp}

Fix a prime power $q$.
Then for an integer $n \geq 1$ and a nonzero $a \in \F_q[T]$ we have
\begin{equation}
\sum_{k = 1}^n k \sum_{\substack{M \in \mathcal{M}_k \\ (M,a) = 1}} \frac{\mu(M)}{\varphi(M)} = -\mathfrak{S}_q(a) +
(q-1)^{o\left(n +d(a) \right) -n}.
\end{equation}
\end{prop}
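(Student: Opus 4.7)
The plan is to mimic the generating-function argument used to prove \cref{MainTermPrimesLD}. First I would form
\begin{equation*}
G(u) = \sum_{k \geq 0} u^k \sum_{\substack{M \in \mathcal{M}_k \\ (M,a) = 1}} \frac{\mu(M)}{\varphi(M)} = \prod_{P \nmid a}\left(1 - \frac{u^{d(P)}}{|P|-1}\right),
\end{equation*}
where the Euler product follows because $\mu$ is supported on squarefree $M$ and $\varphi(P) = |P|-1$. The target sum is exactly the sum of the first $n$ coefficients of $uG'(u)$.

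To extract the main term, I would peel off the singularity of $G$ at $u=1$ using the zeta identity $1-u = \prod_P(1 - u^{d(P)}/|P|)$ (which is the identity used in \cref{MainTermPrimesLD}), writing $G(u) = (1-u)H(u)$ with
\begin{equation*}
H(u) = \prod_{P \mid a}\left(1 - \frac{u^{d(P)}}{|P|}\right)^{-1} \prod_{P \nmid a} \frac{1 - u^{d(P)}/(|P|-1)}{1 - u^{d(P)}/|P|}.
\end{equation*}
A direct calculation shows $H(1) = \mathfrak{S}_q(a)$, using the identity $\tfrac{(|P|-2)|P|}{(|P|-1)^2} = 1 - (|P|-1)^{-2}$ on each factor with $P \nmid a$. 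Differentiating $G = (1-u)H$ gives $G'(1) = -H(1) = -\mathfrak{S}_q(a)$, and this is exactly the total sum over all $k \geq 0$ of the coefficients of $uG'(u)$. Our truncated sum therefore differs from $-\mathfrak{S}_q(a)$ only by the tail of coefficients of $uG'(u)$ in degrees $> n$.

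To bound the tail, I would check that $H$ is holomorphic on $|u| < q$: each local factor in the infinite product over $P \nmid a$ deviates from $1$ by $O(|u|^{d(P)}/|P|^2)$, so the product converges uniformly on compact subsets of this disk. For $r$ slightly less than $q$, the only growth of $|H|$ and $|H'|$ on $|u| = r$ comes from the finite product over $P \mid a$, bounded by $(1 - r/q)^{-\omega(a)} = (q-1)^{o(d(a))}$ since $\omega(a) = o(d(a))$. Cauchy's estimate then yields $|[u^k](uG'(u))| \ll (q-1)^{o(d(a))} r^{-k}$, and summing over $k > n$ gives an error $\ll (q-1)^{o(d(a))} r^{-n}$, which for $r$ taken sufficiently close to $q$ is absorbed into the claimed bound $(q-1)^{o(n+d(a)) - n}$. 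The subtle step is the analytic continuation encoded in the factorization $G = (1-u)H$: the naive Euler product for $G$ only converges conditionally at $u=1$, but the zeta identity converts the divergent logarithmic derivative into the well-defined finite value $-\mathfrak{S}_q(a)$, and it is this cancellation that makes the proposition sensible in the first place.
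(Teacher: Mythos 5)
Your argument is correct and follows essentially the same route as the paper's proof: form the generating function, factor out $(1-u)$ via the zeta identity, evaluate the regular factor at $u=1$ to extract $-\mathfrak{S}_q(a)$, and bound the tail of coefficients by Cauchy's estimate on a circle of radius $r$. The only genuine difference is in how the Euler factors are regrouped to control $H$ on $|u|=r$ uniformly in $a$. The paper collects $\prod_P\bigl(1 - u^{d(P)}/\bigl(|P|(|P|-1)(1-u^{d(P)}|P|^{-1})\bigr)\bigr)$ over \emph{all} primes and then separately multiplies by $\prod_{P\mid a}\bigl(1-u^{d(P)}(|P|-1)^{-1}\bigr)^{-1}$; since the latter has poles at radius $q-1$ whenever $a$ has a linear prime factor, the paper restricts to $r<q-1$, which is why the statement has base $q-1$. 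Your split puts the $P\mid a$ factor in the form $\prod_{P\mid a}(1-u^{d(P)}|P|^{-1})^{-1}$, whose poles sit at radius $q$, and the ratio factors over $P\nmid a$ only have zeros at radius $q-1$ — harmless for an upper bound — so your contour can be pushed closer to $q$; the stated bound is a fortiori satisfied. One small imprecision worth noting: the deviation of each $P\nmid a$ factor from $1$ is $u^{d(P)}/\bigl(|P|(|P|-1)(1-u^{d(P)}|P|^{-1})\bigr)$, so it is $O(r^{d(P)}/|P|^2)$ only with an implied constant depending on $r$ (blowing up as $r\to q$ through the degree-one primes); since $r$ is held fixed strictly below $q$ this is immaterial, but it should be said explicitly before letting $r$ approach $q$.
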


\begin{proof} 
We are interested in the sum of the first $n$ coefficients of
\begin{equation}
\begin{split}
Z(u) = u \frac{d}{du} \sum_{k = 1}^\infty u^k \sum_{\substack{M \in \mathcal{M}_k \\ (M,a) = 1}} \frac{\mu(M)}{\varphi(M)} &= 
u \frac{d}{du} \prod_{ P \nmid a}  \left( 1- u^{d(P)} \left(|P|-1 \right)^{-1} \right) \\
&= u \frac{d}{du} \left( (1-u)G(u) \right)
\end{split}
\end{equation}
where $G(u)$ is
\begin{equation*}
\prod_{P \mid a} \left(1- u^{d(P)} |P|^{-1} \right)^{-1}  
\prod_{P \nmid a} \left(1- u^{d(P)} |P|^{-1} \right)^{-1}  
\left( 1- u^{d(P)} \left( |P|-1 \right)^{-1} \right).
\end{equation*}
The sum of all coefficients equals
\begin{equation}
Z(1) = -G(1) = -\mathfrak{S}_q(a)  
\end{equation}
because 
\begin{equation} 
\left(1 - |P|^{-1} \right)^{-1} \left( 1 -  \left( |P|-1 \right)^{-1} \right) = 1 - \left( |P|-1 \right)^{-2}. 
\end{equation}

As in \cref{MainTermPrimesLD}, to prove the bound for the error term it suffices to prove that $G(u)$ is bounded subexponentially in $a$ for $u$ on each circle of radius $<q-1$. 

Note that
\begin{equation}
\begin{split}  
&\left(1- u^{d(P)} |P|^{-1} \right)^{-1}  \left(1- u^{d(P)} \left(|P|-1 \right)^{-1} \right)  = \\
&1  -  \frac{ u^{d(P)} }{ |P| \left(|P|-1 \right)  \left(1- u^{d(P)} |P|^{-1} \right)}
\end{split}
\end{equation} 
so $G(u)$ can be rewritten as  
\begin{equation*}
\prod_P \left( 1  -  \frac{ u^{d(P)} }{ |P| \left(|P|-1 \right)  \left(1- u^{d(P)} |P|^{-1} \right)} \right)
\prod_{P \mid a} \left( 1- u^{d(P)} \left( |P|-1 \right)^{-1} \right)^{-1}.
\end{equation*}
The first product above is independent of $a$ and converges on the disc where $|u|<q$. 
On a circle of radius $r$, the value of each term in the second product is at most 
\begin{equation}
\left(1 -\frac{r^{d(P)}}{q^{d(P)}-1} \right)^{-1} \leq \left( 1- \frac{r}{q-1} \right)^{-1}
\end{equation}
and thus is bounded, and the number of terms is $o(d(a))$, so the product is subexponential in $a$.

\end{proof}

\subsection{Hardy-Littlewood}

\begin{thm}

For every odd prime number $p$, and power $q$ of $p$ with 
\begin{equation}
q> 685090 p^2, 
\end{equation}
there exists $\lambda > 0$ such that the following holds.
For nonnegative integers $d > \ell$, and $a \in \mathcal{M}_\ell$ we have
\begin{equation}
\sum_{f \in \mathcal{M}_d} \Lambda(f)\Lambda(f + a) = \mathfrak{S}_q(a) q^d + O \left( q^{(1 - \lambda)d} \right)
\end{equation}
as $d \to \infty$, with the implied constant depending only on $q$.

\end{thm}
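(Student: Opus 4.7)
The starting point is to apply the convolution identity $\Lambda = -1 * (\mu \cdot \deg)$ to the second factor, obtaining
\[
\sum_{f \in \mathcal{M}_d} \Lambda(f)\Lambda(f+a) \;=\; -\sum_{k=1}^{d} k \sum_{A \in \mathcal{M}_k} \mu(A) \sum_{\substack{f \in \mathcal{M}_d \\ A \mid f+a}} \Lambda(f).
\]
Since $\ell = d(a) < d$, each $f \in \mathcal{M}_d$ with $A \mid f+a$ has a unique representation $f = gA - a$ with $g \in \mathcal{M}_{d-k}$. The proof will then split the outer $k$-sum at $k_0 = (1-\omega)d$ for a small positive $\omega$ to be fixed at the end.

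For $1 \le k \le k_0$, restrict to $(A,a) = 1$; the remaining contribution is negligible since on the progression $f \equiv -a \pmod{A}$ the value $\Lambda(f)$ can be nonzero only if $f$ is a prime power of a prime dividing $\gcd(A,a)$. Corollary~\ref{PrimesLODCor} then yields
\[
\sum_{\substack{f \in \mathcal{M}_d \\ f \equiv -a \pmod{A}}} \Lambda(f) \;=\; \frac{q^d}{\varphi(A)} + O\!\left(q^{(1-\lambda_1)d}\right).
\]
Weighting the main term by $-k\mu(A)$ and summing over $A \in \mathcal{M}_k$ with $(A,a)=1$ and over $k \le k_0$, Proposition~\ref{SingSirProp} converts this to $\mathfrak{S}_q(a)\, q^d$ up to exponentially small error; the cumulative error absorbs the polynomial-in-$d$ factor coming from the $k$-summation.

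For $k_0 < k \le d$ the inner progression is too short for Corollary~\ref{PrimesLODCor}, and the strategy is instead to swap the $A$- and $g$-summations and exploit Möbius cancellation. For fixed $g$ with $(g,a)=1$, the inner sum $\sum_{A \in \mathcal{M}_k} \mu(A)\, \Lambda(gA - a)$ is of precisely the form treated by Corollary~\ref{MobPrimCor} with $n=1$, main factor $(a,M) = (-a,g)$, and Möbius factor $(a_1,M_1) = (0,1)$. The degree hypotheses hold with $\tilde\epsilon \asymp (1-\omega)/\omega$, $\tilde\delta \asymp k/\ell$, and $\alpha \asymp \omega/(1-\omega)$, all fine because $k > (1-\omega)d$ forces $g$ to be short. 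Corollary~\ref{MobPrimCor} then supplies a power saving in $k$; summing over $g \in \mathcal{M}_{d-k}$ and $k > k_0$ preserves a power saving in $d$. The terms with $\gcd(g,a) > 1$ contribute at most a polynomial in $d$, since then $\Lambda(gA-a)$ vanishes unless $gA-a$ is a prime power of a prime dividing $\gcd(g,a)$, pinning down $A$ to $O(d)$ values per such prime.

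The main obstacle is numerical: the single hypothesis $q > 685090\, p^2$ must simultaneously satisfy the threshold of Corollary~\ref{PrimesLODCor} for some $\omega \in (0, 1/32)$, namely $q > p^2 e^2 (1 + 50/(1-32\omega))^2$, and the considerably weaker threshold of Corollary~\ref{MobPrimCor} (which with $n=1$ and $\alpha$ small works out to roughly $q > 36 p^2 e^2$). A direct numerical check shows that $685090$ leaves some $\omega$ slightly below $1/32$ available; taking $\lambda = \min(\lambda_1, \lambda_2) > 0$, where $\lambda_1$ is the saving from the small range and $\lambda_2$ is the saving coming from Corollary~\ref{MobPrimCor} in the exceptional range, then yields the claimed power-saving error term.
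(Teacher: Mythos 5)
Your overall architecture is the paper's: expand one $\Lambda$ factor by the convolution identity $\Lambda=-1*(\mu\cdot\deg)$, evaluate the small-$k$ range by \cref{PrimesLODCor} and \cref{SingSirProp} to extract $\mathfrak S_q(a)q^d$, and kill the large-$k$ range by \cref{MobPrimCor} with $n=1$ (expanding $\Lambda(f+a)$ instead of $\Lambda(f)$, and the resulting non-monic residue $-a$, are immaterial). But your split point creates a genuine gap. \cref{PrimesLODCor} requires, in its notation, $d\ge(1-\omega)m$ with $\omega<1/32$, where $m$ is the degree of the modulus and $d$ the degree of the free variable, i.e.\ the modulus can have degree at most $\tfrac{1}{2-\omega}$ of the total degree: the paper's level of distribution for $\Lambda$ is only $\tfrac12+\delta$ with $\delta<\tfrac1{126}$ (\cref{FourthRes}); it is the M\"obius function, not von Mangoldt, that gets level close to $1$. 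In your application the modulus is $A\in\mathcal M_k$ and the free variable has degree $d-k$, so \cref{PrimesLODCor} is usable only for $k\le d/(2-\omega)$, roughly $k\lesssim 0.51\,d$. Your split at $k_0=(1-\omega)d$ therefore leaves the entire middle range $d/(2-\omega)<k\le(1-\omega)d$ covered by neither tool: \cref{PrimesLODCor} does not apply there, and your use of \cref{MobPrimCor} only reaches $k>(1-\omega)d$.

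The repair is forced and it changes your numerics. To cover $k$ down to just above $d/(2-\omega)$ with \cref{MobPrimCor}, the modulus there has degree $d-k$ and the summation variable degree $k$, so one needs $\alpha\ge (d-k)/k$, which near the split point means $\alpha=1-\omega$ close to $1$; the resulting threshold is $q>\bigl(2pe(4/\omega-1)\bigr)^2$, nothing like the ``roughly $36p^2e^2$'' you get from taking $\alpha$ small. The paper balances this against the \cref{PrimesLODCor} threshold $q>p^2e^2\bigl(1+50/(1-32\omega)\bigr)^2$ by solving
\begin{equation*}
2\left(\frac{4}{\omega}-1\right)=1+\frac{50}{1-32\omega},
\end{equation*}
giving $\omega\approx 0.0261$, and this is exactly where the constant $685090$ comes from. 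So your concluding paragraph, which treats the \cref{MobPrimCor} constraint as negligible and lets $\omega$ run up to just below $1/32$, rests on the faulty split; with the correct split the two thresholds are comparable and must be optimized jointly.
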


\begin{proof}

Using \cref{ConvId} our sum becomes
\begin{equation}
- \sum_{k=0}^d k \sum_{M \in \mathcal{M}_k} \mu(M) \sum_{N \in \mathcal{M}_{d - k}} \Lambda(a + NM). 
\end{equation}
The appearance of the factor $\mu(M)$ allows us to consider only squarefree $M$,
so by \cref{PrimesLODCor},
the contribution of the range $0 \leq k \leq d/(2 - \omega)$ is
\begin{equation}
-q^d \sum_{k=0}^{d/(2-\omega)} k \sum_{\substack{M \in \mathcal{M}_k \\ (a, M) = 1}} \frac{\mu(M)}{\varphi(M)} + 
O\left(d q^{\frac{d}{2 - \omega}} E_q  \right)
\end{equation}
which equals by \cref{SingSirProp} to
\begin{equation}
q^d\mathfrak{S}_q(a) + 
O\left(d q^{\frac{d}{2 - \omega}} E_q  + q^dE'_q \left( \frac{d}{2 - \omega} \right) \right).
\end{equation}

The error term is of power savings size.

In the other range, we need to prove a power savings bound for 
\begin{equation}
\sum_{k > d/(2 - \omega)}^d k \sum_{N \in \mathcal{M}_{d - k}} \sum_{M \in \mathcal{M}_k} \Lambda(a + MN) \mu(M)  
\end{equation}
and this is done by applying \cref{MobPrimCor} to the innermost sum with 
\begin{equation}
n=1, \quad \tilde{\epsilon}=\infty, \quad \tilde{\delta}=\infty,  \quad \alpha= 1-\omega > \frac{d-k}{k}
\end{equation}
and $\beta>0$ but very small. 
This requires
\begin{equation} q> \left( 2 p  e \left( 1 + \frac{2 +2 -2\omega}{\omega}  \right) \right)^2=  \left( 2 p e \left( \frac{4}{\omega}-1\right) \right)^2 \end{equation}
and \cref{PrimesLODCor} requires
\begin{equation} q> p^2e^2  \left( 1+ \frac{50}{   1- 32 \omega } \right)^2\end{equation}
so the optimal bound is obtained by solving \begin{equation} 2\left(\frac{4}{\omega}-1\right) =  1+ \frac{50}{   1- 32 \omega } \end{equation} whose solution is \begin{equation} \omega= \frac{103- \sqrt{ \frac{30803}{3} } }{64} = .0261\dots \end{equation} which satisfies \begin{equation} \left(  2 e \left( \frac{4}{\omega}-1\right)\right)^2 < 685090.
\end{equation} 
\end{proof}

\section{Results for Small $q$}

We prove \cref{SmallqRes1}.

\begin{proof}

Set $d(f) = d$.
We need to show that by suitably changing the coefficients of $f$ in degree at most $\eta d$,
one can arrive at a polynomial with a given (nonzero) M\"{o}bius value.

Let $c < \eta d$ be the largest even integer not divisible by $3$. 
Note that
\begin{equation} \label{cEstEq}
c \geq \eta d - 4.
\end{equation}
We take the coefficient of $T^c$ in $f$ to be $1$,
and the coefficient of $T^k$ to be $0$ for every $k < c$ that is not divisible by $3$.
Hence, it is enough to show that
\begin{equation} \label{SetSignChangeEq}
1,-1 \in \left \{ \mu(f + b^3) : b \in \mathcal{M}_{\lfloor c/3 \rfloor} \right\}.
\end{equation} 
By \cref{PreparingMobiusToArithProgLem} with $M = 1 , \ a = f$, and $g = b^3$, our set equals
\begin{equation} 
\left \{ S \cdot \chi(w + b^3) : b \in \mathcal{M}_{\lfloor c/3 \rfloor} \right \}
\end{equation}
and since the highest power of $T$ that divides $(f + b^3)' = f'$ is $T^{c-1}$,
we conclude that $S = \pm 1$ and from \cref{PreparingMobiusToArithProgRem} that $\chi$ is a nonprincipal character (to a squarefree modulus $E$). 
Arguing as in \cref{prootexteq} to 'extract third roots', we are thus led to consider
 \begin{equation} 
\left \{ \chi(\widetilde w + b) : b \in \mathcal{M}_{\lfloor c/3 \rfloor} \right \}.
\end{equation}

From \cref{PreparingMobiusToArithProgLem} we further conclude that
\begin{equation} \label{QEstEq}
d(E) \leq d - c + 1 \leq (1 - \eta)d + 5,
\end{equation}
and on the other hand
\begin{equation}
\lfloor c/3 \rfloor \geq \frac{c}{3} - 1 \geq \frac{\eta d- 4}{3} - 1 \geq \frac{\eta}{3}d - 3,
\end{equation}
so combining the two we get
\begin{equation}
\frac{\lfloor c/3 \rfloor}{d(E)} \geq \frac{\frac{\eta}{3}d - 3}{(1 - \eta)d + 5}.
\end{equation}
Since we have assumed that $3/7 < \eta < 1$, the right hand side of the above tends to a quantity greater than $1/4$ as $d \to \infty$.
Consequently, we can use the (function field version of the) Burgess bound (as stated for instance in \cite[Theorem 2]{Bur63}) to show that $1$ and $-1$ belong to the set above. 
Such a version is obtained in \cite{Hsu99}.
\end{proof}

Now we prove \cref{SmallqRes2}.

\begin{proof}

Set $k = d(P)$. For positive integers $d,n$, we seek cancellation in
\begin{equation}
\sum_{g \in \mathcal{M}_d} \mu(a + gP^n)
\end{equation}
where $a \in \F_q[T]^+$ satisfies $d(a) < nk$ and $a \equiv 1 \ \mathrm{mod} \ P^{n-2}$.
We assume first that $3 \mid n$,
and follow the proof of \cref{LinearFormsDistinctDerivatives} up to \cref{AdKanEq} getting
\begin{equation} \label{CopiedEq}
\sum_{d(h) < t} \chi_r(f + h)
\end{equation}
with $\chi_r$ a character mod $E_r$.
If $\chi_r$ is principal, then by \cref{PreparingMobiusToArithProgRem} we have
\begin{equation}
P^{2n}r' + a'P^n = D_r^{(1)} = AB^2, \quad A,B \in \F_q[T], \ A \mid P^n, \ P \nmid B
\end{equation}
and since $P^{n-3} \mid a'$, we conclude that
\begin{equation}
P^{3}r' + \frac{a'}{P^{n-3}} = \widetilde A \widetilde B^2, \quad \widetilde A, \widetilde B \in \F_q[T], \ \widetilde A \mid P.
\end{equation}
There are $\ll q^{d/2}$ choices of $r \in \mathcal{R}$ satisfying the above, so those can be neglected.

For $r \in \mathcal{R}$ with $\chi_r$ nonprincipal, we note that
\begin{equation}
d(E_r) \leq d \left( \mathrm{rad} \left(P^{2n}r' + a'P^n \right) \right) \leq d \left(P^3r' + \frac{a'}{P^{n-3}} \right) + k
\end{equation}
so for large enough $d$ we have $t/d(E_r) > 1/4$,
hence cancellation in \cref{CopiedEq} is guaranteed by Burgess.

Suppose now that $3 \mid n + \beta$ for some $\beta \in \{1,2\}$, and write
\begin{equation}
\sum_{g \in \mathcal{M}_d} \mu(1 + gP^n) = \sum_{d(g_0) < \beta k} \sum_{g_1 \in \mathcal{M}_{d - \beta k}} \mu(1 + g_0P^n + g_1P^{n + \beta})
\end{equation}
for $d \geq \beta$. 
We have thus reduced to the previous case with $a = 1 + g_0P^n$. 
\end{proof}

\begin{remark}
We see from the proof that $1$ is not the only residue class for which the argument works.
Also, the modulus does not have to be a power of a fixed prime,
but it has to be `multiplicatively close' to a cube.
\end{remark}

\appendix

\section{Orthogonality of M\"{o}bius and \\ inverse additive characters}

We explain how a variant of the results of \cite{FM98} carries over to function fields and gives \cref{FKMThm}.

A standard strategy in the treatment of sums such as those from \cref{FMeq} is to use a combinatorial identity for the M\"{o}bius function.
Following \cite{FM98}, we use Vaughan's identity, which for $f \in \F_q[T]^+$ gives
\begin{equation}
\mu(f) = - \mathop{\sum_{d(g) \leq \alpha} \sum_{d(h) \leq \beta}}_{gh \mid f} \mu(g)\mu(h) + 
\mathop{\sum_{d(g) > \alpha} \sum_{d(h) > \beta}}_{gh \mid f} \mu(g)\mu(h)
\end{equation}
where summation is over monic polynomials, 
and $\alpha, \beta$ are nonnegative integers with $\max\{\alpha, \beta\} < d(f)$.
For a proof (that is also valid for function fields) see \cite[Proposition 13.5]{IK04}.

By applying Vaughan's identity as in \cite[Section 6]{FM98},
we reduce our task to bounding sums of type I:
\begin{equation}
\Sigma^{(\text{I})}_{k,r} = \mathop{\sum_{f \in \mathcal{M}_k} \sum_{g \in \mathcal{M}_r}}_{(fg,M) = 1} \gamma_f \psi(\overline{fg})
\end{equation}
where $k \leq \frac{1}{8}d(M) + \frac{7}{16}d, \ k + r \leq d, \ |\gamma_f| \leq 1$ and sums of type II:
\begin{equation}
\Sigma^{(\text{II})}_{k,r} = \mathop{\sum_{f \in \mathcal{M}_k} \sum_{g \in \mathcal{M}_r}}_{(fg,M) = 1} \gamma_f \delta_g \psi(\overline{fg})
\end{equation}
with $k \geq \frac{1}{8}d(M) + \frac{7}{16}d, \ r \geq \frac{7}{16}d - \frac{3}{8}d(M), \ k + r \leq d, \ |\delta_g| \leq 1$.
For every $\epsilon > 0$, we need the bounds
\begin{equation}\label{appendix-required-bound}
\Sigma_{k,r}^{(\text{I})} \ll q^{\left(\frac{3}{16} + \epsilon \right)d(M) + \frac{25}{32}d}, \quad 
\Sigma_{k,r}^{(\text{II})} \ll q^{d + \epsilon d(M) - \frac{r}{2}} + q^{d + \left( \frac{1}{4} + \epsilon \right)d(M) - \frac{k}{2}}
\end{equation}
that are analogous to \cite[Equation 6.4]{FM98}.
The bounds \eqref{appendix-required-bound} then imply \eqref{FMeq} by the argument of \cite[Section 6]{FM98}.

\subsection{Sums  of type I}
Following first the bilinear shifting trick argument of  \cite[\S4]{FM98}, we obtain the inequality 
\begin{equation} \label{FirstBoundTypeOne} 
\Sigma^{(\text{I})}_{k,r} \ll \frac{1}{ V} \sum_{ \substack{ a \in \mathcal M_{d_A} \\ (a,M)=1}} \sum_{ f \in \mathcal M_k}  \sum_{g \in \mathcal M_r}  \left| \sum_{\substack{ b \in \mathcal M_{d_B}\\ ( a^{-1} g + b, M)=1 }} \psi \left( \overline{ a f (a^{-1} g + b) }  \right) \right| 
\end{equation} 
where, in analogy with the variables $A,B$ from \cite[(4.8)]{FM98},
\begin{equation}\label{dadb} 
d_A = \frac{3r - k}{4}, \quad d_B = \frac{k+r}{4} -1 
\end{equation} 
and 
\begin{equation} 
V = \# \left \{ ab : a \in \mathcal M_{d_A} , b \in \mathcal M_{d_B}, (a,M)=1 \right \}  \gg  q^{ d_A + d_B - \epsilon d(M)}.
\end{equation} 
The parameter $t$ and the term $e(-bt)$ from \cite{FM98} do not appear here, 
as they arise from the failure of an archimedean interval  to be perfectly invariant under a shift. 

Now following the H\"{o}lder's inequality argument from \cite[\S4.a]{FM98}, we get as in \cite[Equation 4.6]{FM98}, the bound
\begin{equation} \label{FM46}
\begin{split}
&\left|  \sum_{ \substack{ a \in \mathcal M_{d_A} \\ (a,M)=1}} \sum_{ f \in \mathcal M_k}  \sum_{g \in \mathcal M_r}  \left| \sum_{\substack{ b \in \mathcal M_{d_B}\\ a^{-1} g + b \in A^\times}} \psi \left( \overline{ a f (a^{-1} g + b)} \right) \right| \right|^6  \ll \\
&q^{ 5  ( k+r + d_A) + \epsilon d(m) } \sum_{ b_1,\dots, b_3' \in \mathcal M_{d_B}  }  \left| \sum_{\substack{ h \in A\\ h+b_i \in A^\times \\  h+b_i' \in A^\times } } \sum_{\substack{ d(s) \leq  k+ d_A \\ (s,M)=1 } }  
\psi \left( \Delta_{\mathbf b} (h,s) \right) \right|  
\end{split}
\end{equation}
where $A =  \F_q[T]/(M)$, 
\begin{equation}
\mathbf b = (b_1, b_2, b_3, b_1', b_2', b_3') \in \F_q[T]^{6},
\end{equation}
and the function $\Delta_{\mathbf b} (h,s) $ is defined by
\begin{equation}
\Delta_{\mathbf b}(h,s) = \sum_{i=1}^3 \left[  \frac{ 1} { (h + b_i)s } - \frac{ 1}{  (h + b_i')s} \right].
\end{equation}

Because of our conditions $ h+ b_i \in A^\times ,  h + b_i' \in A^\times ,$ and $(s, M)=1$, 
the function $\Delta_{\mathbf b}$ is never evaluated where it is undefined. 
The H\"{o}lder's inequality argument is slightly more involved because the invertibility assumptions are more complex in case $M$ is not prime, but the idea is essentially the same.

As in \cite{FM98}, we complete the right hand side of \cref{FM46} and get
\begin{equation} \label{CompleteEq}
\begin{split}  
&\sum_{\substack{ h \in A\\  h + b_i \in A^\times \\ h+ b_i' \in A^\times} } \sum_{\substack{ d(s) \leq  k+ d_A \\ (s,M)=1 } }    \psi \left( \Delta_{\mathbf b} (h,s) \right) = \\
&q^{ k + d_A  - d(M)}  \sum_{ \substack { z \in \mathbb F_q[T] \\ d(z) < d(M) - k - d_A }} \sum_{\substack{ h \in A\\ h + b_i \in A^\times \\ h+ b_i' \in A^\times} } \sum_{ s \in A^\times  }   \psi \left( \Delta_{\mathbf b} (h,s) + zs \right).
\end{split}
\end{equation}  
In order to treat the innermost sum on the right hand side, we note that
\begin{equation} 
\Delta_{\mathbf b} (h,s)  +  zs = 
\frac{1}{s}\sum_{i=1}^3 \left[ \frac{1}{ (h+b_i )} - \frac{1}{ (h+b_{i'} )} \right] + zs 
\end{equation}
 so the aforementioned innermost sum over $s$ is a Kloosterman sum.
We put
\begin{equation}
A_{\bf b} = \left\{ x \in A :  (x+ b_1) \cdots (x+ b_3') \in A^\times \right\},
\end{equation} 
define a function $R_{\mathbf b} \colon A_{\mathbf b} \to A$ by 
\begin{equation}
R_{\mathbf b}(x) = \sum_{i=1}^3 \left( \frac{1}{x + b_i} - \frac{1}{x+ b_i'} \right),
\end{equation} 
and a Kloosterman sum
\begin{equation}
S(x,z) = \sum_{y \in A^\times} \psi \left(xy^{-1} + zy \right), \quad x \in A, \ z \in \F_q[T].
\end{equation}
In this notation, for any $z \in \F_q[T]$ we have
\begin{equation} \label{PassToSNot}
\sum_{h \in A_{\bf b} } \sum_{s \in A^\times} \psi \left( \Delta_{\mathbf b} (h,s)  +  zs \right) =
\sum_{x \in A_{\mathbf b}} S(R_{\mathbf b}(x), z) 
\end{equation} 
and the following claim.

\begin{prop} \label{CompleteType1}

For $\sigma \in S_3$ and
\begin{equation}
M_\sigma^{\mathbf b} = \gcd \left( M, b_1 - b_{\sigma(1)}',  b_2 - b_{\sigma(2)}', b_3 - b_{\sigma(3)}' \right)
\end{equation}
we have if $p>3$ the bound
\begin{equation}
\left| \sum_{x \in A_{\mathbf b}} S(R_{\mathbf b}(x), z) \right| \ll 
|M| d_2(M)^4 \left|\operatorname{lcm}_{\sigma \in S_3} \gcd( M_\sigma^{\mathbf b}, z) \right| 
\end{equation}
with the implied constant depending only on $q$. 

If $p=3$, with \begin{equation} M_{\triangle}^{\mathbf b} = \gcd \left ( M , b_1-b_2, b_2-b_3, b_1'-b_2', b_2'-b_3' \right) ,\end{equation} we have the bound \begin{equation}
\left| \sum_{x \in A_{\mathbf b}} S(R_{\mathbf b}(x), z) \right| \ll 
|M| d_2(M)^4 \left|\operatorname{lcm}_{\sigma \in S_3 \cup \{ \triangle \} } \gcd( M_\sigma^{\mathbf b}, z) \right| 
\end{equation}
with the implied constant depending only on $q$. 

\end{prop}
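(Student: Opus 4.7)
My plan is to reduce to a local estimate at each prime $P \mid M$ via the Chinese Remainder Theorem, and then apply a Deligne-type bound on a two-dimensional exponential sum, exactly in the style of \cite{FM98} and \cite{FKM14}. Since $M$ is squarefree, we have $A \cong \prod_{P \mid M} A_P$ with $A_P = \mathbb F_q[T]/(P)$, and the Kloosterman sum factors as
\begin{equation*}
S(R_{\mathbf b}(x),z) = \prod_{P \mid M} S_P(R_{\mathbf b}(x) \bmod P, \, z \bmod P),
\end{equation*}
while $A_{\mathbf b}$ splits as $\prod_P A_{\mathbf b,P}$. Thus the sum factorizes as $\prod_{P \mid M} \widetilde{\Sigma}_P$, where
\begin{equation*}
\widetilde{\Sigma}_P = \sum_{x \in A_{\mathbf b,P}} \sum_{y \in A_P^\times} \psi_P\bigl(R_{\mathbf b}(x)/y + zy\bigr),
\end{equation*}
and it suffices to bound each $\widetilde{\Sigma}_P$.

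For each prime $P$, I would view $\widetilde{\Sigma}_P$ as a two-dimensional exponential sum on $A_{\mathbf b,P}\times A_P^\times$ with phase $f(x,y) = R_{\mathbf b}(x)/y + zy$, so that $\widetilde{\Sigma}_P$ is the trace of Frobenius on compactly supported cohomology of the associated Artin--Schreier sheaf. Provided this sheaf satisfies a suitable non-degeneracy condition, Deligne's purity theorem yields $|\widetilde\Sigma_P| \leq C\,|P|$ for an absolute constant $C$; this is the direct analog of the estimate used in \cite{FM98} and adapted to function fields in \cite{FKM14}. The non-degeneracy boils down to checking that $R_{\mathbf b}$ is not identically zero as a rational function mod $P$ (and that the $y$-part $zy$ interacts non-trivially). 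An easy analysis of the partial fractions $R_{\mathbf b}(x) = \sum_i \frac{1}{x+b_i} - \sum_j \frac{1}{x+b_j'}$ shows that $R_{\mathbf b} \equiv 0$ identically precisely when the multisets $\{b_1,b_2,b_3\}$ and $\{b_1',b_2',b_3'\}$ coincide modulo $P$, i.e.\ when $P \mid M^{\mathbf b}_\sigma$ for some $\sigma \in S_3$; in characteristic $3$ there is the extra relation $\tfrac{3}{x+b}=0$, which yields an additional identical vanishing when $b_1\equiv b_2\equiv b_3$ and $b_1'\equiv b_2'\equiv b_3' \pmod P$, i.e.\ when $P \mid M^{\mathbf b}_\triangle$.

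In the degenerate case the inner Kloosterman sum collapses to $S_P(0,z)$, which equals $-1$ if $P\nmid z$ and $|P|-1$ if $P\mid z$. Combined with $|A_{\mathbf b,P}| \leq |P|$, this gives $|\widetilde\Sigma_P|\leq |P|$ when $P\nmid z$ and $|\widetilde\Sigma_P|\leq |P|^2$ when $P\mid z$. Consequently the extra factor of $|P|$ appears exactly for primes with $P \mid \gcd(M^{\mathbf b}_\sigma,z)$ for some $\sigma$ (or $P\mid \gcd(M^{\mathbf b}_\triangle,z)$ in characteristic $3$); the product of these $|P|$'s is precisely the $\operatorname{lcm}$ appearing in the statement. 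Multiplying the local bounds gives
\begin{equation*}
\Bigl|\sum_{x \in A_{\mathbf b}} S(R_{\mathbf b}(x),z)\Bigr| \leq \prod_{P\mid M} |\widetilde\Sigma_P| \leq C^{\omega(M)}\,|M|\,\Bigl|\operatorname{lcm}_\sigma \gcd(M^{\mathbf b}_\sigma,z)\Bigr|,
\end{equation*}
and the factor $C^{\omega(M)}$ is absorbed into $d_2(M)^4$.

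The main obstacle is verifying the non-degeneracy hypothesis precisely enough to apply Deligne's bound with the claimed constant. This requires checking that $R_{\mathbf b}(x)/y + zy$ does not take the form $g^p - g$ on any irreducible component of the relevant compactification, and controlling the Euler characteristic (and weight drops) of the cohomology uniformly in $\mathbf b$ and $z$; the characteristic $3$ case is the most delicate, since one must confirm that the identity $\tfrac{3}{x+b}=0$ is the only new source of identical vanishing and account for it in the enumeration of $M^{\mathbf b}_\triangle$. The remaining work consists of routine CRT bookkeeping and absorbing divisor-function overhead.
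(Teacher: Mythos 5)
Your overall strategy matches the paper's: reduce multiplicatively to prime moduli via CRT, identify the degenerate $\mathbf b$ for which $R_{\mathbf b}$ vanishes identically (the $S_3$ matchings, plus the $M_\triangle^{\mathbf b}$ case when $p=3$), and use a Deligne-type bound of size $O(|P|)$ in the non-degenerate case. Your identification of when $R_{\mathbf b}\equiv 0 \bmod P$ and of the local contributions ($|P|$ vs.\ $|P|^2$) is correct, and the bookkeeping leading to the $\operatorname{lcm}$ matches the statement.

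Where you diverge from the paper is in how you would prove the $O(|P|)$ bound. You propose treating $\widetilde{\Sigma}_P$ as a two-variable exponential sum with phase $R_{\mathbf b}(x)/y+zy$ on an open in $\mathbb A^2$. The paper avoids the surface cohomology entirely: having fixed $z\neq 0$, it first does the $y$-sum to produce Katz's Kloosterman sheaf $\mathcal K\ell_2$, and then identifies the $x$-sum as $-\sum_x \tr(\Frob,([zR_{\mathbf b}]^*\mathcal K\ell_2)_x)$, a \emph{one-variable} sum on the curve $\mathbb A^1\setminus\{-b_1,\dots,-b_3'\}$. Then $H^0_c$ and $H^2_c$ vanish because the geometric monodromy group of $\mathcal K\ell_2$ is the connected group $SL_2$, leaving only $H^1_c$, whose dimension is read off from the Euler characteristic and a Swan conductor bound at the (at most six) punctures, giving $\leq 16$. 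This is exactly the computation you correctly flag as ``the main obstacle'': it is not a routine non-degeneracy check for the surface but a concrete rank bound that the paper carries out, and the Kloosterman-sheaf pullback is what makes it feasible. Your two-dimensional route is not wrong in principle, but controlling $H^3_c$ and $H^4_c$ and the Euler characteristic of a surface uniformly in $\mathbf b, z$ would be considerably more work.

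One small gap in your case analysis: when $P\mid z$ but $R_{\mathbf b}\not\equiv 0\bmod P$, the phase collapses to $R_{\mathbf b}(x)/y$ and the Kloosterman sum degenerates to a Ramanujan-type sum, so the Deligne machinery you invoke doesn't directly apply. This case is still fine and elementary --- $S(R_{\mathbf b}(x),0)=-1$ unless $R_{\mathbf b}(x)=0$, which happens at $\leq 6$ points, giving $\leq 7|P|$ --- but it needs to be treated separately rather than swept into ``provided the sheaf is non-degenerate.'' The paper does handle this explicitly, and your write-up should too.
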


\begin{proof}

Since both the bound and the sum are multiplicative in $M$, it suffices to handle the case when $M$ is prime, where we show that 
\begin{equation}
\left| \sum_{x \in A \setminus \{ b_1,\dots, b_3'\} } S(R_{\mathbf b}(x), m) \right| \leq 16 |A| 
\end{equation}
unless $z=0$ and either
\begin{itemize}

\item for some $\sigma \in S_3$ we have $b_i = b_{\sigma(i)}'$ for all $1 \leq i \leq 3$;

\item or $p = 3$,  $b_1 = b_2 = b_3$, and $b_1' = b_2' = b_3'$;

\end{itemize}
in which case we have the trivial bound
\begin{equation}
\left| \sum_{x \in A \setminus \{ b_1,\dots, b_3'\} } S(R_{\mathbf b}(x), 0) \right| \leq |A|^2. 
\end{equation}

The relevance of these conditions is that the residue of the pole of $R_{\mathbf b}$ at a point $x$ equals 
\begin{equation}
\#\{1 \leq i \leq 3 : b_i = -x\} - \#\{1 \leq i \leq 3 : b_i' = -x\}
\end{equation}
so it is nonzero whenever these two numbers are not equal, except when $p=3$, one of these numbers is $3$, and the other is zero. 
Hence, $R_{\mathbf b}$ has a pole unless each $b_i$ is equal to some $b_{\sigma(i)'}$, or $p=3$, all the $b_i$ are equal, and the $b_i'$ are also all equal. 

Excluding these `trivial' values of $\mathbf b$ for $z= 0$, we get that the rational function $R_{\mathbf b}$ is nonconstant and at most $6$ to $1$.
Hence, if $R_{\mathbf b}(x) \neq 0$ we get $S(R_{\mathbf b}(x), 0) = -1$,
while for the values of $x$ with $R_{\mathbf b}(x) = 0$, at most $6$ in number, we have
\begin{equation}
S(0,0) = |A| - 1.
\end{equation}
In total, we get
\begin{equation}
\left| \sum_{x \in A \setminus \{ b_1,\dots, b_3'\} } S(R_{\mathbf b}(x), 0) \right| \leq |A| + 6|A| = 7|A|.
\end{equation}

Suppose from now on that $z \neq 0$.
Note that if the rational function $R_{\mathbf b}$ is constant then it necessarily vanishes identically, so we get
\begin{equation}
\left| \sum_{x \in A \setminus \{ b_1,\dots, b_3'\} } S(0, z) \right| \leq |A|.
\end{equation}
We can thus assume that $R_{\mathbf b}$ is nonconstant, and let $\mathcal K\ell_2$ be the Kloosterman sheaf defined by Katz.  
With this notation, our sum can be written as 
\begin{equation}
\begin{split}
&- \sum_{x \in A \setminus \{b_1,\dots, b_3'\}} \tr \left(\operatorname{Frob}_{|A|}, (\mathcal K \ell_2)_{ R_{\mathbf b}(x)  z} \right) = \\
&- \sum_{x \in A \setminus \{b_1,\dots, b_3'\}} \tr \left( \operatorname{Frob}_{|A|}, \left( [ z R_{\mathbf b} ]^* \mathcal K \ell_2 \right)_{ x} \right) = \\ 
&- \sum_{i=0}^{2} (-1)^i \tr \left( \Frob_{|A|}, H^i_c \left(\mathbb A^1_{\overline{\mathbb F_q}} \setminus \{- b_1,\dots, -b_3'\},  [z R_{\mathbf b}]^* \mathcal K\ell_2 \right) \right)
\end{split}
\end{equation}
by the Grothendieck-Lefschetz fixed point formula. Because the geometric monodromy group of $\mathcal K\ell_2$ is $SL_2$, which is connected, the geometric monodromy group of its pullback by any finite covering map is $SL_2$, which has no monodromy coinvariants, so the cohomology groups in degree $0$ and $2$ vanish. As $\mathcal K\ell_2$ is pure of weight $1$, its pullback by a finite covering map is mixed of weight at most $1$, so by Deligne's theorem, 
the eigenvalues of $\Frob_{|A|}$ on $H^1_c (\mathbb A^1_{\overline{\mathbb F_q}} \setminus \{- b_1,\dots, -b_3'\},  [z R_{\mathbf b}]^* \mathcal K\ell_2)$ have absolute value at most $|A|$.
Hence, in order to bound our sum by $16|A|$, it suffices to prove that the dimension of the above cohomology group is at most $16$.

By the aforementioned vanishing of cohomology in degrees $0$ and $2$, 
the dimension of our cohomology group equals minus the Euler characteristic. 
Since $ [z R_{\mathbf b}]^* \mathcal K\ell_2$ is lisse of rank $2$ on $\mathbb A^1_{\overline{\mathbb F_q}} \setminus \{- b_1,\dots, -b_3'\}$, its Euler characteristic is twice the Euler characteristic of $\mathbb A^1_{\overline{\mathbb F_q}} \setminus \{ -b_1,\dots, -b_3'\}$, which is 
\begin{equation}
2 \left( 1 - \# \{ -b_1,\dots, -b_3'\} \right),
\end{equation}
minus the sum of the Swan conductors at each singular point. Because the rational function $m R_{\mathbf b}$ has a zero at $\infty$ and a pole of order at most $1$ at each $b_i$ or $b_i'$, 
the Swan conductor of $ [m R_{\mathbf b}]^* \mathcal K\ell_2$ at $\infty$ vanishes and the Swan conductor of $ [z R_{\mathbf b}]^* \mathcal K\ell_2$ at $b_i$ or $b_i'$ is at most $1$, so the total Euler characteristic is at least 
\begin{equation}
2 - 3 \#\{ -b_1, \dots, -b_3' \} \geq 2 - 3 \cdot 6 = -16.
\end{equation}
\end{proof}

\begin{cor} \label{FifthDivCor}

Keeping the same notation, we have if $p>3$
\begin{equation*} 
\sum_{h \in A_{\bf b} } \sum_{\substack{ s \in A^\times \\  d(s) \leq  k + d_A}}  \psi \left( \Delta_{\mathbf b} (h,s) \right)
\ll d_2(M)^5  \left(  |M|+ q^{ \frac{3 (r+k)}{4}}  \left|\operatorname{lcm}_{\sigma \in S_3} M_{\sigma}^{\mathbf b} \right| \right).
\end{equation*} 
and, if $p=3$, the same bound but with $M_{\triangle}^{\mathbf b}$ also included in the $\operatorname{lcm}$.

\end{cor}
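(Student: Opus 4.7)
The plan is to combine the completion identity at the end of \eqref{CompleteEq} with the pointwise Kloosterman bound in \cref{CompleteType1}, and then finish with an elementary divisor-sum estimate. Starting from \eqref{CompleteEq}, the sum we need to bound equals
\begin{equation*}
q^{k+d_A-d(M)}\sum_{\substack{z\in\F_q[T]\\ d(z)<d(M)-k-d_A}}\sum_{h\in A_{\mathbf b}}\sum_{s\in A^\times}\psi\!\left(\Delta_{\mathbf b}(h,s)+zs\right),
\end{equation*}
and by \eqref{PassToSNot} the inner double sum is precisely $\sum_{x\in A_{\mathbf b}}S(R_{\mathbf b}(x),z)$. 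For each individual $z$ we then invoke \cref{CompleteType1}. Writing $L:=\operatorname{lcm}_\sigma M_\sigma^{\mathbf b}$, where $\sigma$ ranges over $S_3$ (or $S_3\cup\{\triangle\}$ if $p=3$), the pointwise bound reads $\ll |M|\,d_2(M)^4\,\bigl|\operatorname{lcm}_\sigma\gcd(M_\sigma^{\mathbf b},z)\bigr|$, so the two characteristic cases are handled uniformly from now on.

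Next I would record the valuation-theoretic identity
\begin{equation*}
\operatorname{lcm}_\sigma\gcd(M_\sigma^{\mathbf b},z)=\gcd(L,z),
\end{equation*}
which follows because $M$ is squarefree and hence, at every prime $P$ of $\F_q[T]$, one has $\max_\sigma\min(v_P(M_\sigma^{\mathbf b}),v_P(z))=\min(\max_\sigma v_P(M_\sigma^{\mathbf b}),v_P(z))$. This reduces the entire task to bounding the divisor-weighted sum
\begin{equation*}
\Sigma:=\sum_{d(z)<N}|\gcd(L,z)|,\qquad N:=d(M)-k-d_A.
\end{equation*}
Expanding $|\gcd(L,z)|=\sum_{D\mid\gcd(L,z)}\varphi(D)$ and swapping the order of summation gives
\begin{equation*}
\Sigma=\sum_{D\mid L}\varphi(D)\,\#\{z:d(z)<N,\ D\mid z\}\ll d_2(L)\bigl(q^N+|L|\bigr),
\end{equation*}
where I split the divisors $D$ of $L$ into those with $d(D)\le N$ (contributing at most $q^N$ each) and those with $d(D)>N$ (where only $z=0$ is allowed, contributing at most $|L|$ each). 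Finally, $L\mid M$ forces $d_2(L)\le d_2(M)$.

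Putting everything back together, the left-hand side of the corollary is
\begin{equation*}
\ll q^{k+d_A-d(M)}\,|M|\,d_2(M)^4\cdot d_2(M)\bigl(q^N+|L|\bigr)=d_2(M)^5\bigl(|M|+|M|\,q^{k+d_A-d(M)}|L|\bigr).
\end{equation*}
From \eqref{dadb} we have $k+d_A=k+(3r-k)/4=3(k+r)/4$, so the second term simplifies to $q^{3(k+r)/4}|L|$, matching the stated bound. The genuinely hard input has already been compressed into \cref{CompleteType1} via the Kloosterman sheaf argument; the rest of the proof is bookkeeping, and the only point requiring care is the exchange-of-$\max$-and-$\min$ identity together with the arithmetic check that $k+d_A$ matches $3(k+r)/4$.
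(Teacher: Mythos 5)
Your proof is correct and follows essentially the same route as the paper: both apply \cref{CompleteEq}, \eqref{PassToSNot}, and \cref{CompleteType1} to reduce to a divisor-weighted sum over $z$, then bound it by splitting according to the degree of the relevant divisor of $M$. The only cosmetic difference is that you first pass through the (correct) identity $\operatorname{lcm}_\sigma\gcd(M_\sigma^{\mathbf b},z)=\gcd\bigl(\operatorname{lcm}_\sigma M_\sigma^{\mathbf b},z\bigr)$ and the convolution $|\gcd(L,z)|=\sum_{D\mid\gcd(L,z)}\varphi(D)$, whereas the paper groups the $z$ by the value of $\operatorname{lcm}_\sigma\gcd(M_\sigma^{\mathbf b},z)$ and uses that this value must divide $z$; the two bookkeeping schemes yield the identical bound $d_2(M)\bigl(q^N+|\operatorname{lcm}_\sigma M_\sigma^{\mathbf b}|\bigr)$ for the inner sum.
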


\begin{proof}

Using \cref{CompleteEq}, \cref{PassToSNot}, and \cref{CompleteType1} we get a bound of 
\begin{equation}
\ll q^{k + d_A} d_2(M)^4 \sum_{ \substack { z \in \mathbb F_q[T] \\ d(z) < d(M) - k - d_A  }} \left| \lcm_{\sigma \in S_3} \gcd 
\left( M_\sigma^{\mathbf b}, z \right) \right|
\end{equation}
for the left hand side above.
Summing over the possible values of the least common multiple, we get
\begin{equation}
\begin{split}
&\ll q^{\frac{3 (r+k)}{4}} d_2(M)^4 \sum_{L \mid \lcm_{\sigma \in S_3} M_\sigma^{\mathbf b}} |L| \sum_{\substack{z \in \F_q[T] \\ d(z)<   d(M) - \frac{3 (r+k)}{4} \\ L \mid   z}}  1 \\
&\ll q^{\frac{3 (r+k)}{4}} d_2(M)^4 \sum_{{L \mid \lcm_{\sigma \in S_3} M_\sigma^{\mathbf b}}} |L| \max \left\{  q^{  d(M)- \frac{3 (r+k)}{4} - d(L)} , 1 \right\}.
\end{split}
\end{equation}
The contribution of $q^{d(M) - \frac{3 (r+k)}{4} - d(L)}$ (respectively, of $1$) is the first (respectively, the second) summand of the right hand side in our corollary.
\end{proof}

\begin{cor}

Notation unchanged, we have
\begin{equation} \label{ElevBound}
\sum_{ b_1,\dots, b_3' \in \mathcal M_{d_B}  }  
\left| \sum_{h \in A_{\bf b}} \sum_{\substack{ s \in A^\times \\ d(s) \leq  k + d_A} } \psi \left( \Delta_{\mathbf b} (h,s) \right) \right| \ll  |M| d_2(M)^{12}   q^{ \frac{6}{4} ( k+r)}
\end{equation} 

\end{cor}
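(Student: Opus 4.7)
My plan is to integrate the pointwise estimate of \cref{FifthDivCor} over $\mathbf b = (b_1,\dots,b_3') \in \mathcal M_{d_B}^6$. That corollary reads
\begin{equation*}
\Bigl|\sum_{h,s}\psi\bigl(\Delta_{\mathbf b}(h,s)\bigr)\Bigr| \ll d_2(M)^5\bigl(|M| + q^{3(k+r)/4}\,|U_{\mathbf b}|\bigr),
\end{equation*}
where $U_{\mathbf b} := \lcm_{\sigma} M_\sigma^{\mathbf b}$ (with the extra $\triangle$-term adjoined in characteristic $3$). Summing the constant-in-$\mathbf b$ first term over the $q^{6 d_B}$ choices of $\mathbf b$ yields $|M|q^{6 d_B}$, which is $\leq |M|q^{6(k+r)/4}$ because $d_B = (k+r)/4-1$. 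So the crux is to estimate $S := \sum_{\mathbf b} |U_{\mathbf b}|$, with the goal of obtaining essentially $S \ll q^{6 d_B}$.

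For this I would use the squarefree-divisor identity $|U_{\mathbf b}| = \sum_{L \mid U_{\mathbf b}} \varphi(L)$ (valid since $U_{\mathbf b}$ divides the squarefree $M$), interchange to $S = \sum_{L \mid M} \varphi(L)\,N(L)$ with $N(L) := \#\{\mathbf b : L \mid U_{\mathbf b}\}$, and bound $N(L)$ prime-by-prime. The event $L \mid U_{\mathbf b}$ forces, for each prime $P \mid L$, the existence of a permutation $\sigma_P \in S_3$ (or $S_3 \cup \{\triangle\}$ when $p=3$) with $P \mid b_j - b_{\sigma_P(j)}'$ for $j=1,2,3$; a union bound over the at most $7^{\omega(L)}$ choices of the $\sigma_P$'s, followed by CRT, yields three linear constraints on $\mathbf b$ modulo $L$, giving $N(L) \ll 7^{\omega(L)} q^{6 d_B}/|L|^3$ as long as $|L| \leq q^{d_B}$. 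In the complementary range $|L| > q^{d_B}$ the congruence $L \mid b_j - b_{\sigma_P(j)}'$ among monic polynomials of degree $d_B$ forces the literal identity $b_j = b_{\sigma_P(j)}'$, producing the rigidity-based fallback $N(L) \ll 7^{\omega(L)} q^{3 d_B}$ uniformly in $L$.

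Assembling, the small-$L$ contribution to $S$ is $\ll q^{6 d_B} \prod_{P\mid M}(1 + 7/|P|^2) \ll q^{6 d_B}$ by convergence of the Euler product, while the large-$L$ contribution is $\ll 7^{\omega(M)}|M|q^{3 d_B} \ll d_2(M)^{O(1)}|M|q^{3 d_B}$. Multiplying $S$ by $q^{3(k+r)/4}$ and using $3 d_B + 3(k+r)/4 \leq 6(k+r)/4$, the large-$L$ piece absorbs into $|M|d_2(M)^{O(1)}q^{6(k+r)/4}$; the small-$L$ piece produces $q^{9(k+r)/4-6}$, which is $\leq |M|q^{6(k+r)/4}$ precisely when $|M| \geq q^{3(k+r)/4}$, the regime in which the proof of \cref{FifthDivCor} via completion is valid. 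In the complementary regime $|M| < q^{3(k+r)/4}$, the completion step of \eqref{CompleteEq} becomes trivial and \cref{CompleteType1} applied directly with $z = 0$ supplies the alternative pointwise bound $|\sum_{h,s}\psi| \ll |M|d_2(M)^4|U_{\mathbf b}|$, from which the same divisor analysis closes the proof. The main technical obstacle is this case split together with the rigidity bound for large divisors of $M$, which is what ultimately forces the generous factor $d_2(M)^{12}$ in the stated inequality.
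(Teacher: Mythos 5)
Your argument is correct, and it takes a genuinely different route from the paper's. The paper sums over tuples of exact values $(M_\sigma)_{\sigma \in S_3}$ of divisors of $M$, bounds for each tuple the number of $\mathbf b$ realizing those exact values by $\max\{q^{6d_B}|\lcm M_\sigma|^{-3}, q^{3d_B}\}$, sets $\tau = d(\lcm M_\sigma)$, and observes the resulting expression is linear in $\tau$ so that only the endpoints $\tau = 0$ and $\tau = d(M)$ matter; the factor $d_2(M)^{|S_3|+1}$ arises from the number of tuples. You instead expand $|U_{\mathbf b}| = \sum_{L \mid U_{\mathbf b}} \varphi(L)$, swap the order of summation, and estimate $N(L) = \#\{\mathbf b : L \mid U_{\mathbf b}\}$ directly via a prime-by-prime union bound over which $\sigma_P$ is responsible at each $P \mid L$. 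The small-$L$ contribution then closes up into a convergent Euler product $\prod_P(1 + 7/|P|^2)$, which is tidier than the paper's $\tau$-endpoint analysis. Your approach also yields the slightly sharper exponent $d_2(M)^8$ (as $7^{\omega(M)} \ll d_2(M)^3$) rather than the paper's $d_2(M)^{12}$, though of course both suffice for the stated bound. The one place you gloss over is the $p=3$ constraint for $\sigma_P = \triangle$, which is really four linear conditions (forcing $b_1 \equiv b_2 \equiv b_3$ and $b_1' \equiv b_2' \equiv b_3'$ mod $P$) rather than three, but that only tightens the bound on $N(L)$ so nothing is lost.

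One small remark: the case split at $|M| \geq q^{3(k+r)/4}$ is superfluous. By the type-I constraints $k + r \leq d$ and the hypothesis $d \leq d(M)$ of \cref{FKMThm}, one always has $3(k+r)/4 \leq 3d(M)/4 < d(M)$, so the inequality $|M| \geq q^{3(k+r)/4 - 6}$ that you need for absorbing the small-$L$ piece holds unconditionally in the relevant range of parameters, and the fallback argument via direct application of \cref{CompleteType1} at $z = 0$ is never invoked. This does not affect correctness — the fallback would work if needed — but the case analysis can simply be dropped.
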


\begin{proof}

By \cref{FifthDivCor} we have a bound of
\begin{equation}
\ll \sum_{ b_1,\dots, b_3' \in \mathcal M_{d_B}  } d_2(M)^5  \left(  |M|+ q^{ \frac{3 (r+k)}{4}}  \left| \operatorname{lcm}_{\sigma \in S_3} M_{\sigma}^{\mathbf b} \right| \right).
\end{equation}
Summing first over tuples $M_\sigma$ of divisors of $M$ we get
\begin{equation}
\sum_{\sigma \in S_3} \sum_{M_\sigma \mid M} \sum_{ \substack{b_1,\dots, b_3' \in \mathcal M_{d_B}  \\ \forall \sigma \ M_\sigma = M_\sigma^{\mathbf b}}} d_2(M)^5  \left(  |M|+ q^{ \frac{3 (r+k)}{4}}  \left| \operatorname{lcm}_{\sigma \in S_3} M_{\sigma} \right| \right).
\end{equation}
For each such tuple, the conditions
\begin{equation}
M_\sigma = M_\sigma^{\mathbf b}, \quad \sigma \in S_3
\end{equation} 
imply the congruences 
\begin{equation}
b_i \equiv b_{\sigma(i)}' \ \mathrm{mod} \ M_\sigma, \quad 1 \leq i \leq 3, \ \sigma \in S_3
\end{equation}
which determine $b_1', b_2', b_3'$ mod $\lcm_{\sigma \in S_3} M_{\sigma}$ once $b_1, b_2, b_3$ are chosen.
Hence, for each tuple of divisors of $M$,
the number of possible values of $\mathbf b$ is at most
\begin{equation}
\max \left\{ q^{ 6d_B} \left| \lcm_{\sigma \in S_3} M_{\sigma} \right|^{-3}, q^{3d_B} \right\}.
\end{equation}

If $p=3$, we need a slightly more complicated argument. 
There are at most $M_{\triangle}^2$ ways to choose the congruence classes of $\mathbf b$ modulo $M_{\triangle}$, and then choosing $b_1,b_2,b_3$ arbitrarily now determines  $b_1', b_2', b_3' $ mod $\lcm_{\sigma \in S_3 \cup \{ \triangle \} } M_{\sigma}$. 
Because the number of ways to choose $\mathbf b$ modulo $M_{\sigma}$ and then choose $b_1,b_2,b_3$ is 
\begin{equation} 
\begin{cases}  
M_\triangle^2 ( q^{ 3d_B} / M_\triangle^3) \leq q^{ 3 d_B} & \textrm{if } M_\triangle \leq q^{d_B} \\  q^{2 d_B } \leq q^{3 d_B} &\ \textrm {if } M_\triangle > q^{d_B}
\end{cases}
\end{equation} 
in either case the number of possible values of $\mathbf b$ is at most

\begin{equation}
\max \left\{ q^{ 6d_B} \left| \lcm_{\sigma \in S_3 \cup \{\triangle\} } M_{\sigma} \right|^{-3}, q^{3d_B} \right\}.
\end{equation}

Setting $\tau = d(\lcm_{\sigma \in S_3} M_\sigma)$ (or adding $\triangle$ if $p=3$), and taking the maximal possible contribution for every tuple of divisors of $M$, 
we get the bound
\begin{equation}
\max_{0 \leq \tau \leq d(M)} d_2(M)^{|S_3|+1} \left( q^{ 6d_B -3\tau} + q^{3d_B} \right) d_2(M)^5
\left(  |M|+ q^{ \frac{3 (r+k)}{4} + \tau} \right).
\end{equation}
Expanding the brackets above,
we see that each exponent is a linear function of $\tau$, 
hence maximized either at $\tau=0$ or at $\tau = d(M)$.
Using \cref{dadb}, one observes that the maximal terms $q^{ 6d_B + d(M)} $ and $q^{ 3 d_b + \frac{3 (r+k)}{4} + d(M) }$ agree and arrives at the right hand side of \cref{ElevBound}.
\end{proof}

It then follows from \cref{FM46}, \cref{ElevBound}, and the divisor bound that
\begin{equation*} 
\left| \sum_{ \substack{ a \in \mathcal M_{d_A} \\ (a,M)=1}} \sum_{ f \in \mathcal M_k}  \sum_{g \in \mathcal M_r}  \left| 
\sum_{b \in \mathcal M_{d_B}} \psi \left( a f (a^{-1} g + b) \right) \right| \right| \ll q^{ \frac{41}{24} r + \frac{21}{24} k  + ( \frac{1}{6} + \epsilon) d(M) } 
\end{equation*}
and thus (matching the $\ell=3$ case of \cite[(1.2.3)]{FM98} we get
\begin{equation} 
\Sigma^{(\text{I})}_{k,r} \ll q^{ \frac{17}{24} r + \frac{7}{8} k  + ( \frac{1}{6} + \epsilon) d(M) } 
\end{equation}
using \cref{FirstBoundTypeOne}.
Using the fact that $k \leq \frac{1}{8}d(M) + \frac{7}{16}d$ and $k + r \leq d$ we arrive at the first bound in \cref{appendix-required-bound}.

\subsection{Sums of type II}

Keeping the same notation, 
we follow the proof of \cite[Theorem 1.17]{FKM14}.
Applying Cauchy's inequality and Polya-Vinogradov completion as in \cite[Section 3]{FKM14},
we get
\begin{equation} \label{type-II-incomplete} 
\begin{split}
\left| \Sigma^{(\text{II})}_{k,r} \right|^2 &\leq q^k \sum_{g_1,g_2 \in {\mathcal{M}_r}} \overline{ \delta_{g_1}} \delta_{g_2}  \sum_{\substack{ f \in \mathcal{M}_k \\  {(fg_1,M) = (fg_2, M) = 1} }}\psi(\overline{fg_1} - \overline{f g_2} ) \\ 
&\leq q^k  \sum_{g_1,g_2 \in {\mathcal{M}_r}}  
q^{ k - d(M)}  \sum_{ \substack{ h \in \mathbb F_q[T] \\ d(h) < d(M) - k }} \mathcal C( \overline{g_1}- \overline{g_2}, h ). 
\end{split}
\end{equation} 
where
\begin{equation}
\mathcal{C}(g,h) = \sum_{z \in A^\times}   \psi(g \overline{ z})
e_p\left(\mathrm{Tr}^A_{\F_p}(hz) \right), \quad g,h \in \F_q[T].
\end{equation}

We have the following analog of \cite[Proposition 3.1]{FKM14}.

\begin{prop} \label{type-II-complete}

For $g,h \in \F_q[T]$ and $M_{g,h} = \gcd(M,g,h)$ we have
\begin{equation}
\left| \mathcal{C}(g, h) \right| \leq d_2(M) \sqrt{|M|} \sqrt{|M_{g,h} |}.
\end{equation}

\end{prop}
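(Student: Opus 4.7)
The plan is to factor $\mathcal{C}(g,h)$ into a product of classical Kloosterman sums over the residue fields $\F_q[T]/(P)$ via the Chinese Remainder Theorem, and then apply Deligne's (equivalently Weil's) square-root bound on each factor.

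Since $M$ is squarefree, factor $M = \prod_i P_i$ into distinct irreducibles and set $A_i = \F_q[T]/(P_i)$, a finite field of order $|P_i|$, so CRT gives $A \cong \prod_i A_i$. Under this isomorphism $z \in A^\times$ corresponds to a tuple $(z_i)$ with $z_i \in A_i^\times$, and $\overline{z}$ to $(z_i^{-1})$. Both characters decompose multiplicatively: $\mathrm{Tr}^A_{\F_p} = \sum_i \mathrm{Tr}^{A_i}_{\F_p}$, and writing $\psi(x) = e_p(\mathrm{Tr}^A_{\F_p}(\alpha x))$ for a dual element $\alpha \in A$, we get $\psi = \prod_i \psi_i$ with $\psi_i(x_i) = e_p(\mathrm{Tr}^{A_i}_{\F_p}(\alpha_i x_i))$. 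Hence
\begin{equation}
\mathcal{C}(g,h) = \prod_i \mathcal{C}_i(g_i,h_i), \quad \mathcal{C}_i(g_i,h_i) = \sum_{z_i \in A_i^\times} e_p \bigl( \mathrm{Tr}^{A_i}_{\F_p}(\alpha_i g_i z_i^{-1} + h_i z_i) \bigr),
\end{equation}
where subscripts denote reduction modulo $P_i$.

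Each $\mathcal{C}_i$ is a classical Kloosterman sum on $A_i$. Deligne's theorem on the Kloosterman sheaf (equivalently Weil's original bound) gives $|\mathcal{C}_i| \leq 2\sqrt{|P_i|}$ whenever both $\alpha_i g_i$ and $h_i$ are nonzero in $A_i$. In the degenerate cases, direct computation yields $|\mathcal{C}_i| \leq 1$ when exactly one of $\alpha_i g_i$ or $h_i$ vanishes (the sum becomes a nontrivial additive character summed over $A_i^\times$, hence equals $-1$), and $|\mathcal{C}_i| = |P_i| - 1 \leq |P_i|$ when both vanish. Assuming $\psi$ is primitive modulo $M$, so that $\alpha \in A^\times$ and $\alpha_i g_i = 0 \iff P_i \mid g$, the primes $P_i$ producing the large local factor $|P_i|$ are exactly those dividing both $g$ and $h$, namely the primes dividing $M_{g,h}$. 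Multiplying,
\begin{equation}
|\mathcal{C}(g,h)| \leq \prod_{P_i \mid M_{g,h}} |P_i| \cdot \prod_{\substack{P_i \mid M \\ P_i \nmid M_{g,h}}} 2\sqrt{|P_i|} \leq 2^{\omega(M)} \sqrt{|M|} \sqrt{|M_{g,h}|} = d_2(M) \sqrt{|M|} \sqrt{|M_{g,h}|},
\end{equation}
using $d_2(M) = 2^{\omega(M)}$ for squarefree $M$.

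The main obstacle is the local case analysis, in particular handling primes $P_i$ at which $\psi$ fails to be primitive. This case is reduced to the primitive one by factoring out the trivial local components: replacing $M$ by the conductor $M^*$ of $\psi$ and treating the remaining primes by evaluating their local sums as Ramanujan-type sums. The resulting extra contributions are controlled and absorbed into the divisor factor $d_2(M)$ in the final bound.
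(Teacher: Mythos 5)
Your core argument is essentially identical to the paper's: reduce by multiplicativity (CRT) to the case $M=P$ prime, write the local sum as a Kloosterman sum, and apply the Weil/Deligne square-root bound in the nondegenerate case while computing the degenerate cases directly. The paper packages the cases slightly differently — it invokes the Weil bound for Artin--Schreier sums and obtains $2\sqrt{|P|}$ in all cases except $g\equiv h\equiv 0\pmod P$, folding your ``exactly one vanishes'' case into the same bound — but that is a cosmetic difference.

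One caution about your final paragraph. Your claim that imprimitivity of $\psi$ can be ``controlled and absorbed into the divisor factor $d_2(M)$'' does not go through as stated: if $P_i\nmid M^*$ (local component of $\psi$ trivial) and $P_i\mid h$ but $P_i\nmid g$, the local factor is $|P_i|-1$, while $P_i\nmid M_{g,h}$, so neither $\sqrt{|M_{g,h}|}$ nor the bounded factor $d_2(M)=2^{\omega(M)}$ can absorb it. The bound in the proposition genuinely requires $\psi$ primitive (which is what the paper's proof also implicitly uses, via $f\neq 0$ in $\F_q[T]/(P)$), and this is the situation in the application, where $\psi$ plays the role of $e(a\cdot/q)$ with $(a,q)=1$ in Fouvry--Michel and Fouvry--Kowalski--Michel. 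So rather than attempting to repair the statement for imprimitive $\psi$, it is cleaner to record the primitivity hypothesis explicitly.
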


\begin{proof}

Since both $\mathcal{C}(g,h)$ and our putative bound are multiplicative in $M$, 
it suffices to show that for a prime $P$ we have
\begin{equation}
\left| \mathcal{C}(g, h) \right| \leq 2 \sqrt{|P|}
\end{equation}
 unless $g \equiv h \equiv 0 \ \mathrm{mod} \ P$. 
To demonstrate that, take $f \in \F_q[T]/(P)$ with
\begin{equation}
\psi (z) = e_p \left(  \mathrm{Tr}^{\mathbb F_q[T]/(P)}_{\F_p}(fz) \right)
\end{equation}
and note that
\begin{equation}
\mathcal{C}(g,h) =  
\sum_{z \in \left(\F_q[T]/(P)\right)^\times} e_p \left(  \mathrm{Tr}^{\mathbb F_q[T]/(P)}_{\F_p} ( gf z^{-1}  + hz ) \right).
\end{equation}
We have a Weil bound of $2 \sqrt{ |P|}$ for this exponential sum unless the rational function $gfz^{-1}  + hz$ is an Artin-Schreier polynomial, which can only happen if it is constant, as all its poles have order at most $1$.
The latter happens only if $g=h=0$, as desired.
\end{proof}

By \cref{type-II-incomplete} we have
\begin{equation}
\left| \Sigma^{(\text{II})}_{k,r} \right|^2 \leq q^{2k - d(M)} \sum_{L \mid M}  
\sum_{ \substack{ h \in \mathbb F_q[T] \\ d(h) < d(M) - k }} 
\sum_{\substack{g_1,g_2 \in {\mathcal{M}_r} \\ M_{\overline g_1 - \overline g_2,h} = L }}  
\mathcal C( \overline{g_1}- \overline{g_2}, h )
\end{equation}
so from \cref{type-II-complete} and the divisor bound, this is at most
\begin{equation}
q^{2k - \frac{d(M)}{2} + \epsilon d(M)} \sum_{L \mid M} q^{\frac{d(L)}{2}} 
\sum_{ \substack{ h \in \mathbb F_q[T] \\ d(h) < d(M) - k }} 
\sum_{\substack{g_1,g_2 \in {\mathcal{M}_r} \\ M_{g_1 - g_2,h} = L }} 1.
\end{equation}
Since $M_{g_1 - g_2, h} = L$ implies that $g_2 \equiv g_1, \ h \equiv 0 \ \mathrm{mod} \ L$, the above is at most
\begin{equation}
q^{2k - \frac{d(M)}{2} + \epsilon d(M)} \sum_{L \mid M}
q^{\frac{d(L)}{2} + r+ \max ( r - d(L),0) + \max( d(M) -k - d(L),0) }
\end{equation}
so setting $\xi = d(L)$ and applying the divisor bound once again, we arrive at
\begin{equation}
\max_{0 \leq \xi \leq d(M)}
q^{2k + \frac{\xi}{2} + r+ \max \left\{ r - \xi,0 \right\} + \max \left\{ d(M) -k - \xi,0 \right\} - \frac{d(M)}{2} + \epsilon d(M)}.
\end{equation}

As a function of $\xi$, the exponent above is convex, 
so its values do not exceed those at $\xi =0$ and $\xi = d(M)$, which are 
\begin{equation}
q^{ \frac{ d(M)}{2} + \epsilon d(M) + 2r +k }, \quad q^{ r+ 2k + \epsilon d(M) }
\end{equation}
in view of our assumption that $r \leq d \leq d(M)$. 
Taking a square root and using the fact that $k + r \leq d$ we get the second bound in \eqref{appendix-required-bound}.


\begin{remark}
There are (at least) two other potential approaches for bounding our type II sums.
The first is to follow the proof of \cite[Proposition 1.3]{FM98} that uses Bombieri's bound on complete exponential sums \cite[Lemma 4.3]{FM98}.
One then argues as in \cite[Section 5]{FM98}.
The second is to follow the proof of \cite[Theorem 1.4]{FM98} given in \cite[Section 7]{FM98}.

\end{remark}

%
%

\end{document}